\newcommand{\lemlab}[1]{\label{lemma:#1}}
\newcommand{\theolab}[1]{\label{theo:#1}}
\newcommand{\eqlab}[1]{\label{eq:#1}}
\newcommand{\corlab}[1]{\label{cor:#1}}
\newcommand{\seclab}[1]{\label{section:#1}}
\newcommand{\lemref}[1]{Lemma \ref{lemma:#1}}
\newcommand{\theoref}[1]{Theorem \ref{theo:#1}}
\newcommand{\corref}[1]{Corollary \ref{cor:#1}}
\newcommand{\figref}[1]{Figure \ref{fig:#1}}
\renewcommand{\eqref}[1]{(\ref{eq:#1})}
\newcommand{\secref}[1]{Section \ref{section:#1}}
\newtheorem{lemma}{Lemma}
\newtheorem{cor}[lemma]{Corollary}
\def\lemmaD#1{
\begin{lemma}
\label{lem:#1}
}
\newcommand{\theoremD}[1]{
\begin{theorem}
\label{theorem:#1}
}
\newcommand{\factD}[1]{
\begin{fact}
\label{fact:#1}
}
\newcommand{\corD}[1]{
\begin{cor}
\label{cor:#1}
}
\newcommand{\card}[1]{\ensuremath{\left\vert #1 \right\vert}}
\renewcommand{\vec}[1]{\ensuremath{\mathbf{#1}}}
\newcommand{\iprod}[2]{\left\langle {#1},{#2}\right\rangle}
\renewcommand{\Re}{\mathbb{R}}
\def\paragraph{\@startsection{paragraph}{4}{\z@}{-13pt plus-8pt minus-4pt}{\z@}{\normalsize\bf}}
\def\subsection{\@startsection{subsection}{2}{\z@}%
{-21dd plus-8pt minus-4pt}{10.5dd}
{\normalsize\upshape\bf}}
\def\subsubsection{\@startsection{subsubsection}{3}{\z@}%
{-13dd plus-8pt minus-4pt}{10.5dd}
{\normalsize\upshape\bf}}
\journalname{Discrete and Computational Geometry}
\title{Slider-pinning Rigidity: a Maxwell-Laman-type Theorem}
\author{Ileana Streinu \and Louis Theran}
\institute{Ileana Streinu \at
Computer Science Department, Smith College, Northampton, MA \\
\email{istreinu@smith.edu,streinu@cs.smith.edu} \and
Louis Theran \at
Mathematics Department, Temple University, Philadelphia, PA \\
\email{theran@temple.edu}}
\date{Received: date / Accepted: date}
\begin{document}
\maketitle

\begin{abstract}
We  define and study slider-pinning rigidity,
giving a complete combinatorial characterization.  This is
done via direction-slider networks, which are a generalization of
Whiteley's direction networks.
\end{abstract}

\section{Introduction}
A planar \emph{bar-and-joint framework} is a planar structure made of fixed-length bars connected by
universal joints with full rotational degrees of freedom.
The allowed continuous motions preserve the lengths and connectivity of the bars.  Formally, a bar-and-joint
framework is modeled as a pair $(G,\bm{\ell})$, where $G=(V,E)$ is a simple graph with $n$ vertices and $m$
edges, and $\bm{\ell}$ is a vector of positive numbers that are interpreted as squared edge lengths.

A realization $G(\vec p)$ of a bar-and-joint framework is a mapping of the vertices of $G$ onto
a point set $\vec p\in (\Re^2)^n$ such that $||\vec p_i-\vec p_j||^2=\bm{\ell}_{ij}$ for every
edge $ij\in E$.  The realized framework $G(\vec p)$ is rigid if the only motions are trivial
rigid motions; equivalently, $\vec p$ is an isolated (real) solution to the equations
giving the edge lengths, modulo rigid motions.  A framework $G(\vec p)$ is \emph{minimally rigid} if it is rigid, but ceases to be so if any bar is removed.

\paragraph{The Slider-pinning Problem.}
In this paper, we introduce an elaboration of planar bar-joint rigidity to include
\emph{sliders}, which constrain some of the vertices of a framework to move on  given lines.  We define the combinatorial model for a bar-slider framework to be a graph $G=(V,E)$ that has edges (to represent the bars) and also self-loops (that represent the sliders).

A realization of a bar-slider framework $G(\vec p)$ is a mapping of the vertices of $G$ onto a point set that is compatible with the given edge lengths, with the additional requirement that if a vertex is on a slider, then it is mapped to a point on the slider's line.  A bar-slider framework $G(\vec p)$ is \emph{slider-pinning rigid} (shortly \emph{pinned}) if it is completely immobilized.  It is minimally pinned if it is pinned and ceases to be so when any bar or slider is removed.  (Full definitions are given in \secref{rigidity}).

\paragraph{Historical note on pinning frameworks.}
The topic of immobilizing bar-joint frameworks has been considered before.
Lovász \cite{lovas:1980:matroidmatching} and, more recently, Fekete \cite{fekete:thumbtacking}
studied the related problem of pinning a bar-joint frameworks by a minimum number of
\emph{thumbtacks}, which completely immobilize a vertex.  Thumbtack-pinning
induces a different (and non-matroidal) graph-theoretic structure than slider-pinning.
In terms of slider-pinning, the minimum thumbtack-pinning problem asks for a slider-pinning
with sliders on the minimum number of distinct vertices.  Recski \cite{recski:matroidTheory:1989a}
also previously considered the specific case of vertical sliders, which he called tracks.

We give, for the first time, a
\emph{complete combinatorial characterization} of planar slider-pinning in the most general setting.
Previous work on the problem is concerned either with thumbtacks (Fekete \cite{fekete:thumbtacking})
or only with the algebraic setting (Lovász \cite{lovas:1980:matroidmatching}, Recski \cite{recski:matroidTheory:1989a}).

On the algorithmic side, we \cite{sliders} have previously developed algorithms for generic rigidity-theoretic questions on bar-slider frameworks.  The theory developed in this paper provides the theoretical foundation for their correctness.

\paragraph{Generic combinatorial rigidity.}
The purely geometric question of deciding rigidity of a framework seems to be computationally intractable, even for small, fixed dimension $d$.  The best-known algorithms rely on exponential time Gröbner basis techniques, and specific cases are known to be NP-complete \cite{saxe:embeddability:1979}.  However, for \emph{generic} frameworks in the plane, the following landmark theorem due to Maxwell and Laman states that rigidity has a combinatorial characterization, for which several efficient algorithms are known (see \cite{pebblegame} for a discussion of the algorithmic aspects of rigidity).  The Laman graphs and looped-Laman graphs appearing in the statements of results are combinatorial (not geometric) graphs with special sparsity properties. The technical definitions are given in \secref{sparse}.

\begin{theorem}[\laman][\textbf{Maxwell-Laman Theorem: Generic bar-joint rigidity \cite{laman,MaxwellEquil1864}}]\theolab{laman}
Let $(G,\bm \ell)$ be a generic abstract bar-joint framework.  Then $(G,\bm \ell)$
is minimally rigid if and only if $G$ is a Laman graph.
\end{theorem}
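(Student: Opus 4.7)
The plan is to prove \theoref{laman} by passing to the \emph{rigidity matrix}, reducing rigidity to a rank condition, and then verifying that condition for Laman graphs by induction via the Henneberg construction.

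\textbf{Step 1 (Rigidity matrix).} I introduce the $m \times 2n$ rigidity matrix $R(G,\vec p)$ whose row for edge $ij$ has $\vec p_i - \vec p_j$ in the columns indexed by vertex $i$ and $\vec p_j - \vec p_i$ in the columns indexed by $j$. Differentiating the length equations $\|\vec p_i - \vec p_j\|^2 = \bm\ell_{ij}$ identifies $\ker R$ with the infinitesimal motions of the framework. The three trivial motions (two translations and one infinitesimal rotation) always lie in $\ker R$, so $\operatorname{rank} R \le 2n-3$, with equality characterizing infinitesimal rigidity. Because rank is lower semicontinuous in $\vec p$ and attains its maximum on a Zariski-open set, for generic $\vec p$ the three conditions---rigidity, infinitesimal rigidity, and $\operatorname{rank} R = 2n-3$---are equivalent.

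\textbf{Step 2 (Necessity).} If $(G,\bm\ell)$ is generically minimally rigid then $R$ has $m = 2n-3$ linearly independent rows. For any $V'\subseteq V$ with induced edges $E'$, restricting $R$ to the columns indexed by $V'$ and the rows indexed by $E'$ is itself a rigidity matrix (for the induced subframework) and therefore has rank at most $2|V'|-3$. The restricted rows remain independent, so $|E'|\le 2|V'|-3$, and $G$ is Laman.

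\textbf{Step 3 (Sufficiency).} I invoke Henneberg's theorem: every Laman graph can be obtained from a single edge by a sequence of type-I moves (adding a new vertex of degree $2$) and type-II moves (deleting an edge $uv$ and adding a new vertex adjacent to $u$, $v$, and some third vertex $w$). The base case of a single edge on two vertices is trivially minimally rigid; a type-I move that places the new vertex off the line through its two neighbors adds two linearly independent rows to $R$ and preserves the maximal rank count. The main obstacle is the type-II move, where the deletion of $uv$ removes one independent row that must be compensated by three new rows providing a net rank gain of $2$. The cleanest argument places the new vertex on the line through $\vec p_u$ and $\vec p_v$: the three new rows then span the direction previously contributed by $uv$ together with two additional directions, giving the required rank. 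Because maximal rank of $R$ is a Zariski-open condition on $\vec p$, the existence of one good realization at each inductive step promotes to genericity via semicontinuity, and induction on the number of Henneberg moves completes the proof.
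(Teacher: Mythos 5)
Your proposal is correct in outline but takes a genuinely different route from the paper. You follow the classical inductive proof: reduce everything to the rank of the rigidity matrix (Asimow--Roth plus semicontinuity, your Step 1), get necessity by the standard counting argument (Step 2), and get sufficiency by Henneberg induction, handling the type-II (edge-split) move by placing the new vertex collinearly with $\vec p_u$ and $\vec p_v$ and then perturbing. The paper instead deduces the Laman direction from \theoref{parallel}: it assigns generic directions $\vec d$ to the edges, uses \lemref{22matrix} to see that $\vec M_{2,2}(G)$ has rank $2n-3$, realizes those directions faithfully by a point set $\hat{\vec p}$ with $\hat{\vec p}_i-\hat{\vec p}_j=\alpha_{ij}\vec d_{ij}$ and all $\alpha_{ij}\neq 0$, and observes that substituting and rescaling rows turns $\vec M_{2,2}(G)$ into the rigidity matrix $\vec M_{2,3}(G)$ without changing the rank. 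Your approach is self-contained modulo Henneberg's theorem and Asimow--Roth, but its entire difficulty is concentrated in the type-II step, where your phrase ``the three new rows then span the direction previously contributed by $uv$'' is the one point that must be expanded into an honest kernel argument: an infinitesimal flex of the new framework restricts to a flex of the old one because collinearity makes the constraints at $0u$ and $0v$ imply the deleted constraint at $uv$, and the edge $0w$ with $w$ off that line then pins the new vertex; maximal rank being a Zariski-open condition promotes this degenerate witness to genericity. The paper's route buys two things the Henneberg induction does not: an explicit, purely linear-algebraic construction of an infinitesimally rigid realization (solve the direction network), together with an explicit description of the generic set via \lemref{parallelgeneric}, and, crucially for this paper, a method that carries over essentially verbatim to the looped/slider setting of \theoref{slider}, where an inductive construction of looped-Laman graphs would be considerably more delicate.
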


Our main rigidity result is a Maxwell-Laman-type theorem for slider-pinning rigidity.

\begin{theorem}[\slider][\textbf{Generic bar-slider rigidty}]\theolab{slider}
Let $(G,\vec \ell,\vec n,\vec s)$ be a generic bar-slider framework.  Then
$(G,\vec \ell,\vec n,\vec s)$ is minimally rigid if and only if $G$ is looped-Laman.
\end{theorem}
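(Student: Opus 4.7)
My plan is to follow the strategy signposted in the abstract: reduce bar-slider rigidity to a statement about \emph{direction-slider networks}, and then prove the Maxwell-Laman-type characterization in that setting, where the combinatorics are cleaner. Concretely, I would first set up the bar-slider rigidity matrix $R(G,\vec p, \vec n, \vec s)$: one row per bar, which is the usual Maxwell-Laman row with entries $\pm(\vec p_i-\vec p_j)$ in the blocks for $i$ and $j$; and one row per slider, which puts the slider normal $\vec n_i$ in block $i$ and zeros elsewhere. Minimal slider-pinning rigidity is equivalent to $R$ being a square $(2n)\times(2n)$ matrix with independent rows, and this depends only on the row-matroid. Since the condition is open and defined by polynomial minors, it is enough to produce a single realization with independent rows to certify generic rigidity, and a single dependency to certify generic dependence.

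The next step is the reduction to direction-slider networks. Replacing each bar constraint with the corresponding parallel-direction constraint changes the bar row $\pm(\vec p_i-\vec p_j)$ into $\pm(\vec p_i-\vec p_j)^\perp$, i.e., the same row rotated by $90^\circ$. This is a block-diagonal rotation of the rigidity matrix, so it preserves row rank and independence. Hence $(G,\vec\ell,\vec n,\vec s)$ is generically minimally rigid if and only if the associated direction-slider network with generic edge directions and slider normals has a unique (up to scale? no—the sliders kill translations as well) realization, which is the combinatorial question I want to solve.

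For the combinatorial characterization, necessity (the Maxwell direction) is the easier half: if a subset of vertices $V'$ induces a subgraph with more bars-plus-loops than $2|V'|$, then the corresponding submatrix has more rows than columns after restricting to the blocks of $V'$, giving a dependency. The ``at most two loops per vertex'' and ``at most one loop on an edge-subgraph'' refinements follow the same way, by restricting to the appropriate slots. This yields the looped-Laman sparsity as a necessary condition.

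The main obstacle, and the bulk of the work, is sufficiency: I need to show that every looped-Laman graph admits a generic direction-slider realization whose rigidity matrix has independent rows. The natural approach is an inductive Henneberg-type construction for looped-Laman graphs—base cases with few vertices and a few ``loop-adding'' and ``vertex-extension'' moves analogous to Laman's $0$- and $1$-extensions—together with a proof that each such move preserves generic independence of the rows. The delicate step is checking that the extension moves really are always available in a looped-Laman graph (a matroidal/sparsity argument on the $(2,0,2)$-type count) and that after performing a move one can pick new generic directions/normals so that no unexpected cancellations occur. Once the inductive step is established, combining it with the reduction from bar-slider to direction-slider networks and the Maxwell necessity gives the theorem.
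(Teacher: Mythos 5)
Your high-level architecture (rigidity matrix, $90^\circ$ rotation to a direction-slider picture, Maxwell counting for necessity) matches the paper's, but the hard direction is where your plan diverges and where it has a genuine gap. The paper does \emph{not} use a Henneberg-type induction. It proves sufficiency in three concrete steps: (i) the matrix $\vec M_{2,0,2}(G)$ with \emph{fully independent} generic entries represents the $(2,0,2)$-graded-sparsity matroid, via a Laplace expansion of the determinant into products of looped-forest determinants and the decomposition of looped-$(2,2)$ graphs into two looped forests; (ii) a generic direction-slider network on a looped-Laman graph has a unique \emph{faithful} realization (Theorem~\ref{theo:sliderparallel}), proved by showing that a collapsed edge would make the system equivalent to the contracted system $\vec S(G/ij,\vec d,\vec n,\vec s)$, which is generically unsolvable because $G/ij$ minus one loop is looped-$(2,2)$ and the extra loop imposes an incompatible generic equation; (iii) substituting the point differences of that faithful realization for the generic directions, and rescaling rows, turns $\vec M_{2,0,2}(G)$ into the rigidity matrix $\vec M_{2,0,3}(G)$ without losing rank. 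Your Henneberg plan replaces (i)--(iii) with an inductive construction of looped-Laman graphs plus a rank-preservation argument for each move; you correctly flag that the availability of the moves and the non-cancellation after each extension are ``the delicate step,'' but that step \emph{is} the theorem --- in particular the edge-splitting ($1$-extension) move is exactly the classically hard point of Laman-type proofs, and an inductive characterization of looped-Laman graphs is not established anywhere in your sketch. As written, the sufficiency direction is a restatement of the difficulty rather than a proof.

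A second, more localized issue: your claim that the block-diagonal rotation makes generic bar-slider rigidity \emph{equivalent} to realizability of a direction-slider network ``with generic edge directions'' is too quick. The rotation gives a pointwise rank equality between the rigidity matrix at $\vec p$ and the direction matrix at the \emph{specific} directions $\vec d_{ij}=\vec p_i-\vec p_j$. But the tuples of directions arising as difference sets of point configurations form a thin subvariety of $\mathbb{R}^{2m}$ (dimension at most $2n-2$ versus $2m$), so a property that holds for generic free directions need not hold on that subvariety. Bridging this gap --- showing that generic directions are realizable, up to a nonzero scale per edge, by a point set with no collapsed edges --- is precisely the content of the faithful-realization theorem, and it is the reason the paper proves Theorem~\ref{theo:sliderparallel} before deducing Theorem~\ref{theo:slider}. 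If you intend to prove rigidity directly by induction on frameworks you can drop the direction-network detour entirely; if you intend to work with direction-slider networks you must prove the faithful realization statement, not just the rank statement.
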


Our proof relies on a new technique and proceeds via direction networks, defined next.

\paragraph{Direction networks.}
A \emph{direction network} $(G,\vec d)$ is a graph $G$ together with an
assignment of a direction vector $\vec d_{ij}\in \Re^2$ to each edge.  A
realization $G(\vec p)$ of a direction network is an embedding of $G$
onto a point set $\vec p$ such that $\vec p_i-\vec p_j$ is in the direction
$\vec d_{ij}$; if the endpoints of every edge are distinct, the realization is
\emph{faithful}.

The \emph{direction network realizability problem} is to find a realization
$G(\vec p)$ of a direction network $(G,\vec d)$.

\paragraph{Direction-slider networks.}
We define a \emph{direction-slider network} $(G,\vec d,\vec n,\vec s)$ to be
an extension of the direction network model to include sliders.  As in slider-pinning rigidity, the
combinatorial model for a slider is defined to be a self-loop in the graph $G$.
A realization $G(\vec p)$ of a direction-slider network respects the given direction
for each edge, and puts $\vec p_i$ on the line specified for each slider.  A realization is
\emph{faithful} is the endpoints of every edge are distinct.

\paragraph{Generic direction network realizability.}
Both the direction network realization problem and the direction-slider network
realization problem give rise to a \emph{linear} system of equations, in contrast
to the quadratic systems arising in rigidity, greatly simplifying the analysis
of the solution space.

The following theorem was proven by Whiteley.

We give a new proof, using different geometric and combinatorial techniques, and we give
an explicit description of the set of generic directions.

\begin{theorem}[\parallelthm][\textbf{Generic direction network realization
(Whiteley \cite{Whiteley:1989p992,Whiteley:1988p137,whiteley:Matroids:1996})}]\theolab{parallel}
Let $(G,\vec d)$ be a generic direction network, and let $G$ have $n$
vertices and $2n-3$ edges.
Then $(G,\vec d)$ has a (unique, up to translation and rescaling) faithful realization if and only if $G$ is a Laman graph.
\end{theorem}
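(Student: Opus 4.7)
The plan is to analyze the direction network realization problem as a linear system in the unknown point coordinates $\vec p \in (\Re^2)^n$. Each edge $ij$ with prescribed direction $\vec d_{ij}$ contributes one linear equation $\vec d_{ij}^{\perp} \cdot (\vec p_i - \vec p_j) = 0$, producing a direction matrix $M(\vec d)$ of size $(2n-3) \times 2n$. The first observation is that for \emph{any} $\vec d$, the kernel of $M(\vec d)$ contains the $3$-dimensional subspace spanned by the two translations and the uniform scaling $\vec p_i \mapsto \lambda \vec p_i$; hence $\operatorname{rank} M(\vec d) \le 2n-3$ always, and a realization that is unique up to translation and scaling exists exactly when this rank is attained.

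For the necessity direction (``faithfully realizable $\Rightarrow$ Laman''), I would argue by contrapositive using subgraph degeneracy. If $G$ has $2n-3$ edges but is not Laman, then some subgraph $H$ on $n_H \ge 2$ vertices carries strictly more than $2n_H - 3$ edges. For generic directions assigned to the edges of $H$, the corresponding submatrix attains its maximum possible rank $2n_H - 2$, leaving only the translations of $H$ in its kernel. Consequently, any realization of $G$ is forced to collapse all vertices of $H$ to a single point, so it cannot be faithful.

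For the sufficiency direction (``Laman $\Rightarrow$ faithfully realizable''), I would induct using the Henneberg construction: every Laman graph is built from a single edge by a sequence of Henneberg~I (add a degree-$2$ vertex) and Henneberg~II (add a degree-$3$ vertex and remove one edge among its neighbors) moves. At each step I would verify two things for generic choices of the newly specified directions: (a)~the rank of the direction matrix increases by the correct amount, keeping it at $2|V|-3$, and (b)~no edge of the updated graph becomes collapsed in any kernel representative outside the trivial $3$-dimensional subspace. The base case of a single edge is immediate. An alternative route uses the \threettwo{} decomposition alluded to in the preamble: partition the edges of a Laman graph into three edge-disjoint trees each spanning all vertices such that every vertex lies in exactly two of them, and exploit this combinatorial structure to exhibit a single direction assignment witnessing full rank, after which openness of the full-rank condition delivers a dense set of good directions.

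The main obstacle I expect is not the rank statement, which is essentially matroidal, but \emph{faithfulness}: full rank $2n-3$ only guarantees uniqueness modulo the trivial kernel; one must further rule out that some particular edge $ij$ satisfies $\vec p_i = \vec p_j$ in every non-trivial kernel representative. The inductive construction lets one control this step by step: at each Henneberg move the set of direction assignments causing the newly added edge, or some previously realized edge, to collapse forms a proper algebraic subvariety of the direction space. Taking the finite union of these bad loci over all construction steps yields the explicit description of the generic directions promised by the theorem.
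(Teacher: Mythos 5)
Your necessity argument and your reduction to the linear system $\vec M_{2,2}(G)\vec p=\vec 0$ match the paper's, but two points need attention. First, a small conceptual slip: the kernel does not contain a \emph{fixed} $3$-dimensional subspace "spanned by the two translations and the uniform scaling" --- scaling is a symmetry of the solution set because the system is homogeneous, not an extra kernel vector independent of the solution; the bound $\operatorname{rank}\le 2n-3$ holds simply because there are only $2n-3$ rows, and the entire content of the theorem is whether the third kernel dimension is realized by a \emph{non-collapsed} configuration.

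Second, and more seriously, your sufficiency argument has a genuine gap at the Henneberg~II step. When you add a degree-$3$ vertex $k$ adjacent to $i,j,l$ and delete the edge $ij$, you assign three new generic directions; but for fixed positions $\vec p_i,\vec p_j,\vec p_l$ the three lines through these points in the prescribed directions generically have \emph{no} common intersection, so the new vertex cannot simply be inserted into the old realization. The unique (up to translation/scaling) solution of the enlarged linear system moves \emph{every} vertex, so the inductive hypothesis "all previously realized edges are non-collapsed" does not transfer; to show the bad locus is a \emph{proper} subvariety you must exhibit at least one direction assignment for the new graph whose global solution is faithful, and that is exactly the step your plan asserts rather than proves. (Henneberg~I is fine, but graphs like the triangular prism have minimum degree $3$, so type~II cannot be avoided.) The paper sidesteps induction entirely: it shows that $G$ is Laman iff every contraction $G/ij$ is a $(2,2)$-graph, that a collapsed edge $ij$ makes the solution space of $\vec P(G,\vec d)$ coincide with that of $\vec P(G/ij,\vec d)$, and that generic realizations of $(2,2)$-graphs are \emph{totally} collapsed --- so one collapsed edge would collapse everything, contradicting a normalization $\vec p_1\neq\vec p_2$. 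Your approach would buy a constructive, step-by-step certificate if the type~II rank-and-faithfulness verification were carried out (essentially Tay's or Whiteley's route, which the paper discusses in its remarks), but as written that verification is the missing core of the proof.
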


For direction-slider networks we have a similar result to \theoref{parallel}.

\begin{theorem}[\sliderparallelthm][\textbf{Generic direction-slider network realization}]\theolab{sliderparallel}
Let $(G,\vec d,\vec n,\vec s)$ be a generic direction-slider network.  Then
$(G,\vec d,\vec n,\vec s)$ has a (unique) faithful realization if and only if
$G$ is a looped-Laman graph.
\end{theorem}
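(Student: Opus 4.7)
The direction-slider realization problem is \emph{linear}: each edge $ij$ gives the equation $\vec{d}_{ij}^\perp \cdot (\vec{p}_i - \vec{p}_j) = 0$ and each loop at $i$ gives $\vec{n}_i \cdot \vec{p}_i = \vec{n}_i \cdot \vec{s}_i$. Stacking these yields a system $M\vec{p} = \vec{b}$ with an $(m+\ell) \times 2n$ coefficient matrix. Existence of a unique realization is equivalent to $M$ having full column rank $2n$, so the theorem reduces to proving that the generic rank of $M$ equals $2n$ if and only if $G$ is looped-Laman. Faithfulness (distinct endpoints on every non-loop edge) then follows from a standard genericity argument, since coincidence $\vec{p}_i = \vec{p}_j$ cuts out a proper subvariety of the parameter space once rank is maximal.

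For necessity, suppose $G$ is not looped-Laman. Either $m + \ell < 2n$, which immediately precludes full column rank, or some vertex subset $V'$ violates the sparsity count. In the latter case, the rows of $M$ indexed by edges and loops inside $G[V']$ are supported on the $2|V'|$ columns belonging to $V'$. The edge-only restriction always has a three-dimensional kernel (two translations and an overall scaling), and I would show that each generic loop at a vertex of $V'$ kills at most one additional dimension of that kernel; this gives a rank bound on the restricted submatrix that exactly matches the looped-Laman sparsity count on $V'$. Violation of the count forces linear dependence among these rows, hence $\operatorname{rank}(M) < 2n$ regardless of the generic data.

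For sufficiency, I would argue by induction on $n$ using a Henneberg-type construction of looped-Laman graphs: every looped-Laman graph on more than one vertex can be reduced to a smaller looped-Laman graph by deleting a vertex of the appropriate ``small'' type, with possible rewiring to restore the sparsity count. The base case is a single vertex carrying two loops, where $M$ is the $2 \times 2$ matrix with rows $\vec{n}_1$ and $\vec{n}_2$, nonsingular for generic normals. For each vertex-addition operation (two-edge, edge-plus-loop, and two-loop additions, plus the corresponding edge-splitting moves), the inductive step adds two new rows and two new columns to $M$; a routine dimension count shows that for generic choice of the new direction/normal/position data, the new $2 \times 2$ diagonal block combines with the full-rank matrix from the induction hypothesis to keep $M$ of full column rank.

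The main technical obstacle is establishing the Henneberg-type reduction scheme itself for the looped-Laman class, which is richer than Laman and requires enumerating and combinatorially justifying all vertex-removal moves, including the interaction between loops and the Laman-sparsity constraint on loopless subgraphs. Once this combinatorial scaffolding is in place, the geometric content of each inductive step is a routine nonsingularity check. (An alternative route is to encode each loop as an edge to an auxiliary pinned vertex and reduce to Theorem~\theoref{parallel}, but this is delicate because the directions induced on the auxiliary edges are a specific algebraic function of the slider normals, so one must verify that this specialization still lies in the non-degenerate locus of Whiteley's theorem.)
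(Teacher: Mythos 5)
Your reduction of the theorem to ``the generic rank of $M$ equals $2n$ if and only if $G$ is looped-Laman'' is where the argument breaks. Full column rank $2n$ of the realization matrix characterizes the strictly weaker class of looped-$(2,2)$ graphs (\lemref{202matrix}), not looped-Laman graphs. Concretely, take $K_4$ (which has $6=2\cdot 4-2$ edges and violates $(2,3)$-sparsity) and add two loops: the result is looped-$(2,2)$ but not looped-Laman, the system $\vec S(G,\vec d,\vec n,\vec s)$ has full rank $2n$ and a unique solution for generic data, yet that solution collapses all of $K_4$ to a single point (\lemref{22parallel} applied to the $(2,2)$-tight subgraph), so no faithful realization exists. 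This shows that the entire content of the theorem lives in the step you dismiss as ``a standard genericity argument'': to know that $\vec p_i=\vec p_j$ cuts out a \emph{proper} subvariety of the parameter space you must first show the unique solution does not collapse $ij$ identically as a function of the generic data, and the example shows this can fail at full rank. Your necessity argument has the mirror-image flaw: a violation of the $(2,3)$-count on an edge-only subgraph $G[V']$ does not force a row dependency (the edge rows supported on $V'$ can attain rank $2n'-2$, not $2n'-3$; your claimed three-dimensional kernel with a scaling direction collapses to two dimensions precisely when the subconfiguration is forced to a point), so $\operatorname{rank}(M)<2n$ does not follow --- what follows is a collapsed edge.

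The missing mechanism is the one the paper supplies: if edge $ij$ is collapsed in every solution, the solution space coincides with that of the contracted system $\vec S(G/ij,\vec d,\vec n,\vec s)$ (\corref{slidercollapsecontract}); by \lemref{loopedlamancontract}, when $G$ is looped-Laman each contraction $G/ij$ is a looped-$(2,2)$ graph with one surplus loop, hence an inhomogeneous system of $2(n-1)+1$ generically independent equations in $2(n-1)$ unknowns, which is generically \emph{unsolvable} (\lemref{genericsliderdirections}, \lemref{slidergenericity}); this contradiction is what rules out collapsed edges, and it is exactly where the $(2,3)$-sparsity of the edge-only subgraphs gets used. Your Henneberg induction for sufficiency is a genuinely different route, but you leave its combinatorial core (a complete reduction scheme for looped-Laman graphs) unproved, and even granting it, the inductive step as you describe it only tracks column rank, so the faithfulness problem above would recur at every vertex-addition and edge-splitting move.
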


\paragraph{From generic realizability to generic rigidity.}
Let us briefly sketch how the rigidity theorems \ref{theo:laman} and
\ref{theo:slider} follow from the direction network realization theorems
\ref{theo:parallel} and \ref{theo:sliderparallel} (full details are given in
\secref{rigidity}).
For brevity, we sketch only how \theoref{parallel} implies \theoref{laman}, an implication that can be traced back to Whiteley in \cite{Whiteley:1989p992}.
The proof that \theoref{sliderparallel} implies \theoref{slider} will follow a similar proof plan.

All known proofs of the Maxwell-Laman theorem proceed
via \emph{infinitesimal rigidity}, which is a linearization of the rigidity
problem obtained by taking the differential of the system of equations
specifying the edge lengths 
and sliders 
to obtain the \emph{rigidity matrix} $\vec M_{2,3}(G)$
of the abstract framework (see \figref{rigidity-matrices}(a)).

One then proves the following two statements about bar-joint frameworks
$(G,\bm{\ell})$ with $n$ vertices and $m=2n-3$ edges:
\begin{itemize}
\item In realizations where the rigidity matrix achieves rank $2n-3$
the framework is rigid.
\item The rigidity matrix achieves rank $2n-3$
for almost all realizations (these are called \emph{generic}) if and only if the graph $G$
is Laman.
\end{itemize}

The second step, where the rank of the rigidity matrix is established from only a combinatorial assumption, is the (more difficult) ``Laman direction''.  The plan is in two steps:
\begin{itemize}
\item We begin with a matrix $\vec M_{2,2}(G)$, arising from the direction network realization problem,
that has non-zero entries in the same positions as the rigidity matrix, but a simpler
pattern: $\vec d_{ij}=(a_{ij},b_{ij})$ instead of $\vec p_i-\vec p_j$ (see \figref{m22}).
The rank of the simplified matrices is established in \secref{natural} via a
matroid argument.
\item We then apply the direction network realization  \theoref{parallel}
to a Laman graph.  For generic (defined in detail in \secref{parallel})
edge directions $\vec d$ there exists a point set $\vec p$
such that $\vec p_i-\vec p_j$ is in the direction $\vec d_{ij}$, with $\vec p_i\neq \vec p_j$
when $ij$ is an edge.  Substituting the $\vec p_i$ into $\vec M_{2,2}(G)$ recovers the
rigidity matrix while preserving rank, which completes the proof.
\end{itemize}

\paragraph{Genericity.}
In this paper, the term \emph{generic} is used in the standard sense of algebraic geometry: a property is generic
if it holds on the (open, dense) complement of an algebraic set defined by a finite number of polynomials.
In contrast, the rigidity literature employs a number of definitions that are not as amenable to combinatorial or
computational descriptions. Some authors \cite[p. 92]{lovasz:yemini} define a {\em generic framework} as being one where the points $\vec p$ are algebraically independent.  Other frequent definitions used in rigidity theory require that generic properties hold {\em for most of} the point sets (measure-theoretical) \cite[p. 1331]{whiteley:surveyHandbook:2004} or focus on properties which, if they hold for a point set $\vec p$ (called generic for the property), then they hold for any point in some open neighborhood (topological) \cite{gluck:almostAll:1975}.

For the specific case of Laman bar-joint rigidity we identify two types of conditions on the defining polynomials:
some arising from the genericity of directions in the direction network with the same graph as the framework being analyzed;
and a second type arising from the constraint the the directions be realizable as the difference set of a planar point set.
To the best of our knowledge, these observations are new.

\paragraph{Organization.} The rest of this paper is organized as follows.
\secref{sparse} defines Laman and looped-Laman graphs and gives the combinatorial tools from the theory of $(k,\ell)$-sparse and
$(k,\bm{\ell})$-graded sparse graphs that we use to analyze direction networks and
direction-slider networks.  \secref{natural} introduces the needed results about
$(k,\ell)$-sparsity-matroids, and we prove two matroid representability results for
the specific cases appearing in this paper.  \secref{parallel} defines
direction networks, the realization problem for them, and proves \theoref{parallel}.
\secref{sliderparallel} defines slider-direction networks and proves the analogous
\theoref{sliderparallel}. In \secref{xyparallelslider} we extend \theoref{sliderparallel}
to the specialized situation where all the sliders are axis-parallel.

In \secref{rigidity} we move to the setting of frameworks, defining bar-slider rigidity and
proving the rigidity Theorems \ref{theo:laman} and \ref{theo:slider} from our results on
direction networks.  In addition, we discuss the relationship between our work and
previous proofs of the Maxwell-Laman theorem.

\paragraph{Notations.}
Throughout this paper we will use the notation  $\vec p \in (\Re^2)^n$ for a set of $n$ points in the plane.  By identification of $(\Re^2)^n$ with $\mathbb{R}^{2n}$, we can think of $\vec p$ either as a vector of point $\vec p_i=(a_i,b_i)$ or as a flattened vector $\vec p=(a_1,b_1,a_2,b_2,\ldots,a_n,b_n)$.
When points are used as unknown variables, we denote them as $\vec p_i=(x_i,y_i)$.

Analogously, we use the notation $\vec d\in (\Re^2)^m$ for a set of $m$ directions in $\Re^2$.  Since
directions will be assigned to edges of a graph, we index the entries of $\vec d$ as $\vec d_{ij}=(a_{ij},b_{ij})$ for the direction of the edge $ij$.

The graphs appearing in this paper have edges and also self-loops (shortly, loops).
Both multiple edges and multiple self loops will appear, but the multiplicity will never
be more than two copies. We will use $n$ for the
number of vertices, $m$ for the number of edges, and $c$ for the numbers of self-loops.  Thus
for a graph $G=(V,E)$ we have $\card{V}=n$ and $\card{E}=m+c$.  Edges are written as $(ij)_k$
for the $k$th copy of the edge $ij$, ($k=1,2$).  As we will not usually need to distinguish between copies,
we abuse notation and simply write $ij$, with the understanding that multiple edges are considered
separately in ``for all'' statements.  The $j$th loop on vertex $i$ is denoted $i_j$ ($j=1,2$).

For subgraphs $G'$ of a graph $G$, we will typically use $n'$ for the number of vertices, $m'$
for the number of edge and $c'$ for the number of loops.

A contraction of a graph $G$ over the edge $ij$ (see \secref{sparse} for a complete definition) is
denoted $G/ij$.

We use the notation $[n]$ for the set $\{1,2,\ldots,n\}$. If $\vec A$ is an $m\times n$ matrix, then $\vec A[M,N]$ is the sub-matrix induced by the rows $M\subset [m]$ and $N\subset [n]$.

\section{Sparse and graded-sparse graphs}\seclab{sparse}
Let $G$ be a graph on $n$ vertices, possibly with multiple edges and loops.
$G$ is \emph{$(k,\ell)$-sparse} if for all
subgraphs $G'$ of $G$ on $n'$ vertices, the numbers of induced edges and loops $m'+c'\le kn'-\ell$.
If, in addition, $G$ has $m+c=kn-\ell$ edges and loops, then $G$ is \emph{$(k,\ell)$-tight}.
An induced subgraph of a $(k,\ell)$-sparse graph $G$ that is $(k,\ell)$-tight is called a \emph{block} in $G$; a maximal block is called a \emph{component} of $G$.

Throughout this paper, we will be interested in two particular cases of sparse graphs: $(2,2)$-tight graphs and $(2,3)$-tight graphs.  For brevity of notation we call these \emph{$(2,2)$-graphs} and \emph{Laman graphs} respectively.  We observe that the sparsity parameters of both $(2,2)$-graphs and Laman graphs do not have self-loops.  Additionally, Laman graphs are simple, but $(2,2)$-graphs may have two parallel edges (any more would violate the sparsity condition).  See \figref{22-examples} and \figref{23-examples} for examples.
\begin{figure}[htbp]
\centering

\subfigure[]{\includegraphics[width=0.45\textwidth]{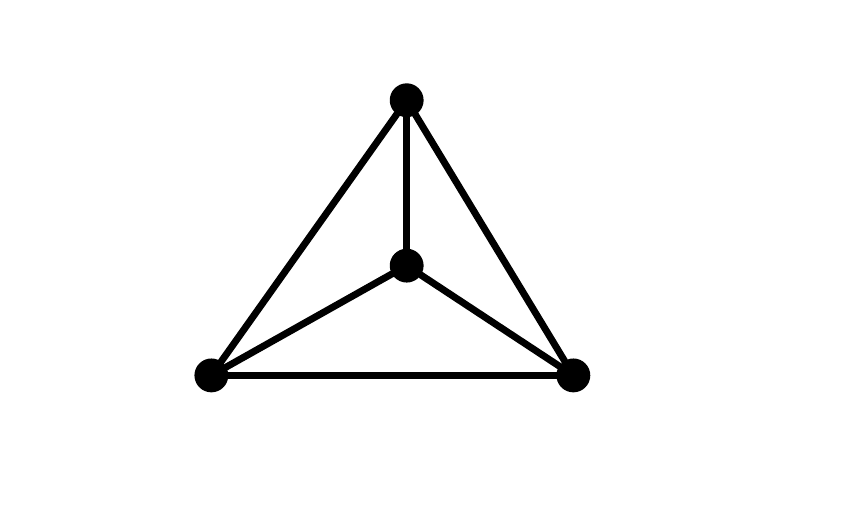}}
\subfigure[]{\includegraphics[width=0.45\textwidth]{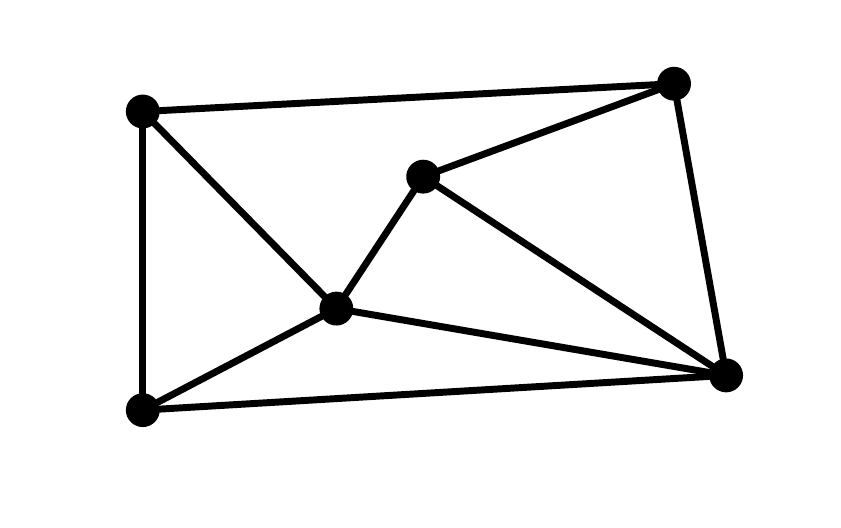}}
\caption{Examples of $(2,2)$-graphs: (a) $K_4$; (b) a larger example on $6$ vertices.}
\label{fig:22-examples}
\end{figure}

\begin{figure}[htbp]
\centering

\subfigure[]{\includegraphics[width=0.45\textwidth]{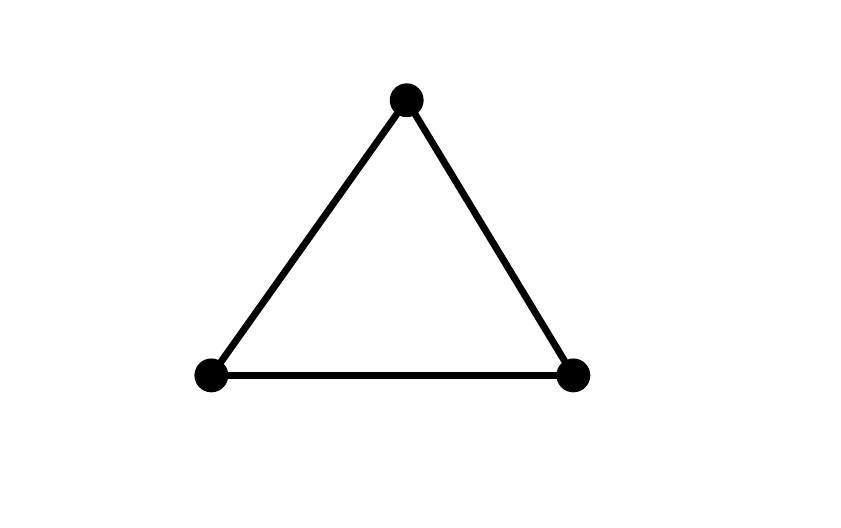}}
\subfigure[]{\includegraphics[width=0.45\textwidth]{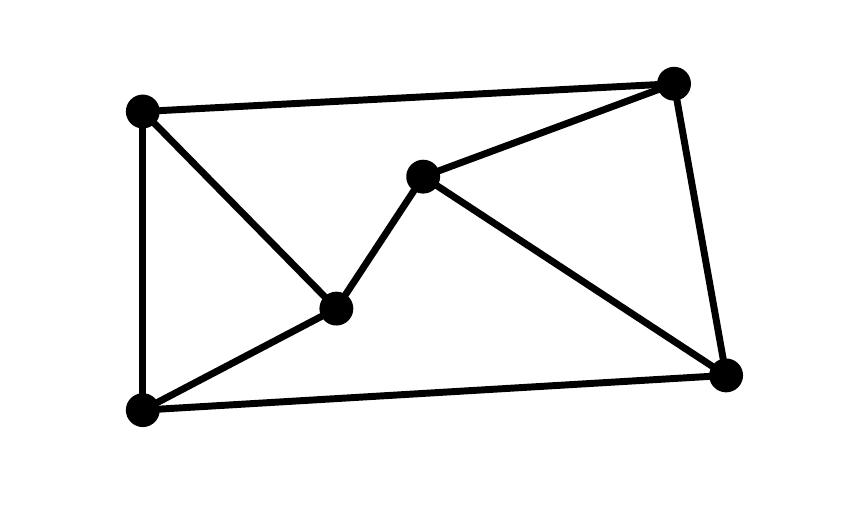}}
\caption{Examples of Laman graphs.}
\label{fig:23-examples}
\end{figure}

\paragraph{Graded sparsity.} We also make use of a specialization of the $(k,\bm{\ell})$-graded-sparse
graph concept from our paper \cite{graded}.  Here, $\bm{\ell}$ is a vector of integers, rather than just a single integer value.
To avoid introducing overly general notation that is immediately specialized, we define it only for the specific parameters we use in this paper.

Let $G$ be a graph on $n$ vertices with edges and also self-loops.  $G$ is $(2,0,2)$-graded-sparse if:
\begin{itemize}
\item All subgraphs of $G$ with only edges (and no self-loops) are $(2,2)$-sparse.
\item All subgraphs of $G$ with edges and self-loops are $(2,0)$-sparse.
\end{itemize}
If, additionally, $G$ has $m+c=2n$ edges and loops, then $G$ is \emph{$(2,0,2)$-tight} (shortly looped-$(2,2)$).  See \figref{looped22-examples} for examples of looped-$(2,2)$ graphs.

\begin{figure}[htbp]
\centering

\subfigure[]{\includegraphics[width=0.45\textwidth]{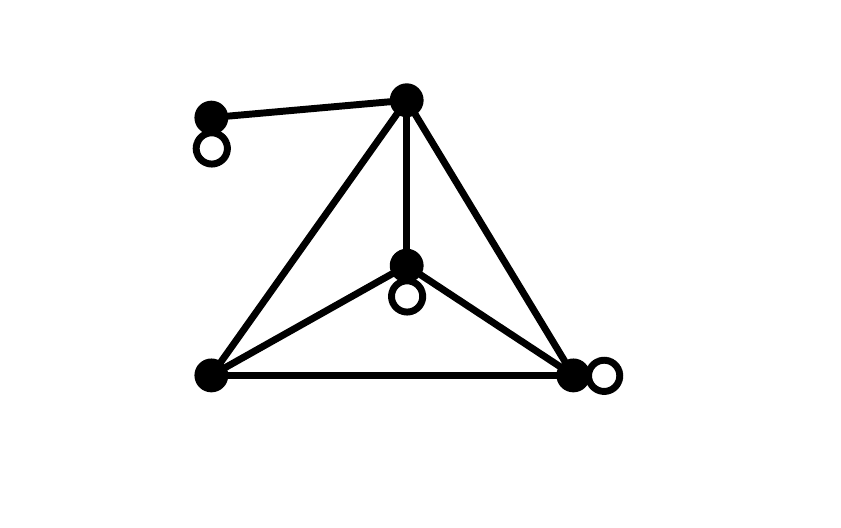}}
\subfigure[]{\includegraphics[width=0.45\textwidth]{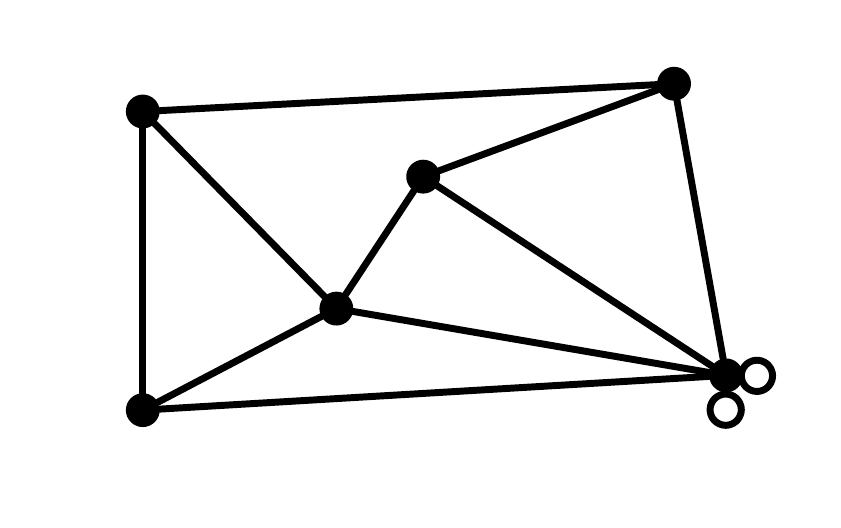}}
\caption{Examples of looped-$(2,2)$ graphs.}
\label{fig:looped22-examples}
\end{figure}

Let $G$ be a graph on $n$ vertices with edges and also self-loops.  $G$ is $(2,0,3)$-graded-sparse if:
\begin{itemize}
\item All subgraphs of $G$ with only edges (and no self-loops) are $(2,3)$-sparse.
\item All subgraphs of $G$ with edges and self-loops are $(2,0)$-sparse.
\end{itemize}
If, additionally, $G$ has $m+c=2n$ edges and loops, then $G$ is \emph{$(2,0,3)$-tight} (shortly looped-Laman).  See \figref{looped23-examples} for examples of looped-Laman graphs.
\begin{figure}[htbp]
\centering

\subfigure[]{\includegraphics[width=0.45\textwidth]{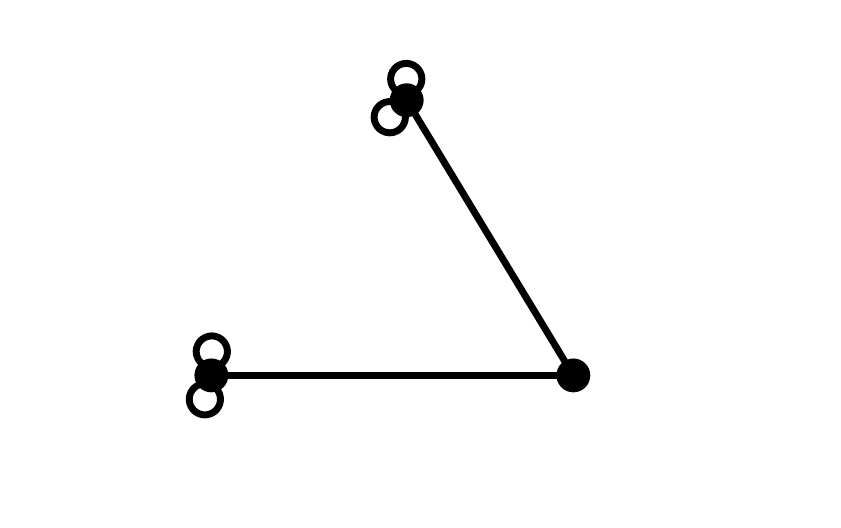}}
\subfigure[]{\includegraphics[width=0.45\textwidth]{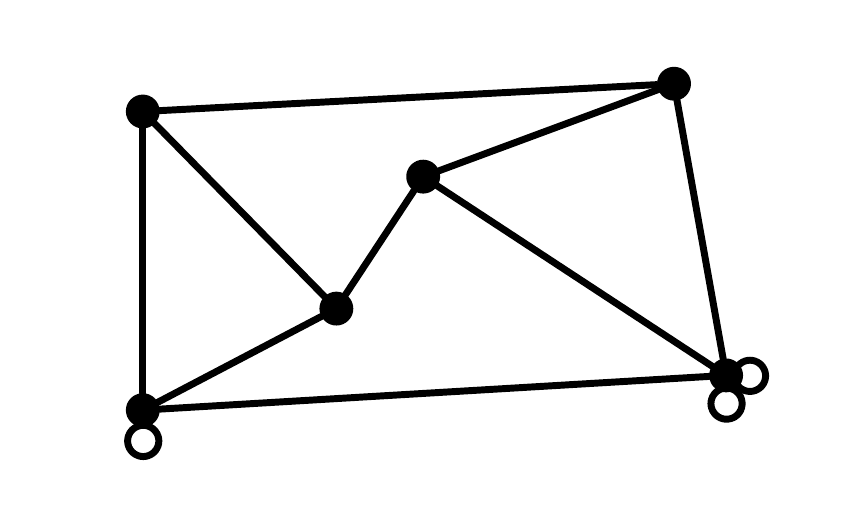}}
\caption{Examples of looped-Laman graphs.}
\label{fig:looped23-examples}
\end{figure}

\paragraph{Characterizations by contractions.}
We now present characterizations of Laman graphs and looped-Laman graphs in terms of
graph contractions.  Let $G$ be a graph (possibly with loops and multiple edges), and let $ij$
be an edge in $G$.  The \emph{contraction of $G$ over $ij$}, $G/ij$ is the graph obtained by:
\begin{itemize}
\item Discarding vertex $j$.
\item Replacing each edge $jk$ with an edge $ik$, for $k\neq i$.
\item Replacing each loop $j_k$ with a loop $i_k$.
\end{itemize}
By symmetry, we may exchange the roles of $i$ and $j$ in this definition without changing it.  We note that
this definition of contraction \emph{retains} multiple \emph{edges} created during the contraction, but that \emph{loops} created by contracting are discarded.  In particular, any loop in $G/ij$
corresponds to a loop in $G$.  \figref{contraction-examples} shows an example of contraction.

\begin{figure}[htbp]
\centering

\subfigure[]{\includegraphics[width=0.45\textwidth]{k3}}
\subfigure[]{\includegraphics[width=0.45\textwidth]{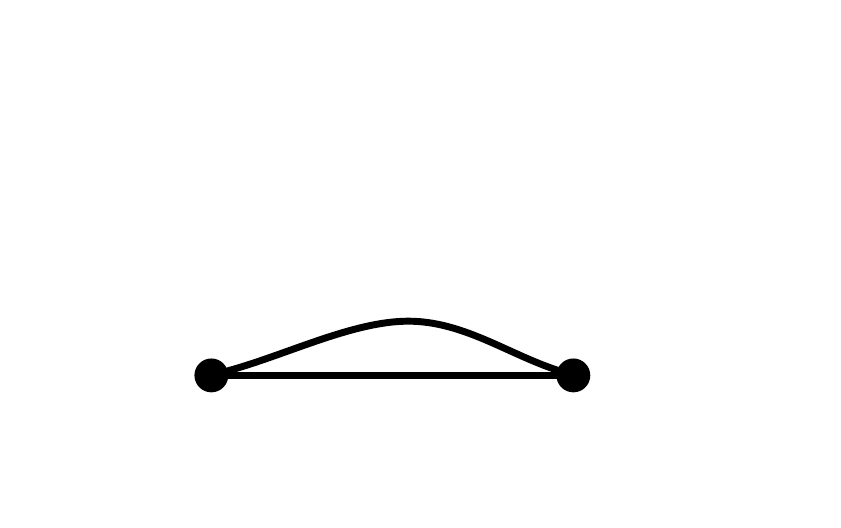}}
\caption{Contracting an edge of the triangle: (a) before contraction; (b) after contraction
we get a doubled edge but \emph{not} a loop, since there wasn't one in the triangle before
contracting.}
\label{fig:contraction-examples}
\end{figure}

The following lemma gives a characterization of Laman graphs in terms of contraction and $(2,2)$-sparsity.
\begin{lemma}\lemlab{22contract}
Let $G$ be a simple $(2,2)$-sparse graph with $n$ vertices and $2n-3$ edges.  Then $G$ is a Laman graph
if and only if after contracting \emph{any} edge $ij\in E$, $G/ij$ is a $(2,2)$-graph on $n-1$
vertices.
\end{lemma}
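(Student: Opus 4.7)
The plan is to verify both directions of the biconditional by tracking edge counts across a contraction. I start with the arithmetic: in $G/ij$ the only edge of $G$ removed is $ij$ itself, while every other edge (including new parallel edges created at the merged vertex) is retained by the stated convention. Hence $G/ij$ has $n-1$ vertices and $(2n-3)-1 = 2(n-1)-2$ edges, matching the tight count for $(2,2)$; so I only ever need to check the sparsity condition on $G/ij$.

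For the forward direction, assume $G$ is Laman and fix an edge $ij$; denote by $v$ the merged vertex of $G/ij$. To verify $(2,2)$-sparsity, let $H'$ be any subgraph on $n''$ vertices with $m''$ edges. If $v \notin V(H')$, then $H'$ is already a subgraph of $G$ and $(2,3)$-sparsity gives $m'' \le 2n''-3 \le 2n''-2$. Otherwise, lift $H'$ to a subgraph $H$ of $G$ by replacing $v$ with both $i$ and $j$, pulling each edge of $H'$ back to its preimage under the natural bijection $E(G/ij) \leftrightarrow E(G) \setminus \{ij\}$, and adjoining the edge $ij$ itself. Then $|V(H)| = n''+1$, $|E(H)| = m''+1$, and applying the Laman bound to $H$ gives $m''+1 \le 2(n''+1)-3$, i.e., $m'' \le 2n''-2$, as required.

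For the converse, I argue by contrapositive. Suppose $G$ is $(2,2)$-sparse with $2n-3$ edges but not Laman. Then some subgraph $H$ has $m'' > 2n''-3$, and combining with $(2,2)$-sparsity pins $m'' = 2n''-2$. Since $G$ has only $2n-3$ edges we have $H \ne G$ (so $n'' < n$), and simplicity of $G$ precludes the $n''=2$ case (which would demand a double edge), so $n'' \ge 3$ and $H$ has at least one edge. Pick any $ij \in E(H)$. Then the induced subgraph of $G/ij$ on $(V(H) \setminus \{i,j\}) \cup \{v\}$ retains every edge of $H$ other than $ij$, giving at least $2n''-3 = 2(n''-1)-1$ edges on $n''-1$ vertices, violating the $(2,2)$-bound $2(n''-1)-2$. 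So $G/ij$ fails to be $(2,2)$-sparse, furnishing the witness required by the contrapositive.

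The main step demanding care is the lift in the forward direction: I must verify that the pulled-back edge set (augmented by $ij$) really is a subgraph of $G$ with exactly $m''+1$ edges, not more or fewer. This is ensured by the contraction convention, under which edges between $v$ and any $w$ in $G/ij$ correspond one-for-one to edges between $\{i,j\}$ and $w$ in $G$; with that bijection fixed, the arithmetic $2(n''+1)-3 = 2n''-1$ closes the inequality and the rest of the argument is bookkeeping.
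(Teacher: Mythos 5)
Your proof is correct and takes essentially the same approach as the paper's: both directions come down to tracking the edge count of a dense (or sparse) subgraph across the contraction, contracting an edge of the violating subgraph for the contrapositive and bounding contracted subgraphs via the Laman count for the forward direction. Your explicit lift of a subgraph of $G/ij$ back to a subgraph of $G$ (and the separate treatment of subgraphs avoiding the merged vertex) just makes precise a correspondence the paper invokes more briskly.
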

\begin{proof}
If $G$ is not a Laman graph, then some subset $V'\subset V$ of $n'$ vertices induces a subgraph $G'=(V',E')$ with $m'\ge 2n'-2$ edges.  Contracting any edge $ij$ of $G'$ leads to a contracted graph $G'/ij$ with $n'-1$ vertices and at least $2n'-3=2(n'-1)-1$ edges, so $G'/ij$ is not $(2,2)$-sparse.  Since $G'/ij$ is an induced subgraph of $G/ij$ for this choice of $ij$, $G/ij$ is not a $(2,2)$-graph.

For the other direction, we suppose that $G$ is a Laman graph and fix a subgraph $G'=(V',E')$ induced by
$n'$ vertices.  Since $G$ is Laman, $G'$ spans at most $2n'-3$ edges, and so for any edge $ij\in E'$ the
contracted graph $G'/ij$ spans at most $2n'-4=2(n'-1)-2$ edges, so $G'/ij$ is $(2,2)$-sparse.  Since this
$G'/ij$ is an induced subgraph of $G/ij$, and this argument holds for \emph{any} $V'\subset V$ and edge $ij$, $G/ij$ is $(2,2)$-sparse for any edge $ij$.  Since $G/ij$ has $2n-2$ edges, it must be a $(2,2)$-graph.
\end{proof}

For looped-Laman graphs, we prove a similar characterization.
\begin{lemma}\lemlab{loopedlamancontract}
Let $G$ be a looped-$(2,2)$ graph.  Then $G$ is looped-Laman if and only
if for \emph{any} edge $ij\in E$ there is a loop $v_w$ (depending on $ij$)
such that $G/ij-v_w$ is a looped-$(2,2)$ graph.
\end{lemma}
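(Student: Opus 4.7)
The plan is to establish both directions, using at the outset that looped-Laman forbids parallel edges (since $(2,3)$-sparsity on two vertices caps the edge count at one), whereas looped-$(2,2)$ permits a double edge.

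For the forward direction, fix an edge $ij$ in a looped-Laman $G$. Since $G$ has no parallel edges, $G/ij$ has $n-1$ vertices and $2n-1$ edges and loops, so removing exactly one loop is needed to reach the looped-$(2,2)$ edge count. First, I would verify that the edge-only subgraphs of $G/ij$ are $(2,2)$-sparse by the same preimage-plus-loss-of-$ij$ accounting used in \lemref{22contract}. The harder task is locating a loop whose removal simultaneously repairs every $(2,0)$-violation. Those violations are in bijection with the family $\mathcal{F}$ of vertex subsets $V^* \supseteq \{i,j\}$ for which $G[V^*]$ is $(2,0)$-tight in $G$. A standard submodularity argument (inclusion-exclusion applied to $V_1^* \cup V_2^*$ and $V_1^* \cap V_2^*$, combined with $(2,0)$-sparsity of $G$) shows that $\mathcal{F}$ is closed under both unions and intersections, so it has a unique $\subseteq$-minimum element $V^*_{\min}$. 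The decisive counting step is then: on $V^*_{\min}$, the $2|V^*_{\min}|$ edges and loops decompose into at most $2|V^*_{\min}| - 3$ edges (by looped-Laman) plus at least three loops, while each vertex supports at most two loops, so some loop $v_w$ must sit on a vertex $v \in V^*_{\min} \setminus \{j\}$. Since $v$ belongs to every $V^* \in \mathcal{F}$, deleting $v_w$ cancels all violations at once and produces a looped-$(2,2)$ graph.

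For the backward direction, I would argue by contrapositive: suppose $G$ is looped-$(2,2)$ but not looped-Laman, so some edge-only subgraph $G'_e$ on $n'$ vertices is $(2,2)$-tight while failing $(2,3)$-sparsity, forcing $m' = 2n' - 2$. If $G'_e$ contains parallel edges $ij$, then contracting one collapses the other into a discarded loop, leaving $G/ij$ with $2n - 2$ edges and loops; removing any loop would drop the count below $2(n-1)$, precluding the looped-$(2,2)$ edge total. Otherwise $G'_e$ is simple, and for any $ij \in E(G'_e)$ the image $G'_e/ij$ is a loop-free subgraph of $G/ij$ on $n' - 1$ vertices with $2n' - 3$ edges, a failure of edge-only $(2,2)$-sparsity that no loop deletion can repair. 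Either case contradicts the contraction hypothesis.

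The main obstacle is the loop-existence step in the forward direction. The submodular lattice structure on $\mathcal{F}$ is template material, but the actual production of the loop rests on the tight interplay between the Laman gap of $3$ and the loop-multiplicity cap of $2$: together they force at least one loop off the vertex $j$, which is exactly what the lemma requires.
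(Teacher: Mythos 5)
Your proof is correct, and the skeleton (handle the edge-only $(2,2)$-sparsity via the accounting of \lemref{22contract}, then locate a single loop that repairs the $(2,0)$-count) matches the paper's, but your key step is genuinely different. The paper argues by contradiction: it assumes some subgraph of $G/ij$ has $m'+c'\ge 2n'+2$ and pulls it back through the contraction to contradict $(2,0,2)$-graded-sparsity of $G$. You instead identify the $(2,0)$-violations of $G/ij$ with the $(2,0)$-tight vertex sets of $G$ containing $\{i,j\}$, use supermodularity of the induced-count function to show this family is a lattice, and then count on its minimum element: the Laman gap of $3$ forces at least three loops there, and the multiplicity cap of $2$ forces one off $j$, giving an explicit loop common to every violation. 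This buys two things the paper's argument leaves implicit: it directly handles the scenario of several excess-one violations that might a priori share no loop (the paper's contradiction hypothesis of excess $\ge 2$ is not the literal negation of ``one loop suffices''), and it produces the repairing loop constructively. Similarly, in the reverse direction your case split on parallel edges in the $(2,2)$-tight witness is more careful than the paper's direct appeal to \lemref{22contract}, which is stated only for simple graphs; your total-count argument for the doubled-edge case is the right fix. The price is a longer argument relying on the standard tight-set lattice machinery, but nothing in it is superfluous.
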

\begin{proof}
Let $G$ have $n$ vertices, $m$ edges, and $c$ loops.  Since $G$ is looped-$(2,2)$, $2n=m+c$.
If $G$ is not looped-Laman, then by \lemref{22contract}, the edges of $G/ij$ are not $(2,2)$-sparse,
which implies that $G/ij-v_w$ cannot be $(2,0,2)$-graded-sparse for any loop $v_w$ because the loops
play no role in the $(2,2)$-sparsity condition for the edges.

If $G$ is looped-Laman, then the edges will be $(2,2)$-sparse in any contraction $G/ij$.  However, $G/ij$
has $n-1$ vertices, $m-1$ edges and $c$ loops, which implies that $m-1+c=2n-1=2(n-1)+1$, so $G/ij$
is not $(2,0)$-sparse as a looped graph.  We have to show that there is \emph{one} loop, which when removed, restores $(2,0)$-sparsity.

For a contradiction, we suppose the contrary: for any contraction $G/ij$, there is some subgraph $(G/ij)'=(V',E')$
of $G/ij$ on $n'$ vertices inducing $m'$ edges and $c'$ loops with $m'+c'\ge 2n'+2$.  As noted above $m'\le 2n'-2$.  If $(G/ij)'$
does not contain $i$, the surviving endpoint of the contracted edge $ij$, then $G$ was not looped-$(2,2)$,
which is a contradiction.  Otherwise, we consider the subgraph induced by $V'\cup \{i\}$ in $G$.  By
construction it has $n'+1$ vertices, $m'+1$ edges and $c'$ loops.  But then we have $m'+1+c'\ge 2n'+3=2(n'+1)+1$, contradicting $(2,0,2)$-graded-sparsity of $G$.
\end{proof}

\section{Natural realizations for $(2,2)$-tight and $(2,0,2)$-tight graphs}\seclab{natural}
Both $(k,\ell)$-sparse and $(k,\ell)$-graded-sparse graphs form matroids, with the
$(k,\ell)$-tight and $(k,\bm{\ell})$-graded-tight graphs as the bases, which
we define to be the $(k,\ell)$-sparsity-matroid and the $(k,\bm{\ell})$-graded-sparsity matroid,
respectively.   Specialized to our case, we talk about the $(2,2)$- and $(2,3)$-sparsity matroids and the  $(2,0,2)$- and $(2,0,3)$-graded-sparsity matroids, respectively.

In matroidal terms, the rigidity Theorems \ref{theo:laman} and \ref{theo:slider} state that
the rigidity matrices for bar-joint and bar-slider frameworks are \emph{representations} of the
$(2,3)$-sparsity matroid and $(2,0,3)$-graded-sparsity matroid, respectives: linear independence among the
rows of the matrix corresponds bijectively
to independence in the associated combinatorial matroid for generic
frameworks.  The difficulty in the proof is that the pattern of the rigidity matrices $\vec{M}_{2,3}(G)$
and $\vec{M}_{2,0,3}(G)$ (see \figref{rigidity-matrices}) contain repeated variables that make the combinatorial analysis of the rank complicated.

By contrast, for the closely related $(2,2)$-sparsity-matroid
and the $(2,0,2)$-graded-sparsity matroid,
representation results are easier to obtain directly.
The results of this section are representations
of the $(2,2)$-sparsity- and $(2,0,2)$-graded-sparsity matroids which are \emph{natural} in the sense
that the matrices obtained have the same dimensions at the corresponding rigidity matrices and
non-zero entries at the same positions.
The $(2,2)$-sparsity-matroid case is due to Whiteley \cite{Whiteley:1988p137},
but we include it here for completeness.

In the rest of this section, we give precise definitions of generic representations
of matroids and then prove our representation results for the $(2,2)$-sparsity
and $(2,0,2)$-graded-sparsity matroids.

\paragraph{The generic rank of a matrix.}  The matrices we define in this paper have
as their non-zero entries \emph{generic variables}, or formal polynomials over
$\mathbb{R}$ or $\mathbb{C}$ in generic variables.  We define
such a matrix $\vec M$ is to be a \emph{generic matrix}, and its \emph{generic rank}
is given by the largest number $r$
for which $\vec M$ has an $r\times r$ matrix minor with a determinant that is formally non-zero.

Let $\vec M$ be a generic matrix in $m$ generic variables $x_1,\ldots, x_m$, and
let $\vec v=(v_i)\in \mathbb{R}^m$ (or $\mathbb{C}^m$).  We define a \emph{realization $\vec M(\vec v)$ of $\vec M$} to be the matrix obtained by replacing the variable $x_i$ with the corresponding number $v_i$.  A
vector $\vec v$ is defined to be a \emph{generic point} if the rank of $\vec M(\vec v)$ is equal to the
generic rank of $\vec M$; otherwise $\vec v$ is defined to be a \emph{non-generic} point.

We will make extensive use of the following well-known facts from algebraic geomety (see, e.g., \cite{cox:little:oshea:iva:1997}):
\begin{itemize}
\item The rank of a generic matrix $\vec M$ in $m$ variables
is equal to the maximum over $\vec v\in \mathbb{R}^m$ ($\mathbb{C}^m$) of the rank of all
realizations $\vec M(\vec v)$.
\item The set of non-generic points of a generic matrix $\vec M$ is an
algebraic subset of $\mathbb{R}^m$ ($\mathbb{C}^m$).
\item The rank of a generic matrix $\vec M$ in $m$ variables is at least
as large as the rank of any specific realization $\vec M(\vec v)$.
\end{itemize}

\paragraph{Generic representations of matroids.}
A \emph{matroid} $\mathcal{M}$ on a ground set $E$ is a combinatorial structure
that captures properties of linear independence.  Matroids have many
equivalent definitions, which may be found in a monograph such as \cite{oxley:matroid}.  For our
purposes, the most convenient formulation is in terms of \emph{bases}: a matroid $\mathcal{M}$
on a finite ground set $E$ is presented by its bases $\mathcal{B}\subset 2^E$, which
satisfy the following properties:
\begin{itemize}
\item The set of bases $\mathcal{B}$ is not empty.
\item All elements $B\in \mathcal{B}$ have the same cardinality, which is the \emph{rank}
of $\mathcal{M}$.
\item For any two distinct bases $B_1,B_2\in \mathcal{B}$, there are elements
$e_1\in B_1-B_2$ and $e_2\in B_2$ such that $B_2+ \{ e_1 \}-\{ e_2\}\in \mathcal{B}$.
\end{itemize}

It is shown in \cite{pebblegame} that the set of $(2,2)$-graphs form the bases of a
matroid on the set of edges of $K_n^2$, the complete graph with edge multiplicity $2$.  In
\cite{graded} we proved that the set of looped-$(2,2)$ graphs forms a matroid
on the set of edges of $K_n^{2,2}$ a complete graph with edge multiplicity $2$ and $2$ distinct loops
on every vertex.

Let $\mathcal{M}$ be a matroid on ground set $E$.  We define a generic matrix $\vec M$ to be
a \emph{generic representation of $\mathcal{M}$} if:
\begin{itemize}
\item There is a bijection between the rows of $\vec M$ and the ground set $E$.
\item A subset of rows of $\vec M$ is attains the rank of the matrix $\vec M$
if and only if the corresponding subset of $E$ is a basis of $\mathcal{M}$.
\end{itemize}

With the definitions complete, we prove the results of this section.

\paragraph{Natural representation of spanning trees.}
We begin with a standard lemma, also employed by Whiteley \cite{Whiteley:1988p137},
about the  linear representability of the well-known spanning tree matroid.
\begin{figure}[htbp]
\centering
\includegraphics[width=0.48\textwidth]{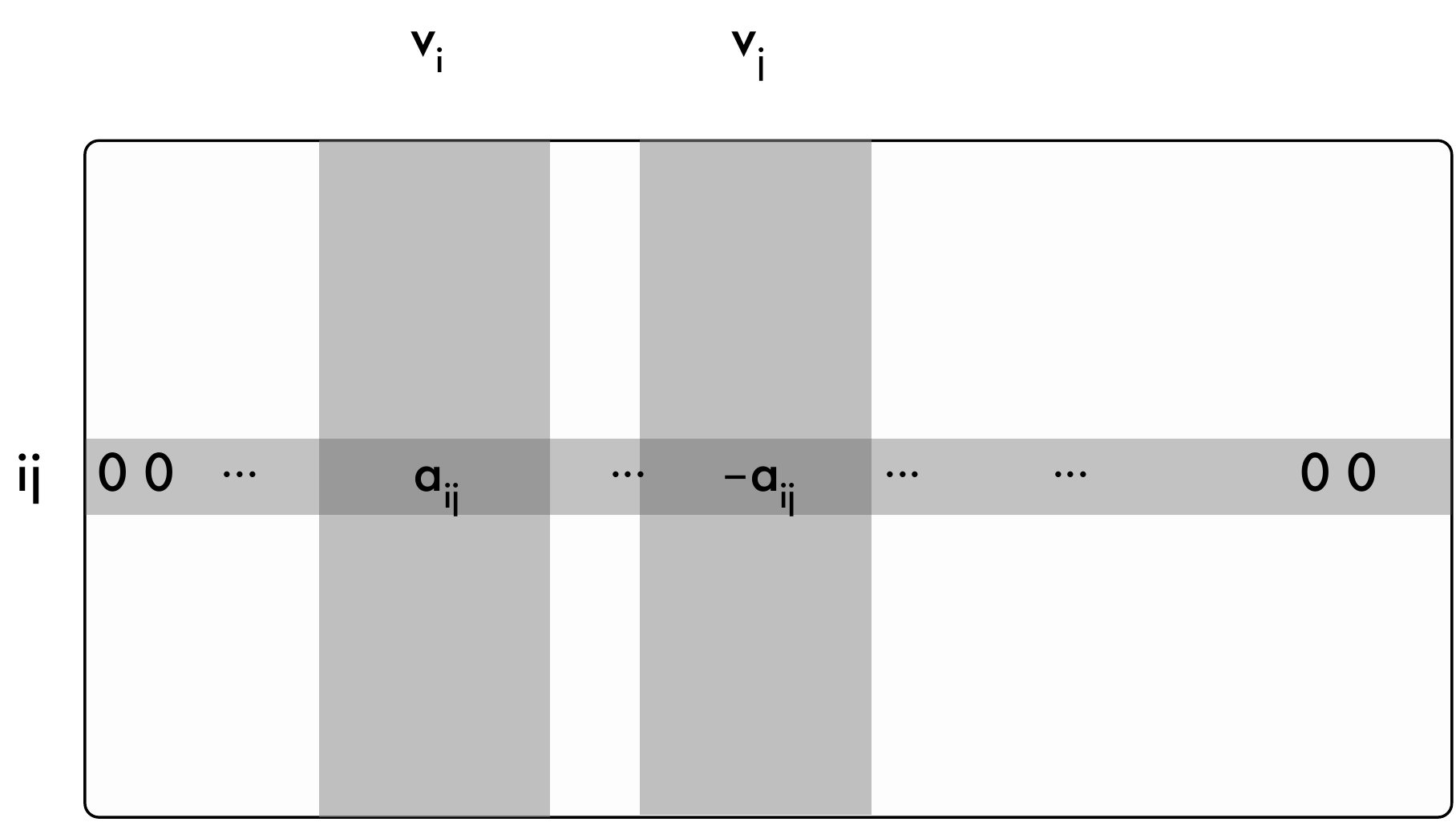}
\includegraphics[width=0.48\textwidth]{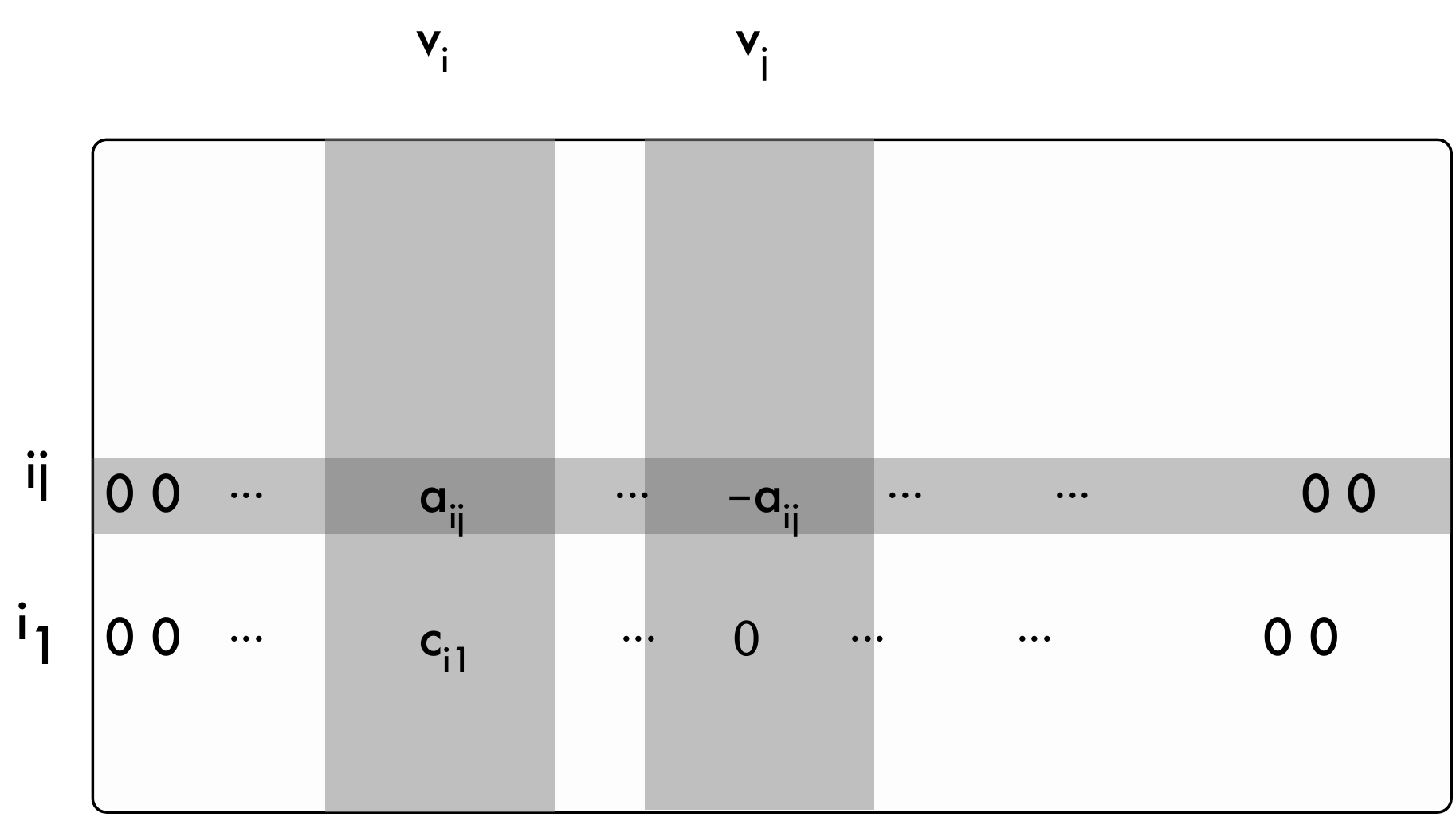}
\caption{The pattern of the matrices for trees and looped forests: (a) $\vec M_{1,1}(G)$; (b) $\vec M_{1,0,1}(G)$.}
\label{fig:m11}
\end{figure}

Let $G$ be a graph.
We define the matrix $\vec M_{1,1}(G)$ to have
one column for each vertex $i\in V$
and one row for each edge $ij\in E$.  The row $ij$ has zeros in
the columns not associated with $i$ or $j$, a generic variable $a_{ij}$
in the column for vertex $i$ and $-a_{ij}$ in the column
for vertex $j$.  \figref{m11}(a) illustrates the pattern.

We define $\vec M^{\bullet}_{1,1}(G)$ to be the matrix obtained from $\vec M_{1,1}(G)$
by dropping any column.  \lemref{treedet}  shows that the ambiguity of the column to
drop poses no problem for our purposes.

\begin{lemma}\lemlab{treedet}
Let $G$ be a graph on $n$ vertices and $m=n-1$ edges.  If $G$ is a tree, then
\[ \det\left(\vec M_{1,1}^{\bullet}(G)\right) = \pm\prod_{ij\in E(G)} a_{ij}. \]
Otherwise $\det\left(\vec M_{1,1}^{\bullet}(G)\right) = 0$.
\end{lemma}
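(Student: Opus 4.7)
The plan is to use multilinearity of the determinant to reduce to a purely combinatorial statement about the reduced signed incidence matrix, and then handle the two cases (tree vs.\ non-tree) by induction and a component argument respectively.

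First I would observe that each row of $\vec M_{1,1}^{\bullet}(G)$ is equal to the scalar $a_{ij}$ times a row whose entries lie in $\{0,+1,-1\}$ (namely, $+1$ in column $i$, $-1$ in column $j$, and $0$ elsewhere, intersected with the surviving columns). By multilinearity in the rows,
\[
\det\bigl(\vec M_{1,1}^{\bullet}(G)\bigr) \;=\; \Bigl(\prod_{ij\in E(G)} a_{ij}\Bigr)\cdot \det\bigl(\vec B^{\bullet}(G)\bigr),
\]
where $\vec B^{\bullet}(G)$ is the reduced signed vertex-edge incidence matrix of $G$ (with the same column removed). It therefore suffices to prove that $\det(\vec B^{\bullet}(G))=\pm 1$ when $G$ is a tree and $0$ otherwise.

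For the tree case, I would proceed by induction on $n$. The base case $n=1$ is vacuous ($0\times 0$ matrix, determinant $1$). For the inductive step, since a tree on $n\ge 2$ vertices has at least two leaves, I can pick a leaf $v$ distinct from the vertex $k$ whose column was removed. Let $vw$ be the unique edge at $v$. Then column $v$ of $\vec B^{\bullet}(G)$ has exactly one nonzero entry, $\pm 1$, in the row indexed by $vw$. Expanding the determinant along column $v$ collapses it to $\pm \det(\vec B^{\bullet}(G-v))$, since deleting column $v$ and row $vw$ from $\vec B^{\bullet}(G)$ precisely yields the reduced incidence matrix of the tree $G-v$ (with the column of $k$ still absent). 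By induction the latter equals $\pm 1$, completing the step.

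For the non-tree case, $G$ has $n-1$ edges but is not a tree, hence $G$ is disconnected. Choose a connected component $C$ of $G$ that does not contain the removed vertex $k$; such a $C$ exists because $G$ has $\ge 2$ components. Then the sum of the columns of $\vec B^{\bullet}(G)$ indexed by the vertices of $C$ is zero: every edge of $G$ with both endpoints in $C$ contributes $+1$ and $-1$ to this column sum and cancels, and no edge can have exactly one endpoint in $C$ (as $C$ is a component). This exhibits a nontrivial linear dependence among the columns of $\vec B^{\bullet}(G)$, forcing $\det(\vec B^{\bullet}(G))=0$.

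Nothing in this argument is technically subtle; the only point requiring a bit of care is ensuring in the inductive step that one can always find a leaf different from the deleted column $k$, which is immediate from the fact that trees on at least two vertices have at least two leaves. The rest is standard expansion and a dependency read off from connected components.
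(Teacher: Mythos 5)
Your proof is correct, and since the paper omits the argument entirely (citing Lovász's problem book), your factor-out-the-variables-and-reduce-to-the-signed-incidence-matrix argument is exactly the standard proof being referenced: multilinearity reduces the claim to $\det(\vec B^{\bullet}(G))=\pm 1$ for trees and $0$ otherwise, which you establish correctly by leaf expansion and by the column dependence over a component avoiding the deleted vertex. No gaps; the one point needing care (choosing a leaf distinct from the deleted column) is handled.
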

See \cite[solution to Problem 4.9]{lovasz:problems} for the proof.

\paragraph{Natural representation of looped forests.}
In the setting of looped graphs, the object corresponding
to a spanning tree is a forest in which every connected component
spans exactly one loop.  We define such a graph to be a \emph{looped forest}.
Looped forests are special cases of the \emph{map-graphs}
studied in our papers \cite{maps,colors,graded}, which develop their combinatorial and
matroidal properties.

Let $G$ be a looped graph and
define the matrix $\vec M_{1,0,1}(G)$ to have
one column for each vertex $i\in V$.  Each edge has a row
corresponding to it with the same pattern as in $\vec M_{1,1}(G)$.  Each loop
$i_j$ has a row corresponding to it with a variable $c_{i_j}$ in the column corresponding
to vertex $i$ and zeros elsewhere.  \figref{m11}(b) shows the pattern. \lemref{treedet} generalizes to the following.

\begin{lemma}\lemlab{mapdet}
Let $G$ be a looped graph on $n$ vertices and $c+m=n$ edges and loops.  If $G$ is a looped forest, then
\[ \det\left( \vec M_{1,0,1}(G)\right) = \pm\left(\prod_{\text{\rm edges}\, ij\in E(G)} a_{ij}\right)\cdot
\left( \prod_{\text{\rm loops}\, i_j\in E(G)} c_{i_j}\right) \]  Otherwise
$\det\left(\vec M_{1,0,1}^{\bullet}(G)\right) = 0$.
\end{lemma}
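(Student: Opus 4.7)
The plan is to exploit the block structure of $\vec{M}_{1,0,1}(G)$ induced by the connected components of $G$ and then reduce each block to the spanning-tree case already handled in \lemref{treedet}. Every row of $\vec{M}_{1,0,1}(G)$ has its support in the columns indexed by vertices of a single connected component, so after reordering rows by component, the $n$ columns associated with any component $G_s$ (on $n_s$ vertices, with $m_s$ edges and $c_s$ loops) receive non-zero entries only from the $m_s+c_s$ rows indexed by edges and loops of $G_s$.

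First I would handle the size mismatch. Since $\sum_s n_s = n = \sum_s (m_s+c_s)$, if some component has $m_s+c_s < n_s$, then its $n_s$ columns are supported on fewer than $n_s$ rows, so they are linearly dependent and the determinant vanishes; in the complementary case where every component satisfies $m_s+c_s = n_s$, the matrix is block diagonal and its determinant factors as $\prod_s \det(\vec{M}_{1,0,1}(G_s))$. This reduces the lemma to analyzing a single connected component $G_s$ with $m_s+c_s=n_s$.

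Within such a component I would argue that loops carry no connectivity, so the edge subgraph on $V(G_s)$ must already be connected, giving $m_s\ge n_s-1$ and hence $c_s\le 1$. If $c_s=0$, then every row of the block has the form $(0,\ldots,a_{ij},\ldots,-a_{ij},\ldots,0)$ and thus sums to zero; the all-ones vector lies in the right null space, so the block determinant is $0$ and $G$ cannot be a looped forest in this case. If $c_s=1$, then $G_s$ is a spanning tree $T_s$ together with a single loop $v_w$ at some vertex $v$; expanding the determinant along the loop row (which has a single entry $c_{v_w}$ in the column of $v$) yields $\pm c_{v_w}\det(\vec{M}_{1,1}^\bullet(T_s))$, where the missing column is that of $v$. \lemref{treedet} now gives $\pm c_{v_w}\prod_{ij\in E(T_s)} a_{ij}$. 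Multiplying over components produces the stated monomial when $G$ is a looped forest and zero otherwise.

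The only step needing care is the non-square block argument: one must explicitly check that short blocks collapse the rank of their columns before invoking per-component determinants, because otherwise the block-diagonal formula is not available. Once this is pinned down, everything else is bookkeeping and a single cofactor expansion per component.
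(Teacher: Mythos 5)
Your proof is correct and follows essentially the same route as the paper's: block-diagonalize $\vec M_{1,0,1}(G)$ by connected components, expand each block along its loop row, and invoke \lemref{treedet}. The only difference is cosmetic: in the degenerate case the paper exhibits a vertex-induced subgraph whose rows outnumber its non-zero columns, whereas you argue per component (undersized blocks, or a loop-free component killed by the all-ones null vector); both are sound.
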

\begin{proof}
By the hypothesis of the lemma, $ \vec M_{1,0,1}(G)$ is $n\times n$, so its determinant is well-defined.

If $G$ is not a looped forest, then it has a vertex-induced subgraph $G'$ on $n'$ vertices spanning
at least $n'+1$ edged and loops.  The sub-matrix induced by the rows corresponding to edges and loops in $G'$ has at least $n'+1$ rows by at most $n'$ columns that are not all zero.

If $G$ is a looped forest then $\vec M_{1,0,1}(G)$ can be arranged to have a block diagonal
structure.  Partition the vertices according to the $k\ge 1$ connected components $G_1, G_2,\ldots, G_k$
and arrange the columns so that $V(G_1), V(G_2), \ldots, V(G_k)$ appear in order.  Then arrange the
rows so that the $E(G_i)$ also appear in order.  Thus the lemma follows from proving that if $G$ is a tree
with a loop on vertex $i$ we have
\[
\det\left( \vec M_{1,0,1}(G)\right) = \pm c_{i_1}\cdot \left(\prod_{\text{edges}\, ij\in E(G)} a_{ij}\right)
\]
since we can multiply the determinants of the sub-matrices corresponding to the connected components.

To complete the proof, we expand the determinant along the row corresponding to the loop $i_1$.  Since it
has one non-zero entry, we have
\[
\det\left( \vec M_{1,0,1}(G)\right) = \pm c_{i_1} \det\left( \vec M_{1,0,1}(G)[A,B]\right)
\]
where $A$ is the set of rows correspond to the $n-1$ edges of $G$ and $B$ is the set of columns
corresponding to all the vertices of $G$ except for $i$.  Since $\vec M_{1,0,1}(G)[A,B]$ has the
same form at $\vec M_{1,1}^{\bullet}(G-\{ i_j \})$ the claimed determinant formula follows from
\lemref{treedet}.
\end{proof}

\paragraph{The $(2,2)$-sparsity-matroid.}
Let $G$ be a graph.  We define the matrix $\vec M_{2,2}(G)$ to have
two columns for each vertex $i\in V$
and one row for each edge $ij\in E$.  The row $ij$ has zeros in
the columns not associated with $i$ or $J$, variables $(a_{ij},b_{ij})$
in the columns for vertex $i$ and $(-a_{ij},-b_{ij})$ in the columns
for vertex $j$.  \figref{m22} illustrates the pattern.

\begin{figure}[htbp]
\centering
\includegraphics[width=0.48\textwidth]{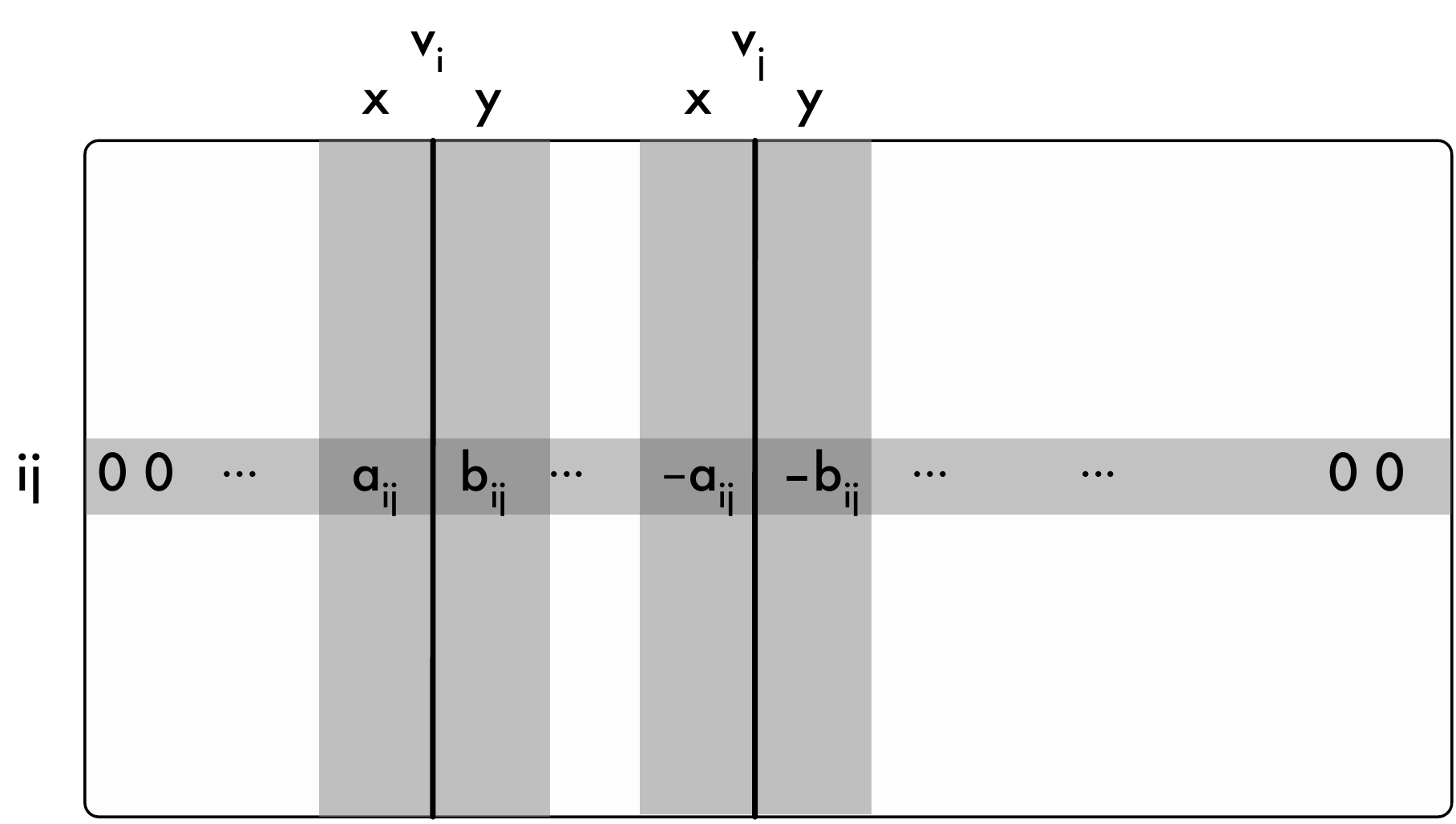}
\includegraphics[width=0.48\textwidth]{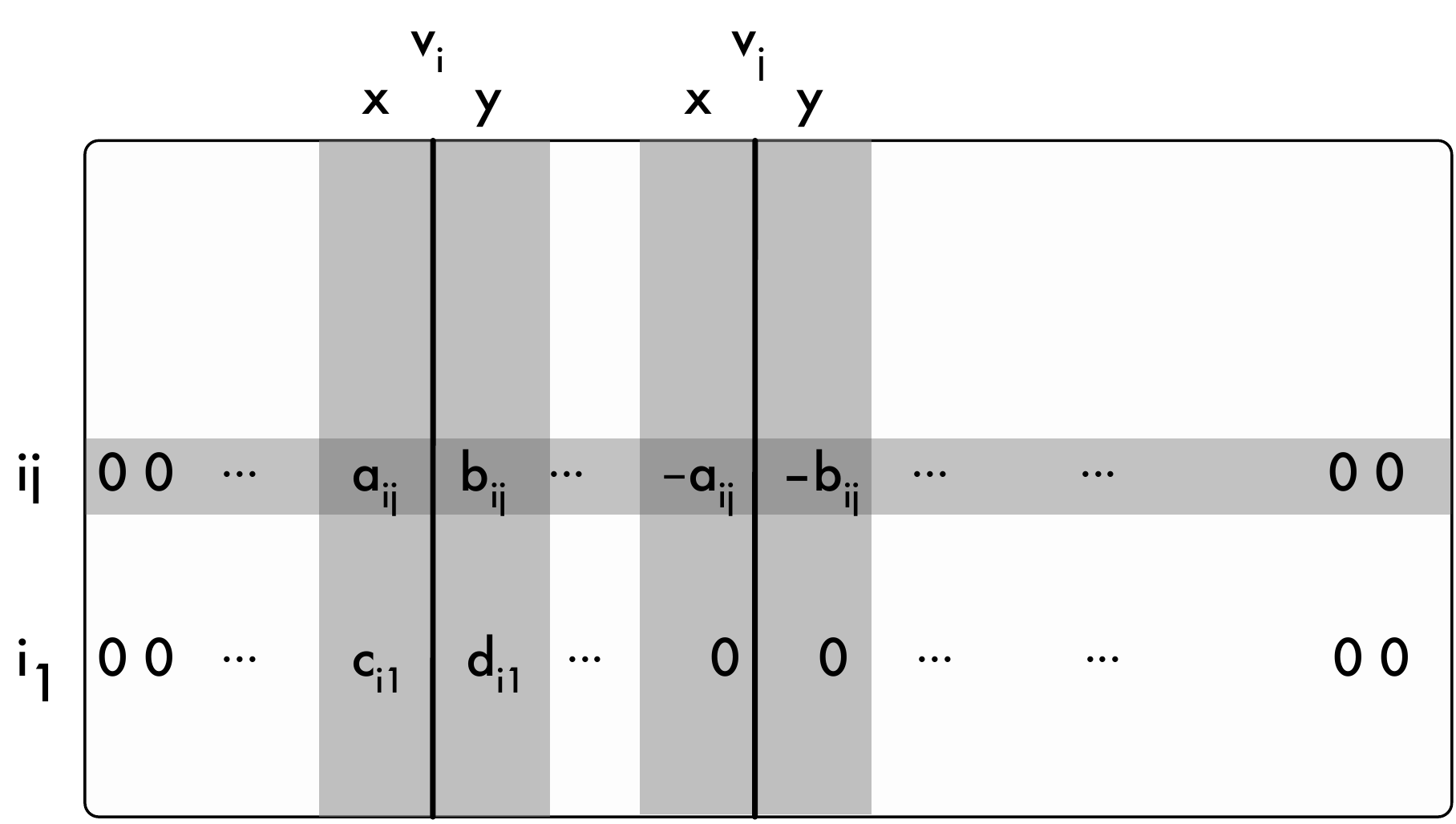}

\caption{The pattern of the matrices for $(2,2)$-graphs and looped-$(2,2)$ graphs: (a) $\vec M_{2,2}(G)$; (b) $\vec M_{2,0,2}(G)$.}
\label{fig:m22}
\end{figure}
\begin{lemma}[Whiteley \cite{Whiteley:1988p137}]\lemlab{22matrix}
The matrix $\vec M_{2,2}(K_n^2)$ is
a generic representation of the $(2,2)$-sparsity matroid.
\end{lemma}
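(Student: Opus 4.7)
The plan is to characterize which edge sets $E'\subset E(K_n^2)$ give linearly independent row sets in $\vec M_{2,2}(K_n^2)$ and to identify them with the $(2,2)$-sparse edge sets; once this equivalence is in hand, the matroid represented by $\vec M_{2,2}(K_n^2)$ is by definition the $(2,2)$-sparsity matroid.

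For the necessity direction I would use a direct kernel argument. Fix any vertex subset $V'\subset V$ of size $n'$, and consider the sub-matrix whose rows are indexed by edges with both endpoints in $V'$. By the pattern shown in \figref{m22}(a), these rows vanish outside the $2n'$ columns associated with $V'$, and in each such row the entries $+a_{ij}$ and $-a_{ij}$ in the odd-indexed columns sum to zero (and similarly for the entries in the even-indexed columns). Consequently the indicator vectors of the ``$a$''-columns and of the ``$b$''-columns of $V'$ both lie in the right-kernel of the sub-matrix, which bounds its rank by $2n'-2$ uniformly over every specialization. Thus any $E'$ whose induced graph fails the $(2,2)$-sparsity condition is linearly dependent in $\vec M_{2,2}(K_n^2)$.

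For the sufficiency direction it suffices to show that when $G$ is a $(2,2)$-tight graph on $n$ vertices with $2n-2$ edges, the rows of $\vec M_{2,2}(G)$ are generically independent; any $(2,2)$-sparse subset of $E(K_n^2)$ extends to such a tight subgraph (see \cite{pebblegame}) and restriction preserves linear independence. The key algebraic move is to pair the $(2,2)$-tight hypothesis with the Nash-Williams/Tutte theorem: the sparsity inequality $m'\le 2(n'-1)$ on every subgraph is precisely the criterion for $E(G)$ to decompose into two edge-disjoint spanning trees $E(G)=E(T_1)\sqcup E(T_2)$. Given such a decomposition I would specialize $\vec M_{2,2}(G)$ by setting $b_{ij}=0$ for every $ij\in E(T_1)$ and $a_{ij}=0$ for every $ij\in E(T_2)$. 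After this substitution the $T_1$-rows are supported in the ``$a$''-columns and the $T_2$-rows in the ``$b$''-columns, so the matrix becomes block-diagonal with blocks $\vec M_{1,1}(T_1)$ and $\vec M_{1,1}(T_2)$. Applying \lemref{treedet} to each block gives generic rank $n-1$, so the specialization has rank $2n-2$; since generic rank does not decrease upon passing from a specialization back to the generic matrix, the generic rank of $\vec M_{2,2}(G)$ is $2n-2$ as well, and its rows are independent.

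The main obstacle I expect is precisely locating this algebraic specialization: it is the two-spanning-tree decomposition that decouples the $a$-variables from the $b$-variables and reduces the rank computation to two independent applications of \lemref{treedet}. Without such a decoupling one would have to analyze $\det\vec M_{2,2}$ via a Cauchy--Binet-style expansion over column selections, where each edge may be matched to any of four columns $\{a_i,b_i,a_j,b_j\}$; that expansion does not simplify as cleanly as the tree-determinant formula, and the matroid partition theorem is what lets us avoid it.
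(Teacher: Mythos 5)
Your proposal is correct, and it is more self-contained than what the paper does: the paper does not prove \lemref{22matrix} at all, but cites Whiteley \cite{Whiteley:1988p137} with the remark that the argument is ``essentially the same'' as the proof of the Matroid Union Theorem for linearly representable matroids. Your two halves are both sound. The kernel argument for necessity is exactly right: the two vectors that are constantly $1$ on the $a$-columns (resp.\ $b$-columns) of $V'$ and zero elsewhere annihilate every row induced by $V'$, so those rows have rank at most $2n'-2$ in every specialization. For sufficiency, the Nash-Williams--Tutte decomposition of a $(2,2)$-tight graph into two edge-disjoint spanning trees, followed by the specialization $b_{ij}=0$ on $T_1$ and $a_{ij}=0$ on $T_2$, does block-diagonalize the matrix into two copies of the $\vec M_{1,1}$ pattern, and \lemref{treedet} plus the fact that generic rank can only drop under specialization finishes the job; the extension of an arbitrary sparse set to a tight spanning subgraph is justified by the matroid property from \cite{pebblegame}. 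It is worth noting how this compares to the proof the paper \emph{does} write out for the looped analogue, \lemref{202matrix}: there the authors expand $\det \vec M_{2,0,2}(G)$ by a generalized Laplace expansion over the $a$-columns, observe that the nonzero terms are distinct multilinear monomials indexed by decompositions into two looped forests (via \lemref{mapdet}), and conclude that no generic cancellation occurs. That determinantal route delivers both directions of the equivalence in one computation, whereas your route splits the work into a kernel bound and a single witnessing specialization; the trade-off is that the Laplace expansion needs the no-cancellation observation, while yours needs the separate necessity argument. Either way, the decomposition into two spanning trees (or looped forests) is the combinatorial engine, so your proof is a legitimate and arguably more elementary stand-in for the citation.
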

The proof, which can be found in \cite{Whiteley:1988p137},
is essentially the same as that used to prove the Matroid Union Theorem
for linearly representable matroids (e.g., \cite[Prop. 7.6.14]{brylawski:constructionsSurvey}).

\paragraph{The $(2,0,2)$-graded-sparsity matroid.}
Let $G$ be a looped graph and define the matrix $\vec M_{2,0,2}(G)$ to have two columns for
each vertex, one row for each edge or self-loop.  The rows for the edges are the same as
in $\vec M_{2,2}(G)$.  The row for a self-loop $i_{j}$ (the $j$th loop on vertex $i$)
has variables $(c_{i_{j}},d_{i_{j}})$ in the columns for vertex $i$ and zeros elsewhere.
(See \figref{m22}(b).)

\begin{lemma}\lemlab{202matrix}
The matrix $\vec M_{2,0,2}(K_2^{2,2})$ is a generic representation of
the $(2,0,2)$-graded-sparsity-matroid.
\end{lemma}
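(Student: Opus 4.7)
The plan mirrors Whiteley's proof of Lemma \ref{lemma:22matrix} for the $(2,2)$-sparsity matroid, extended to handle loops, and reduces the hard direction to the representability of the looped forest matroid already established in Lemma \ref{lemma:mapdet}.

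First, I would verify the ``only if'' direction: if a set $S$ of rows of $\vec M_{2,0,2}(K_n^{2,2})$ does not correspond to a $(2,0,2)$-graded-sparse subgraph, then those rows are linearly dependent for every substitution of the variables. The argument splits by the type of violation. If a sub-collection $S'\subseteq S$ contains only edges, spans some set $V'$ of $n'$ vertices, and has $|S'|\ge 2n'-1$, dependence follows because the two translation vectors $\sum_{i\in V'}\vec{e}_{2i-1}$ and $\sum_{i\in V'}\vec{e}_{2i}$ lie in the orthogonal complement of every edge row (each such row has $(a_{ij},b_{ij})$ at $i$ and $(-a_{ij},-b_{ij})$ at $j$), so the span of these edge rows has dimension at most $2n'-2$. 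If instead $S$ contains at least one loop and spans $n'$ vertices with $|S|\ge 2n'+1$, a column count gives dependence immediately: all nonzero entries of these rows lie in the $2n'$ columns indexed by the spanned vertices.

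For the ``if'' direction, suppose $G$ is $(2,0,2)$-tight on $n$ vertices, so $m+c=2n$ and $\vec M_{2,0,2}(G)$ is square of size $2n\times 2n$. The goal is to exhibit a specific substitution for the variables that makes this matrix nonsingular; by the algebraic facts about generic matrices recalled earlier in the section, this forces the generic rank to equal $2n$. I would first invoke the matroid-partition decomposition of $(2,0,2)$-tight graphs into two edge-disjoint spanning looped forests $F_1$ and $F_2$, each with exactly $n$ edges and loops, which follows from the matroidal structure of looped forests developed in \cite{graded,maps,colors} together with Edmonds's matroid union theorem. In the specialization, I would set $b_{ij}=0$ and $d_{i_k}=0$ for each edge or loop of $F_1$, set $a_{ij}=0$ and $c_{i_k}=0$ for each edge or loop of $F_2$, and leave the remaining variables free. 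After reordering the columns so that the $x$-column of every vertex precedes its $y$-column, and reordering the rows so that $F_1$-rows come first, the specialized matrix becomes block-diagonal with two $n\times n$ diagonal blocks equal to $\vec M_{1,0,1}(F_1)$ and $\vec M_{1,0,1}(F_2)$. By Lemma \ref{lemma:mapdet}, the determinant of each block is a nonzero monomial in the surviving variables, so the full $2n\times 2n$ specialization is nonsingular.

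The main obstacle is the decomposition step: every $(2,0,2)$-tight graph must split as the edge-disjoint union of two spanning looped forests. This is the matroid-union analogue of Nash-Williams's spanning tree decomposition theorem, now applied to the looped forest matroid; once one has that looped forests form a matroid (cf.\ \cite{maps,colors}), the decomposition follows from Edmonds's matroid partition theorem and a rank-count matching the $2n$ elements of $G$ against two copies of a rank-$n$ matroid. The rest of the argument is then a routine alignment of rows and columns followed by an appeal to Lemma \ref{lemma:mapdet}.
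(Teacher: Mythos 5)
Your proof is correct, and it rests on the same two pillars as the paper's: Lemma \ref{lemma:mapdet} and the fact (from \cite{graded,colors}) that a looped-$(2,2)$ graph is exactly one that decomposes into two edge-disjoint spanning looped forests. Where you differ is the linear-algebra mechanism for extracting the rank. The paper expands $\det\left(\vec M_{2,0,2}(G)\right)$ by the generalized Laplace expansion over the $x$-columns, recognizes each term as a product of two $\vec M_{1,0,1}$ determinants, and observes that the surviving terms are distinct multilinear monomials indexed by two-looped-forest decompositions, so they cannot cancel; this gives both directions at once (the determinant is generically nonzero if and only if a decomposition exists). You instead fix one decomposition $F_1\cup F_2$ and specialize variables to make the matrix block-diagonal with blocks $\vec M_{1,0,1}(F_1)$ and $\vec M_{1,0,1}(F_2)$ -- the classical representation proof of the Matroid Union Theorem, which the paper itself invokes (via Whiteley) for the unlooped Lemma \ref{lemma:22matrix}. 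Your route buys you freedom from the non-cancellation argument, at the cost of needing the decomposition as an input and a separate direct argument for dependence of non-sparse row sets; your count-based argument for that direction (translation vectors killing edge-only rows, column counting for loop-containing rows) is fine. In effect your specialization isolates a single nonzero term of the paper's Laplace expansion, so the two proofs are close cousins; both are complete.
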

\begin{proof}
We need to show that
if $G$ has $n$ vertices, and $m+c=2n$ edges and loops, then the generic rank of
$\vec M_{2,0,2}(G)$ is $2n$ if and only if $G$ is a looped-$(2,2)$ graph.

Since $\vec M_{2,0,2}(G)$ is square, we expand the determinant around the $a_\cdot$
columns with the generalized Laplace expansion to get:
\[
\sum \pm \det\left(\vec M_{2,0,2}(G)[A,X]\right)\cdot \det\left(\vec M_{2,0,2}(G)[B,Y]\right)
\]
where the sum is over all complementary sets of $n$ rows $X$ and $Y$.  Since each smaller
determinant has the form of $\vec M_{1,0,1}(G)$ from \lemref{mapdet}, the sum has
a non-zero term if and only if $G$ is the edge-disjoint union of two looped forests.
Any non-zero term is a multilinear monomial that cannot generically cancel with any
of the others, implying that the generic rank of $\vec M_{2,0,2}(G)$ is $2n$ if and only
if $G$ is the disjoint union of two looped forests.

The lemma then follows from the main theorems of our papers \cite{colors,graded},
which show that $G$ admits such a decomposition if and only if $G$ is looped-$(2,2)$.
\end{proof}

\section{Direction network realization}\seclab{parallel}
A \emph{direction network} $(G,\vec d)$ is a graph $G$ together with an
assignment of a direction vector $\vec d_{ij}\in \Re^2$ to each edge.
The \emph{direction network realizability problem} \cite{Whiteley:1989p992}
is to find a realization $G(\vec p)$ of a direction network $(G,\vec d)$.

A \emph{realization} \label{realizationdef} $G(\vec p)$ of a direction network is an embedding of $G$
onto a point set $\vec p$ such that $\vec p_i-\vec p_j$ is in the direction
$\vec d_{ij}$.
In a realization $G(\vec p)$ of a direction network $(G,\vec d)$, an edge $ij$ is \emph{collapsed} if $\vec p_i=\vec p_j$.  A  realization is collapsed if all the $\vec p_i$ are the same.  A realization is \emph{faithful}\footnote{Whiteley \cite{Whiteley:1989p992} calls this condition ``proper.''} if $ij\in E$ implies that $\vec p_i\neq \vec p_j$.  In other words, a faithful parallel realization has no collapsed edges.

In this section, in preparation for the main result, we give a new derivation of the Parallel Redrawing Theorem
of Whiteley. 

\parallelthm

\paragraph{Roadmap.} Here is an outline of the proof.
\begin{itemize}
\item We formally define the direction network realization problem
as a linear system $\vec P(G,\vec d)$ and prove that its generic rank
is equivalent to that of $\vec M_{2,2}(G)$.  (\lemref{matrixparallel} and \lemref{parallelrank}.)
\item  We show that if a
solution to the realization problem $\vec P(G,\vec d)$
collapses an edge $vw$, the solution space is equivalent
to the solution space of $\vec P_{vw}(G,\vec d)$, a linear
system in which $\vec p_v$ is replaced with $\vec p_w$. The
combinatorial interpretation of this algebraic result is that the
realizations of $(G/vw,\vec d)$ are in bijective correspondence with those of
$(G,\vec d)$. (\lemref{contractreplace} and \corref{collapsecontract}.)
\item We then state and prove a \emph{genericity condition} for direction
networks $(G,\vec d)$ where $G$ is $(2,2)$-sparse and has $2n-3$ edges:
the set of $\vec d$ such that $(G,\vec d)$ and all contracted networks $(G/ij,\vec d)$
is open and dense in $\mathbb{R}^{2m}$. (\lemref{parallelgeneric}.)
\item The final step in the proof is to show that for a Laman graph,
if there is a collapsed edge in a generic realization, then the whole
realization is collapsed by the previous steps and obtain a contradiction. (Proof of \theoref{parallel}.)
\end{itemize}

\subsection{Direction network realization as a linear system}
Let $(G,\vec d)$ be a direction network.  We define the linear system
$\vec P(G,\vec d)$ to be
\begin{eqnarray}
\iprod{\vec p_i-\vec p_j}{\vec d_{ij}^\perp} = 0 & \text{for all $ij\in E$}\eqlab{edges}
\end{eqnarray}
where the $\vec p_i$ are the unknowns.  From the definition of a realization (p. \pageref{realizationdef}, above the statement of \theoref{parallel}), every
realization $G(\vec p)$ of $(G,\vec d)$, $\vec p$ is a solution of $\vec P(G,\vec d)$.

If the entries of $\vec d$ are generic variables, then the solutions to $\vec P(G,\vec d)$
are polynomials in the entries of $\vec d$.  We start by describing  $\vec P(G,\vec d)$
in matrix form.

\begin{lemma}\lemlab{matrixparallel}
Let $(G,\vec d)$ be a direction network.  Then the solutions $\vec p$ of
the system $\vec P(G,\vec d)$ are solutions to the matrix equation
\[
\vec M_{2,2}(G)\vec p = \vec 0
\]
\end{lemma}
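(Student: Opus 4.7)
The plan is to verify the lemma by direct inspection, row by row, matching each edge-equation in $\vec P(G,\vec d)$ with the corresponding row of $\vec M_{2,2}(G)\vec p$ under an identification of variables.

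First I would fix notation: write $\vec p_i = (x_i, y_i)$ for each vertex, and write $\vec d_{ij} = (\alpha_{ij}, \beta_{ij})$ for the direction assigned to edge $ij$, so that $\vec d_{ij}^{\perp} = (-\beta_{ij}, \alpha_{ij})$. Expanding the inner product in \eqref{edges}, the equation for edge $ij$ becomes
\[
-\beta_{ij}(x_i - x_j) + \alpha_{ij}(y_i - y_j) = 0.
\]
On the other hand, by the definition of $\vec M_{2,2}(G)$ (see \figref{m22}(a)), the row of $\vec M_{2,2}(G)\vec p$ corresponding to edge $ij$ is
\[
a_{ij}(x_i - x_j) + b_{ij}(y_i - y_j) = 0,
\]
where $(a_{ij}, b_{ij})$ is the generic variable pair attached to edge $ij$.

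Next I would make the identification $(a_{ij}, b_{ij}) = (-\beta_{ij}, \alpha_{ij}) = \vec d_{ij}^{\perp}$. Under this relabeling the two displayed equations are literally identical, and since the ``zero'' entries in both systems occur in exactly the same columns (those not indexed by $i$ or $j$), the full systems $\vec P(G,\vec d)$ and $\vec M_{2,2}(G)\vec p = \vec 0$ coincide row by row. Hence their solution sets in the unknowns $\vec p$ coincide.

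There is no serious obstacle here; the lemma is essentially a bookkeeping statement that records the fact that the entries of $\vec M_{2,2}(G)$, which appear as abstract generic variables, may be specialized to the coordinates of $\vec d_{ij}^{\perp}$. The only thing to be careful about is the sign convention for the perpendicular and the consistent placement of signs in the $i$ versus $j$ blocks of row $ij$, both of which are handled uniformly across all edges and therefore preserve the linear system.
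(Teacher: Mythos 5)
Your proof is correct and follows essentially the same route as the paper: the paper's proof simply invokes bilinearity to rewrite $\iprod{\vec p_i-\vec p_j}{\vec d_{ij}^\perp}$ as $\iprod{\vec p_i}{\vec d_{ij}^\perp}+\iprod{\vec p_j}{-\vec d_{ij}^\perp}$ and observes this matches the pattern of $\vec M_{2,2}(G)$, which is exactly your row-by-row identification $(a_{ij},b_{ij})=\vec d_{ij}^{\perp}$ written out in coordinates. Your explicit remark that the matrix entries are the coordinates of the \emph{perpendicular} direction is a correct and slightly more careful reading than the paper's terse statement, but it is the same argument.
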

\begin{proof}
Bilinearity of the inner product implies that \eqref{edges} is equivalent to
\[
\iprod{\vec p_i}{\vec d_{ij}^\perp} + \iprod{\vec p_j}{-\vec d_{ij}^\perp} = 0
\]
which in matrix form is $\vec M_{2,2}(G)$.
\end{proof}

The matrix form of $\vec P(G,\vec d)$ leads to an immediate connection to
the $(2,2)$-sparsity-matroid.

\begin{lemma}\lemlab{parallelrank}
Let $G$ be a graph on $n$ vertices with $m\le 2n-2$ edges.
The generic rank of $\vec P(G,\vec d)$
(with the $2n$ variables in $\vec p=(\vec p_1,\cdots, \vec p_n)$ as the unknowns) is $m$ if and only if $G$ is
$(2,2)$-sparse.  In particular, the rank is $2n-2$ if and only if $G$ is a $(2,2)$-graph.
\end{lemma}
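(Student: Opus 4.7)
The plan is to derive this lemma directly from the two results already assembled: the matrix description of $\vec P(G,\vec d)$ from \lemref{matrixparallel}, and the matroid representability result \lemref{22matrix}. Since the lemma is a statement about the generic rank of a linear system whose coefficient matrix has already been identified as $\vec M_{2,2}(G)$, the essential content is to translate the matroidal notion of independence into a statement about row rank and then read off the two cases.

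First I would observe that, by \lemref{matrixparallel}, the system $\vec P(G,\vec d)$ has the same rank as the $m\times 2n$ matrix $\vec M_{2,2}(G)$, regarded as a generic matrix in the variables $a_{ij},b_{ij}$. By \lemref{22matrix}, $\vec M_{2,2}(K_n^2)$ is a generic representation of the $(2,2)$-sparsity matroid, so a subset of its rows attains the rank of the ambient matrix if and only if the corresponding edge set is independent in that matroid, i.e., is $(2,2)$-sparse. Restricting the matroid to the edge set $E(G)$ and noting that $\vec M_{2,2}(G)$ is simply the row submatrix indexed by $E(G)$, this says exactly that the $m$ rows of $\vec M_{2,2}(G)$ are linearly independent if and only if $G$ itself is $(2,2)$-sparse. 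Combined with the bound $m\le 2n-2$, which guarantees independence is possible, the generic rank of $\vec P(G,\vec d)$ equals $m$ if and only if $G$ is $(2,2)$-sparse.

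For the ``in particular'' clause, I would just specialize $m = 2n-2$. If $G$ is a $(2,2)$-graph, then it is $(2,2)$-sparse with $m = 2n-2$, so by the first part the generic rank of $\vec P(G,\vec d)$ is $2n-2$. Conversely, if the generic rank equals $2n-2$, then since the rank is bounded above by the number of rows $m \le 2n-2$, we must have $m = 2n-2$, and moreover all rows must be independent, which by the first part forces $G$ to be $(2,2)$-sparse, hence a $(2,2)$-graph.

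I do not expect any real obstacle here: the combinatorial work has been done in \lemref{22matrix}, and the present lemma is essentially a rephrasing in the language of the linear system $\vec P(G,\vec d)$. The only small point to be careful about is distinguishing between the full matrix $\vec M_{2,2}(K_n^2)$ appearing in the matroid representation statement and the submatrix $\vec M_{2,2}(G)$ appearing in $\vec P(G,\vec d)$; this is handled by recognizing that the latter is obtained from the former by selecting the rows indexed by $E(G)$, and that generic independence of a row subset in the representing matrix corresponds to independence of the corresponding set in the matroid.
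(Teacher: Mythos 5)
Your proposal is correct and follows exactly the paper's own (one-line) proof, which simply applies \lemref{matrixparallel} and then \lemref{22matrix}; you have merely spelled out the translation between matroid independence of a row subset of $\vec M_{2,2}(K_n^2)$ and the rank of the submatrix $\vec M_{2,2}(G)$, which is the intended content. No issues.
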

\begin{proof}
Apply \lemref{matrixparallel} and then \lemref{22matrix}.
\end{proof}

An immediate consequence of \lemref{parallelrank} that we will use frequently is the following.
\begin{lemma}\lemlab{genericdirections}
Let $G$ be $(2,2)$-sparse.  Then the set of edge direction assignments
$\vec d\in \mathbb{R}^{2m}$ such that
the direction network realization system $\vec P(G,\vec d)$ has rank
$m$ is the (open, dense) complement of an algebraic subset of $\mathbb{R}^{2m}$.
\end{lemma}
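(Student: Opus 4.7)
The plan is to derive this directly from \lemref{parallelrank} together with standard facts about generic matrices recalled earlier in \secref{natural}. By \lemref{matrixparallel}, the solutions of $\vec P(G,\vec d)$ are exactly the kernel vectors of the matrix $\vec M_{2,2}(G)$, whose rank equals $m$ (the number of rows) precisely when the rows are linearly independent. So I need to show that the set of $\vec d \in \mathbb{R}^{2m}$ for which the rows of $\vec M_{2,2}(G)$ are dependent is an algebraic subset.

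First I would invoke \lemref{parallelrank}: since $G$ is $(2,2)$-sparse with $m \le 2n-2$ edges, the generic rank of $\vec M_{2,2}(G)$ is exactly $m$. By definition of generic rank, this means there is some choice of $m$ columns indexing an $m \times m$ submatrix whose determinant is a formally nonzero polynomial in the entries $a_{ij}, b_{ij}$ of $\vec d$.

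Next I would observe that a specific realization $\vec M_{2,2}(G)(\vec d)$ has rank strictly less than $m$ if and only if every $m \times m$ minor of $\vec M_{2,2}(G)$ vanishes when evaluated at $\vec d$. Each such minor is a polynomial in the $2m$ variables comprising $\vec d$, so the ``bad'' locus
\[
Z = \bigl\{\, \vec d \in \mathbb{R}^{2m} : \operatorname{rank} \vec M_{2,2}(G)(\vec d) < m \,\bigr\}
\]
is the common vanishing set of this finite collection of polynomials, hence an algebraic subset of $\mathbb{R}^{2m}$. By the first step, at least one of these defining polynomials is not identically zero, so $Z$ is a \emph{proper} algebraic subset; its complement is therefore open and dense in the Zariski (and hence Euclidean) topology on $\mathbb{R}^{2m}$, which is precisely the claim.

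There is no real obstacle here: the lemma is essentially a translation of the generic-rank statement of \lemref{parallelrank} into the standard language of algebraic sets recalled in \secref{natural}, and the only thing one must check is that a formally nonzero $m \times m$ minor exists, which is exactly the content of the hypothesis $m \le 2n-2$ together with $(2,2)$-sparsity via \lemref{22matrix}.
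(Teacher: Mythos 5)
Your proof is correct and follows essentially the same route as the paper: both invoke \lemref{parallelrank} to get a formally nonzero $m\times m$ minor and then identify the rank-deficient locus as the common zero set of the $m\times m$ minors of $\vec M_{2,2}(G)$, a proper algebraic subset whose complement is open and dense. Your write-up simply makes the standard algebraic-geometry facts from \secref{natural} more explicit.
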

\begin{proof}
By \lemref{parallelrank} any $\vec d\in \mathbb{R}^{2m}$ for which the rank drops is a
common zero of the $m\times m$ minors of the generic matrix
$\vec M_{2,2}(G)$, which are polynomials.
\end{proof}

Because of \lemref{genericdirections}, when we work with $\vec P(G,\vec d)$ as a system with numerical
directions, we may select directions $\vec d\in \mathbb{R}^{2m}$ such that $\vec P(G,\vec d)$ has full rank when $G$ is $(2,2)$-sparse.  We use this fact repeatedly below.

\paragraph{Translation invariance of $\vec P(G,\vec d)$.}
Another simple property is that solutions to the system $\vec P(G,\vec d)$ are preserved
by translation.
\begin{lemma}\lemlab{translation}
The space of solutions to the system $\vec P(G,\vec d)$ is preserved by translation.
\end{lemma}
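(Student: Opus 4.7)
The plan is to verify translation-invariance directly from the defining equations. Each equation in $\vec P(G,\vec d)$ has the form $\iprod{\vec p_i - \vec p_j}{\vec d_{ij}^\perp} = 0$ and depends on the configuration $\vec p$ only through the edge difference $\vec p_i - \vec p_j$. If $\vec t\in\mathbb{R}^2$ is any translation vector and we set $\vec p_i' = \vec p_i + \vec t$ for all $i$, then $\vec p_i' - \vec p_j' = \vec p_i - \vec p_j$ for every edge, so each equation is preserved term by term. Hence $\vec p'$ is a solution whenever $\vec p$ is.

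Equivalently, I would argue on the matrix side using \lemref{matrixparallel}: it suffices to show that the two vectors
\[
\vec 1_x = (1,0,1,0,\ldots,1,0) \quad\text{and}\quad \vec 1_y = (0,1,0,1,\ldots,0,1)
\]
in $\mathbb{R}^{2n}$ lie in the kernel of $\vec M_{2,2}(G)$, since translations are exactly their real span. For the row indexed by an edge $ij$, the only nonzero entries in columns for $i$ are $(a_{ij},b_{ij})$ and in columns for $j$ are $(-a_{ij},-b_{ij})$; pairing against $\vec 1_x$ gives $a_{ij} + (-a_{ij}) = 0$, and analogously for $\vec 1_y$. Thus $\vec 1_x,\vec 1_y \in \ker \vec M_{2,2}(G)$, so adding any translation to a solution produces another solution.

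There is no real obstacle in the argument; the only point to fix is the interpretation of ``preserved by translation,'' which I read as: for any solution $\vec p$ of $\vec P(G,\vec d)$ and any $\vec t\in\mathbb{R}^2$, the translated configuration $(\vec p_1 + \vec t,\ldots,\vec p_n + \vec t)$ is also a solution. Both arguments above establish exactly this, and either one is a two-line verification.
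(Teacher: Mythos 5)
Your proof is correct and your first argument is exactly the paper's own one-line proof: the translation cancels in the difference $\vec p_i - \vec p_j$, so each equation is unchanged. The matrix-kernel reformulation is an equivalent restatement and adds nothing essential.
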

\begin{proof}
Let $\vec t$ be a vector in $\Re^2$.  Then $\iprod{(\vec p_i+\vec t)-(\vec p_j+\vec t)}{\vec d_{ij}^\perp}=\iprod{\vec p_i-\vec p_j}{\vec d_{ij}^\perp}$.
\end{proof}

\subsection{Realizations of direction networks on $(2,2)$-graphs}
There is a simple characterization of realizations of generic direction networks on
$(2,2)$-graphs: they are all collapsed.

\begin{lemma}\lemlab{22parallel}
Let $G$ be a $(2,2)$-graph on $n$ vertices,
and let $\vec d_{ij}$ be directions such that the system $\vec P(G,\vec d)$ has rank $2n-2$.  (This is possible by \lemref{genericdirections}.)
Then the (unique up to translation)  realization of $G$ with directions
$\vec d_{ij}$ is collapsed.
\end{lemma}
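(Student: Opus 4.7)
The plan is to use a dimension count on the solution space of $\vec P(G,\vec d)$ together with translation invariance from \lemref{translation}.

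First, I would observe that since $G$ is a $(2,2)$-graph on $n$ vertices, it has exactly $m=2n-2$ edges, so $\vec M_{2,2}(G)$ is a $(2n-2)\times 2n$ matrix. The hypothesis that $\vec P(G,\vec d)$ has rank $2n-2$ therefore means the system has full row rank, so by rank-nullity the solution space $\{\vec p\in \mathbb{R}^{2n}:\vec M_{2,2}(G)\vec p=\vec 0\}$ has dimension exactly $2n-(2n-2)=2$.

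Next, I would exhibit a $2$-dimensional space of collapsed solutions inside the solution space. The collapsed realizations, in which all $\vec p_i$ coincide at a single point $\vec t\in \Re^2$, are exactly the translates of the all-zero realization under the translation action from \lemref{translation}. Since this translation action is faithful (translation by two distinct vectors gives two distinct all-equal realizations), the collapsed realizations span a $2$-dimensional subspace of the solution space.

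Finally, since the solution space has dimension $2$ and contains the $2$-dimensional subspace of collapsed realizations, the two spaces must coincide. Hence every realization of $(G,\vec d)$ is collapsed, and moreover the realization is unique up to translation, as claimed. The argument is essentially a dimension count, so there is no substantive obstacle once \lemref{parallelrank} and \lemref{translation} are in hand; the only thing to verify carefully is that the collapsed realizations really form a $2$-dimensional subspace, which is immediate from the fact that the map $\vec t\mapsto (\vec t,\vec t,\ldots,\vec t)\in (\Re^2)^n$ is linear and injective.
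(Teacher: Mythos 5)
Your proof is correct and is essentially the paper's own argument: both reduce to the observation that the homogeneous system has a $2$-dimensional solution space (by the rank hypothesis) which already contains the $2$-dimensional family of collapsed (translated) realizations, so the two must coincide. Your dimension-count phrasing is, if anything, slightly more explicit than the paper's ``factor out translations by fixing $\vec p_1$'' formulation, but there is no substantive difference.
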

\begin{proof}
By hypothesis the system $\vec P(G,\vec d)$ is homogeneous of rank $2n-2$.  Factoring out
translations by moving the variables giving associated with
$\vec p_1$ to the right, we have a unique solution
for each setting of the value of $\vec p_1$.
Since a collapsed realization satisfies the system, it is the only one.
\end{proof}

\subsection{Realizations of direction networks on Laman graphs}
In the rest of this section we complete the proof of \theoref{parallel}.

\paragraph{The contracted direction network realization problem.}
Let $(G,\vec d)$ be a direction network, with realization system $\vec P(G,\vec d)$, and let
$vw$ be an edge of $G$.  We define the \emph{$vw$-contracted realization system}
$\vec P_{vw}(G,\vec d)$ to be the linear system obtained by replacing
$\vec p_v$ with $\vec p_w$ in $\vec P(G,\vec d)$.

\paragraph{Combinatorial interpretation of $\vec P_{vw}(G)$.}
We relate $\vec P(G/vw,\vec d)$ and $\vec P_{vw}(G,\vec d)$ in the following lemma.

\begin{lemma}\lemlab{contractreplace}
Let $(G,\vec d)$ be a generic direction network.  Then for any edge $vw$ the system
$\vec P_{vw}(G,\vec d)$ is the same as the system $\vec P(G/vw,\vec d)$, and the
generic rank of $\vec P_{vw}(G,\vec d)$ is the same as that of $\vec M_{2,2}(G/vw)$.
\end{lemma}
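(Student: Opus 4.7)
The plan is to verify the two claims by direct row-by-row inspection, then invoke \lemref{matrixparallel}. The proof is essentially a bookkeeping exercise matching the algebraic substitution $\vec p_v \mapsto \vec p_w$ against the graph-theoretic definition of $G/vw$ given in \secref{sparse}.

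First I would walk through what the substitution does to each equation of $\vec P(G,\vec d)$. An edge $ij\in E$ with $i,j\neq v$ yields an equation $\iprod{\vec p_i-\vec p_j}{\vec d_{ij}^\perp}=0$ that is unaffected by the substitution; in $G/vw$ this edge is retained with the same direction, so the equation appears verbatim in $\vec P(G/vw,\vec d)$. An edge $vk$ with $k\neq v,w$ becomes $\iprod{\vec p_w-\vec p_k}{\vec d_{vk}^\perp}=0$ under the substitution; by the contraction definition such an edge is replaced by an edge $wk$ inheriting the direction $\vec d_{vk}$, so this equation is exactly the one contributed by the new edge in $\vec P(G/vw,\vec d)$. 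Finally, the (possibly parallel) copies of the edge $vw$ itself collapse to the trivial identity $0=0$; in $G/vw$ these would become self-loops on $w$, which the definition of contraction discards. Thus, after deleting the zero rows, $\vec P_{vw}(G,\vec d)$ is literally the system $\vec P(G/vw,\vec d)$ in the unknowns $\{\vec p_i : i\in V(G/vw)\}$, which establishes the first claim.

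The second claim is then immediate: \lemref{matrixparallel} applied to $G/vw$ identifies the linear system $\vec P(G/vw,\vec d)$ with the matrix equation $\vec M_{2,2}(G/vw)\vec p=\vec 0$, so its generic rank equals the generic rank of $\vec M_{2,2}(G/vw)$.

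The only place requiring a small amount of care is the treatment of multiple edges and of the contracted edge(s) themselves, to ensure the count of trivial rows generated by the substitution agrees with the loops discarded by the contraction. Since the paper's contraction convention explicitly drops loops created by identifying endpoints but keeps multi-edges otherwise, this matches perfectly and no separate argument is needed. Once this routine check is done, no further work remains, since all the quantitative content has been offloaded to \lemref{matrixparallel}.
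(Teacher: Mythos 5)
Your proof is correct and follows essentially the same route as the paper's: identify the substitution $\vec p_v\mapsto\vec p_w$ with the graph contraction $G/vw$ equation-by-equation, then read off the rank from the matrix form of the realization system. The paper's version is terser (it does not spell out the bookkeeping for the trivial $0=0$ rows and the discarded loops, and it cites the rank statement via \lemref{parallelrank} rather than \lemref{matrixparallel} directly), but the content is the same.
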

\begin{proof}
By definition, in the system $\vec P_{vw}(G,\vec d)$:
\begin{itemize}
\item The point $\vec p_v$ disappears
\item Every occurrence of $\vec p_v$ is replaced with $\vec p_w$
\end{itemize}
Combinatorially, this corresponds to contracting over the edge $vw$ in $G$, which shows
that $\vec P_{vw}(G,\vec d)$ is the same system as $\vec P(G/vw,\vec d)$.  An
application of \lemref{parallelrank} to $\vec P(G/vw,\vec d)$ shows that its
rank is equivalent to that of $\vec M_{2,2}(G/vw)$.
\end{proof}

Since the replacement of $\vec p_v$ with $\vec p_w$ is the same as setting $\vec p_v=\vec p_w$, we have the
following corollary to \lemref{contractreplace}.

\begin{cor}\corlab{collapsecontract}
Let $(G,\vec d)$ be a direction network and $ij$ an edge in $G$.
If in every solution $\vec p$ of
$\vec P(G,\vec d)$,  $\vec p_i=\vec p_j$, then $\vec p$ is a solution to $\vec P(G,\vec d)$ if and only if $\vec p'$
obtained by dropping $\vec p_i$ from $\vec p$ is a solution to $\vec P(G/ij, \vec d)$.
\end{cor}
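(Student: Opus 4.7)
The plan is to deduce the corollary directly from \lemref{contractreplace} by leveraging the hypothesis that every solution of $\vec P(G,\vec d)$ satisfies $\vec p_i=\vec p_j$. This hypothesis means that, as far as the solution space is concerned, we can freely replace $\vec p_i$ by $\vec p_j$ (or vice versa) everywhere without losing or gaining any solutions.

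First, I would argue the forward direction. Suppose $\vec p$ solves $\vec P(G,\vec d)$. By hypothesis, $\vec p_i=\vec p_j$. Substituting $\vec p_j$ for every occurrence of $\vec p_i$ in the defining equations of $\vec P(G,\vec d)$ yields the same set of satisfied equations, so $\vec p$ remains a solution of the substituted system $\vec P_{ij}(G,\vec d)$. Dropping the (now redundant) variable $\vec p_i$ from the vector $\vec p$ produces a vector $\vec p'$ whose coordinates are indexed by $V(G/ij)$, and this vector satisfies $\vec P_{ij}(G,\vec d)$. By \lemref{contractreplace}, $\vec P_{ij}(G,\vec d)$ is literally the same system as $\vec P(G/ij,\vec d)$, so $\vec p'$ is a solution of $\vec P(G/ij,\vec d)$.

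Next, the reverse direction. Suppose $\vec p'$ is a solution of $\vec P(G/ij,\vec d)$. Again by \lemref{contractreplace}, $\vec p'$ solves $\vec P_{ij}(G,\vec d)$. Form the vector $\vec p$ by assigning to vertex $i$ the same value as vertex $j$ (i.e.\ $\vec p_i:=\vec p_j=\vec p'_j$) and keeping the other coordinates of $\vec p'$ unchanged. Then every equation of $\vec P(G,\vec d)$ becomes, after this identification, exactly an equation of $\vec P_{ij}(G,\vec d)$, which $\vec p'$ satisfies; in particular the edge equation for $ij$ reduces to the trivial identity $0=0$. Hence $\vec p$ solves $\vec P(G,\vec d)$, and by construction $\vec p_i=\vec p_j$ (consistent with the standing hypothesis).

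There is no real obstacle here beyond bookkeeping: the only subtlety is to be explicit that ``dropping $\vec p_i$'' is well-defined precisely because $\vec p_i=\vec p_j$ forces no information loss, and that the combinatorial contraction $G/ij$ matches the algebraic substitution via \lemref{contractreplace}. The corollary is then a one-line consequence of that lemma together with the collapse hypothesis.
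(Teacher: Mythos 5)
Your argument is correct and is essentially the paper's own: the paper derives the corollary in one line from \lemref{contractreplace} by noting that replacing $\vec p_i$ with $\vec p_j$ is the same as imposing $\vec p_i=\vec p_j$, which is exactly the substitution-plus-bookkeeping you carry out in both directions. Your version is just a more explicit unpacking of that remark, so no further changes are needed.
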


\paragraph{A genericity condition.}
The final ingredient we need is the following genericity condition.

\begin{lemma}\lemlab{parallelgeneric}
Let $G$ be a Laman graph on $n$ vertices.  Then the set of directions
$\vec d\in \mathbb{R}^{2m}$ such that:
\begin{itemize}
\item The system $\vec P(G,\vec d)$ has rank $2n-3$
\item For all edges $ij\in E$, the system $\vec P(G/ij,\vec d)$ has rank $2(n-1) - 2$
\end{itemize}
is open and dense in $\mathbb{R}^{2m}$.
\end{lemma}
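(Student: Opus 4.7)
The plan is to reduce the statement to two applications of Lemma~\ref{lemma:genericdirections}---one for the graph $G$ itself and one for each contraction $G/ij$---and then intersect the finitely many resulting open dense sets in $\mathbb{R}^{2m}$.

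First, I would note that since $G$ is Laman it is in particular $(2,2)$-sparse with $m = 2n-3 \le 2n-2$ edges, so Lemma~\ref{lemma:genericdirections} directly applies to $G$ and shows that
\[
U_0 = \{\vec d \in \mathbb{R}^{2m} : \operatorname{rank} \vec P(G, \vec d) = 2n-3\}
\]
is the (open, dense) complement of an algebraic subset of $\mathbb{R}^{2m}$.

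Next, fix any edge $ij \in E$. By Lemma~\ref{lemma:22contract} the contracted graph $G/ij$ is a $(2,2)$-graph on $n-1$ vertices with $m-1 = 2(n-1)-2$ edges. Its edges are in bijection with $E \setminus \{ij\}$, so the system $\vec P(G/ij,\vec d)$ depends only on the restriction of $\vec d$ to those $m-1$ edges. Writing $\pi_{ij} : \mathbb{R}^{2m} \to \mathbb{R}^{2(m-1)}$ for the coordinate projection that forgets the two entries of $\vec d_{ij}$, Lemma~\ref{lemma:genericdirections} applied to $G/ij$ produces an open dense set $U'_{ij} \subseteq \mathbb{R}^{2(m-1)}$ on which $\vec P(G/ij,\cdot)$ attains its generic rank $2(n-1)-2$. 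Pulling the defining polynomials back along $\pi_{ij}$, the set $U_{ij} := \pi_{ij}^{-1}(U'_{ij})$ is again the complement of an algebraic set in $\mathbb{R}^{2m}$, hence open and dense.

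Finally, the set of directions satisfying both bullet points of the lemma is exactly $U_0 \cap \bigcap_{ij \in E} U_{ij}$, a finite intersection of complements of algebraic sets in $\mathbb{R}^{2m}$. Any such intersection is itself the complement of a single algebraic set (the union of the defining ones), so it is open and dense as required. I do not expect a real obstacle here: all of the substantive work is already done in the matroid representation Lemma~\ref{lemma:22matrix} and its consequences Lemmas~\ref{lemma:parallelrank} and~\ref{lemma:genericdirections}, together with the combinatorial characterization in Lemma~\ref{lemma:22contract}. The only small point to check carefully is that $\vec P(G/ij,\vec d)$ depends on a strict subset of the coordinates of $\vec d$, which is what legitimizes the pullback along $\pi_{ij}$.
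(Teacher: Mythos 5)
Your proof is correct and takes essentially the same route as the paper's: both use Lemma~\ref{lemma:22contract} to see that each $G/ij$ is a $(2,2)$-graph, invoke Lemma~\ref{lemma:genericdirections} for $G$ and for each contraction, and conclude by intersecting finitely many open dense complements of algebraic sets in $\mathbb{R}^{2m}$. Your explicit handling of the coordinate projection $\pi_{ij}$ is a small technical point the paper leaves implicit.
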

\begin{proof}
By \lemref{22contract} all the graphs $G/ij$ are $(2,2)$-graphs and since $G$ is Laman, all the
graphs appearing in the hypothesis are $(2,2)$-sparse, so we may apply \lemref{genericdirections}
to each of them separately.  The set of $\vec d$ failing the requirements of the lemma is thus the union of finitely many closed algebraic sets in $\mathbb{R}^{2m}$ of measure zero.  Its complement is open and dense, as required.
\end{proof}

\paragraph{Proof of \theoref{parallel}.}

We first assume that $G$ is not Laman.  In this case it has an edge-induced subgraph $G'$
that is a $(2,2)$-graph by results of \cite{pebblegame}.  This means that
for generic directions $\vec d$, the system $\vec P(G,\vec d)$
has a subsystem corresponding to $G'$ to which \lemref{22parallel} applies.  Thus any
realization of $(G,\vec d)$ has a collapsed edge.

For the other direction, we assume, without loss of generality, that $G$ is a Laman graph.
We select directions $\vec d$ meeting the criteria of
\lemref{parallelgeneric} and consider the direction network $(G,\vec d)$.

Since $\vec P(G,d)$ has $2n$ variables and rank $2n-3$, we move $\vec p_1$ to the right to remove translational symmetry and one other variable, say, $x_2$, where $\vec p_2=(x_2,y_2)$.  The system
has full rank, so for each setting of $\vec p_1$ and $x_2$ we obtain a unique solution.  Set $\vec p_1=(0,0)$ and $x_2=1$ to get a solution $\hat{\vec p}$
of $\vec P(G,\vec d)$ where $\vec p_1\neq \vec p_2$.

We claim that $G(\hat{\vec p})$ is faithful.  Supposing the contrary, for a contradiction,
we assume that some edge $ij\in E$ is collapsed in $G(\hat{\vec p})$.  Then the equation
$\vec p_i = \vec p_j$ is implied by $\vec P(G,\vec d)$.  Applying \corref{collapsecontract},
we see that after removing $\hat{\vec p_i}$ from $\hat{\vec p}$, we obtain a solution to $\vec P(G/ij,\vec d)$.  But then by \lemref{22contract}, $G/ij$ is a $(2,2)$-graph.  Because $\vec d$ was selected (using \lemref{parallelgeneric})
so that $\vec P(G/ij,\vec d)$ has full rank, \lemref{22parallel} applies to $(G/ij,\vec d)$,
showing that every edge is collapsed in $G(\hat{\vec p})$.  We have now arrived
at a contradiction: $G$ is connected, and by construction $\vec p_1\neq \vec p_2$, so some edge
is not collapsed in $G(\hat{\vec p})$. $\qed$

\paragraph{Remarks on genericity.} The proof of \theoref{parallel}
shows why each of the two conditions in \lemref{parallelgeneric}
are required.  The first, that $\vec P(G,\vec d)$ have full rank, ensures that there is a unique solution up to translation.
The second condition, that for each edge $ij$ the system $\vec P(G,\vec d)$ has full rank, rules out sets of directions that
are only realizable with collapsed edges.

The second condition in the proof is necessary by the following example:
let $G$ be a triangle and assign two of its edges the horizontal
direction and the other edge the vertical direction.  It is easy to check that the resulting $\vec P(G,\vec d)$ has full rank,
but it is geometrically evident that the edges of a non-collapsed triangle require either one or three directions.  This example
is ruled out by the contraction condition in \lemref{parallelgeneric}, since contracting the vertical edge results in a
rank-deficient system with two vertices and two copies of an edge in the same direction.

\section{Direction-slider network realization}\seclab{sliderparallel}
A \emph{direction-slider network} $(G,\vec d,\vec n,\vec s)$ is a looped graph, together with assignments of:
\begin{itemize}
\item A direction $\vec d_{ij}\in \mathbb{R}^2$ to each edge $ij\in E$.
\item A \emph{slider}, which is an affine line $\iprod{\vec n_{i_j}}{\vec x}=s_{i_j}$
in the plane, to each loop $i_j\in E$.
\end{itemize}

A \emph{realization} $G(\vec p)$ of a direction-slider network is an embedding of $G$
onto the point set $\vec p$ such that:
\begin{itemize}
\item Each edge $ij$ is drawn in the direction $\vec d_{ij}$.
\item For each loop $i_j$ on a vertex $i$, the point $\vec p_i$
is on the line $\iprod{\vec n_{i_j}}{\vec x}=s_{i_j}$.
\end{itemize}

As in the definitions for direction networks in the previous section, an edge $ij$ is \emph{collapsed}
in a realization $G(\vec p)$ if $\vec p_i=\vec p_j$.  A realization $G(\vec p)$ is \emph{faithful}
if none of the edges of $G$ are collapsed.

The main result of this section is:
\sliderparallelthm

\paragraph{Roadmap.}  The approach runs along the lines of previous section.  However,
because the system $\vec S(G,\vec d,\vec n, \vec s)$ is inhomogeneous, we obtain
a contradiction using unsolvability instead of a unique collapsed realization.
The steps are:
\begin{itemize}
\item  Formulate the direction-slider realization problem as a linear system and relate the rank of the parallel sliders realization system to the
representation of the $(2,0,2)$-sparsity-matroid to show the generic rank of the realization
system is given by the rank of the graph $G$ in the $(2,0,2)$-matroid. (\lemref{parallelsliderrank})
\item Connect graph theoretic contraction over an edge $ij$ to the edge being collapsed in all realizations of the direction-slider network: show that when
$\vec S(G,\vec d,\vec n, \vec s)$ implies that some edge $ij$ is collapsed in all realizations means
that it is equivalent to $\vec S(G/ij,\vec d,\vec n, \vec s)$. (\lemref{slidercontractreplace} and \corref{slidercollapsecontract})
\item Show that for looped graphs with combinatorially independent edges and one too
many loops, the system $\vec S(G/ij,\vec d,\vec n, \vec s)$ is generically not solvable.
(\lemref{genericsliderdirections}).
\item Show that if $G$ is looped-Laman, then there are generic directions and sliders for $\vec M_{2,0,2}(G)$ so that the contraction of any edge leads to an unsolvable system. (\lemref{slidergenericity}.)
\item Put the above tools together to show that for a looped-Laman graph, the realization problem is generically solvable, and the (unique solution) does not collapse any edges.
\end{itemize}

\subsection{Direction-slider realization as a linear system.}
Let $(G,\vec d,\vec n,\vec s)$ be a direction-slider network.  We define the system of equations
$\vec S(G,\vec d,\vec n, \vec s)$ to be:
\begin{eqnarray}
\eqlab{s1}	\iprod{\vec p_i-\vec p_j}{\vec d_{ij}^\perp} = 0 & \text{for all edges $ij\in E$} \\
\eqlab{s2}	\iprod{\vec p_i}{\vec n_{i_j}} = s_{i_j} & \text{for all loops $i_j\in E$}
\end{eqnarray}
From the definition, it is immediate that the realizations of $(G,\vec d,\vec n,\vec s)$
are exactly the solutions of $\vec S(G,\vec d,\vec n, \vec s)$.  The matrix form
of $\vec S(G,\vec d,\vec n, \vec s)$ gives the connection to the $(2,0,2)$-sparsity matroid.

\begin{lemma}\lemlab{parallelslidermatrix}
Let $(G,\vec d,\vec n,\vec s)$ be a direction slider network.  The solutions to the
system $\vec S(G,\vec d,\vec n, \vec s)$ are exactly the solutions to the matrix equation
\[\vec M_{2,0,2}(G)\vec p = (\vec 0, \vec s)^{\mathsf{T}}\]
\end{lemma}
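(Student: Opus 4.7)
The plan is to verify the equivalence equation-by-equation, treating the two types of constraints in $\vec S(G,\vec d,\vec n,\vec s)$ separately and observing that together they assemble into exactly the prescribed matrix equation. This is a direct generalization of the proof of \lemref{matrixparallel}, with the loops contributing the inhomogeneous part of the right-hand side.

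First I would handle the edge equations \eqref{s1}. By bilinearity of the inner product,
\[
\iprod{\vec p_i - \vec p_j}{\vec d_{ij}^{\perp}} \;=\; \iprod{\vec p_i}{\vec d_{ij}^{\perp}} + \iprod{\vec p_j}{-\vec d_{ij}^{\perp}},
\]
which is exactly the row of $\vec M_{2,0,2}(G)$ indexed by edge $ij$ applied to $\vec p$, under the substitution $(a_{ij},b_{ij})=\vec d_{ij}^{\perp}$. The corresponding entry on the right-hand side is $0$, consistent with the $\vec 0$-block of $(\vec 0,\vec s)^{\mathsf{T}}$. This step is verbatim the argument used in \lemref{matrixparallel}.

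Next I would handle the loop equations \eqref{s2}. For a loop $i_j$, the equation $\iprod{\vec p_i}{\vec n_{i_j}} = s_{i_j}$ involves only the two coordinates of $\vec p_i$, with coefficients given by the components of $\vec n_{i_j}$. Under the substitution $(c_{i_j},d_{i_j})=\vec n_{i_j}$, this is precisely the row of $\vec M_{2,0,2}(G)$ indexed by $i_j$ applied to $\vec p$, with right-hand side $s_{i_j}$, consistent with the $\vec s$-block of $(\vec 0,\vec s)^{\mathsf{T}}$.

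Stacking the edge rows (with zero right-hand sides) above the loop rows (with right-hand sides $s_{i_j}$) gives the full matrix equation $\vec M_{2,0,2}(G)\vec p = (\vec 0, \vec s)^{\mathsf{T}}$, and the correspondence between solutions is a tautology once each row has been identified with its defining equation. There is no real obstacle here; the only bookkeeping subtlety is that the edges contribute homogeneous equations while the loops contribute inhomogeneous ones, so the right-hand side naturally splits into the zero block for edges and the slider-offset block $\vec s$ for loops.
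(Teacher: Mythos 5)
Your proof is correct and follows essentially the same route as the paper, which simply says the edge rows are handled as in \lemref{matrixparallel} and the slider equations are already in the desired form; you have just written out the row-by-row identification explicitly. No issues.
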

\begin{proof}
Similar to the proof of \lemref{matrixparallel} for the edges of $G$. The slider
are already in the desired form.
\end{proof}

As a consequence, we obtain the following two lemmas.

\begin{lemma}\lemlab{parallelsliderrank}
Let $G$ be a graph on $n$ vertices with $m\le 2n$ edges.
The generic rank of $\vec S(G,\vec d,\vec n, \vec s)$
(with the $\vec p_i$ as the $2n$ unknowns) is $m$ if and only if $G$ is
$(2,0,2)$-sparse.  In particular, it is $2n$ if and only if $G$ is a looped-$(2,2)$ graph.
\end{lemma}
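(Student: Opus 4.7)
The plan is to derive this as an immediate corollary of the two preceding lemmas, in direct parallel with how Lemma \ref{lem:parallelrank} was obtained from Lemma \ref{lem:matrixparallel} and Lemma \ref{lem:22matrix}. First I would invoke Lemma \ref{lem:parallelslidermatrix}, which identifies the system $\vec S(G,\vec d,\vec n,\vec s)$ with the matrix equation $\vec M_{2,0,2}(G)\vec p = (\vec 0,\vec s)^{\mathsf T}$. Since the generic rank of a linear system is the generic rank of its coefficient matrix (the inhomogeneous part $\vec s$ does not affect rank), the generic rank of $\vec S(G,\vec d,\vec n,\vec s)$ coincides with the generic rank of $\vec M_{2,0,2}(G)$.

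Next I would apply Lemma \ref{lem:202matrix}, which asserts that $\vec M_{2,0,2}(K_n^{2,2})$ is a generic representation of the $(2,0,2)$-graded-sparsity matroid. Viewing $\vec M_{2,0,2}(G)$ as the submatrix of $\vec M_{2,0,2}(K_n^{2,2})$ indexed by the $m$ rows corresponding to the edges and loops of $G$, a standard consequence of the representation property is that the rows of $\vec M_{2,0,2}(G)$ achieve rank equal to the number of rows (namely $m$) generically if and only if the corresponding set of edges and loops is independent in the $(2,0,2)$-graded-sparsity matroid. By the definition of this matroid, independence is exactly the condition that $G$ be $(2,0,2)$-graded-sparse. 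This yields the first statement.

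For the ``in particular'' part, observe that $\vec M_{2,0,2}(G)$ has $2n$ columns, so its rank is at most $2n$; the rank equals $2n$ precisely when $m=2n$ and the rows are generically independent, i.e. when $G$ is $(2,0,2)$-graded-sparse with exactly $m+c=2n$ edges and loops. This is the definition of a looped-$(2,2)$ graph.

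I do not expect a genuine obstacle in this argument: the work has been localized in Lemma \ref{lem:parallelslidermatrix} (reducing the realization system to a matrix equation in $\vec M_{2,0,2}(G)$) and Lemma \ref{lem:202matrix} (identifying $\vec M_{2,0,2}$ with a generic matroid representation), and what remains is only to combine them and read off the rank condition. The only minor subtlety is noting that restricting a generic representation to a subset of rows preserves the matroid-theoretic interpretation of rank, which is standard.
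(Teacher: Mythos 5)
Your proposal matches the paper's proof, which consists of exactly the same two steps: apply the matrix-form lemma for $\vec S(G,\vec d,\vec n,\vec s)$ and then the generic-representation lemma for $\vec M_{2,0,2}$. The extra remarks you add (that the inhomogeneous term does not affect rank, and that restricting a representation to a row subset preserves the matroid interpretation) are correct elaborations of what the paper leaves implicit.
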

\begin{proof}
Apply \lemref{parallelslidermatrix} and then \lemref{202matrix}.
\end{proof}

We need, in addition, the following result on when $\vec S(G,\vec d,\vec n, \vec s)$
has no solution.
\begin{lemma}\lemlab{genericsliderdirections}
Let $G$ be a looped-$(2,2)$ graph and let $G'$ be obtained
from $G$ by adding a single loop $i_j$ to $G$.
Then the set of edge direction assignments and slider
lines $(\vec d,\vec n, \vec s)\in \mathbb{R}^{2m+3c}$ such that
the direction-slider network realization system $\vec S(G',\vec d,\vec n, \vec s)$
has no solution
is the (open, dense) complement of an algebraic subset of $\mathbb{R}^{2m+3c}$.
\end{lemma}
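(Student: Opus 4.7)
The plan is to exhibit the bad set---those $(\vec d,\vec n,\vec s)$ for which $\vec S(G',\vec d,\vec n,\vec s)$ \emph{does} admit a solution---as the zero set of a single polynomial in the generic parameters, and then to check this polynomial is not identically zero. Once this is done, the bad set is a proper algebraic subset and its complement is open and dense.

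First, I would apply \lemref{parallelslidermatrix} to rewrite $\vec S(G',\vec d,\vec n,\vec s)$ as the matrix equation $\vec M_{2,0,2}(G')\vec p = (\vec 0,\vec s)\transp$. Since $G$ is looped-$(2,2)$ we have $m+c=2n$, so $G'$ has $2n+1$ edges and loops; thus $\vec M_{2,0,2}(G')$ is a $(2n+1)\times 2n$ matrix and the augmented matrix
\[
\vec A \;:=\; \bigl[\,\vec M_{2,0,2}(G') \,\bigm|\, (\vec 0,\vec s)\transp\,\bigr]
\]
is square of size $(2n+1)\times(2n+1)$. By the Rouché--Capelli criterion, $\vec S(G',\vec d,\vec n,\vec s)$ has no solution precisely when $\mathrm{rank}(\vec A) > \mathrm{rank}(\vec M_{2,0,2}(G'))$; since $\vec M_{2,0,2}(G')$ has only $2n$ columns, this is equivalent to $\det(\vec A)\neq 0$.

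It remains to check that $\det(\vec A)$ is a nonzero polynomial in the generic parameters. I would expand along the augmented (rightmost) column, whose only nonzero entries $s_{i_k}$ lie in the loop rows. The crucial observation is that the generic parameter $s_{i_j}$ of the newly added loop appears in exactly one entry of $\vec A$---its own position in the augmented column in row $i_j$---so in the cofactor expansion it can only occur in the single term
\[
\pm\, s_{i_j}\cdot \det\bigl(\vec M_{2,0,2}(G)\bigr),
\]
obtained by striking the $i_j$-row and the augmented column, whose complementary block is precisely $\vec M_{2,0,2}(G)$. By \lemref{202matrix} applied to the looped-$(2,2)$ graph $G$, the polynomial $\det(\vec M_{2,0,2}(G))$ is not identically zero. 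Hence the coefficient of $s_{i_j}^{1}$ in $\det(\vec A)$ is a nonzero polynomial in the remaining generic variables, so $\det(\vec A)$ itself is a nonzero polynomial, which is all we needed.

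There is no real obstacle here; the only point requiring care is the observation that $s_{i_j}$ occurs in exactly one entry of $\vec A$, so no cancellation with other terms of the Laplace expansion can kill the coefficient of $s_{i_j}$. Everything else is packaging: reducing solvability to a rank equality, expressing that rank equality as the vanishing of a determinant, and identifying the relevant cofactor with $\vec M_{2,0,2}(G)$ so that \lemref{202matrix} can be invoked.
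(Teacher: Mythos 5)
Your proof is correct, and it reaches the conclusion by a genuinely different (and arguably cleaner) route than the paper. The paper first solves the square subsystem $\vec M_{2,0,2}(G)\vec p=(\vec 0,\vec s)\transp$ for its unique solution $\hat{\vec p}$, whose entries are rational functions of the parameters of $G$ only, and then observes that the extra equation $\iprod{\vec n_{i_j}}{\hat{\vec p}_i}=s_{i_j}$ imposed by the new loop is a nontrivial algebraic condition because $\vec n_{i_j}$ and $s_{i_j}$ are fresh variables that do not occur in $\hat{\vec p}_i$; the bad set is then exhibited as the union of $\{\det\vec M_{2,0,2}(G)=0\}$ with the zero set of that extra equation. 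You instead package everything into the single determinant of the $(2n+1)\times(2n+1)$ augmented matrix $\vec A$ and certify its nonvanishing through the cofactor of $s_{i_j}$, which is exactly $\pm\det\vec M_{2,0,2}(G)$ and hence not identically zero by \lemref{202matrix}. Both arguments rest on the same two pillars (\lemref{202matrix} applied to $G$, and the fact that the new loop's parameters are unconstrained by the rest of the system), but your version avoids Cramer's rule and the paper's unneeded side claim that the entries of $\hat{\vec p}_i$ are nonzero, at the price of losing the geometric picture (the paper notes that the degenerate sliders are exactly those in the pencil of lines through $\hat{\vec p}_i$). One small caveat: unsolvability is \emph{not} equivalent to $\det(\vec A)\neq 0$ --- if the coefficient matrix degenerates further, the system can be unsolvable while $\det(\vec A)=0$ --- so what you actually prove is the containment of the solvable set in the hypersurface $\{\det\vec A=0\}$. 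That is the direction you need (it shows the no-solution set contains the open dense complement of a proper algebraic subset), and it matches the precision of the paper's own proof, which likewise only establishes that the solvable set lies inside a proper closed algebraic set.
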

\begin{proof}
By \lemref{parallelslidermatrix} and \lemref{parallelsliderrank}, the solution $\vec p=\hat{\vec p}$
to the generic matrix equation
\[
\vec M_{2,0,2}(G)\vec p =  (\vec 0, \vec s)^{\mathsf{T}}
\]
has as its entries non-zero formal
polynomials in the entries of $\vec d$, $\vec n$, and $\vec s$.  In particular,
the entries of $\hat{\vec p}_i$ are non-zero.  This implies that for the equation
\[
\vec M_{2,0,2}(G')\vec p =  (\vec 0, \vec s)^{\mathsf{T}}
\]
to be solvable, the solution will have to be $\hat{\vec p}$, and  $\hat{\vec p}_i$ will have to satisfy the additional equation
\[
\iprod{\vec n_{i_j}}{\hat{\vec p_i}} = s_{i_j}
\]
Since the entries of $\vec n_{i_j}$ and $s_{i_j}$ are generic and don't appear at in
$\hat{\vec p}_i$, the system $\vec S(G',\vec d,\vec n, \vec s)$ is solvable only when either
the rank of $\vec M_{2,0,2}(G)$ drops, which happens only for closed algebraic subset of
$\mathbb{R}^{2m+3c}$ or when $\vec n_{i_j}$ and $s_{i_j}$ satisfy the above equation, which
is also a closed algebraic set.  (Geometrically, the latter condition says that the line of the
slider corresponding to the loop $i_j$ is in the pencil of lines through $\hat{\vec p_i}$.)
\end{proof}

\paragraph{Contracted systems.}
Let $vw\in E$ be an edge.  We define $\vec S_{vw}(G,\vec d,\vec n, \vec s)$,
the contracted realization system, which is obtained by
replacing $\vec p_v$ with $\vec p_w$ in $\vec S(G,\vec d,\vec n, \vec s)$.
The contracted system has two fewer variables and one fewer equation (corresponding to
the edge $vw$).

The proof of \lemref{contractreplace} is identical to the proof of the analogous result for direction-slider networks.
\begin{lemma}\lemlab{slidercontractreplace}
Let $(G,\vec d, \vec n,\vec s)$ be a generic direction-slider network.  Then for any edge $vw$ the system
$\vec P_{vw}(G,\vec d)$ is the same as the system $\vec P(G/vw,\vec d, \vec n,\vec s)$, and the
generic rank of $\vec P_{vw}(G,\vec d, \vec n,\vec s)$ is the same as that of $\vec M_{2,0,2}(G/vw)$.
\end{lemma}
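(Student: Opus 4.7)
The plan is to mirror the proof of \lemref{contractreplace} step by step, since the direction-slider realization system $\vec S$ differs from the direction system $\vec P$ only by the extra slider rows of the form $\iprod{\vec p_i}{\vec n_{i_j}} = s_{i_j}$, none of which couple two distinct vertices. The core observation is that the algebraic substitution $\vec p_v \mapsto \vec p_w$ in $\vec S(G,\vec d,\vec n,\vec s)$ is exactly parallel to the combinatorial operation of contracting the edge $vw$ in $G$, as defined in \secref{sparse}. So I would prove that $\vec S_{vw}(G,\vec d,\vec n,\vec s)$ is literally the system $\vec S(G/vw, \vec d, \vec n, \vec s)$, and then invoke \lemref{parallelslidermatrix} to identify its generic rank with the generic rank of $\vec M_{2,0,2}(G/vw)$.

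For the equality of the two systems, I would perform a short case analysis on the equations of $\vec S(G,\vec d,\vec n,\vec s)$ after the substitution $\vec p_v \mapsto \vec p_w$. The equation coming from edge $vw$ itself becomes $\iprod{\vec p_w-\vec p_w}{\vec d_{vw}^\perp}=0$, which is trivially $0=0$ and is discarded; this matches the contraction rule, where the edge $vw$ vanishes and is \emph{not} turned into a loop. An edge $vj$ with $j\ne w$ yields $\iprod{\vec p_w-\vec p_j}{\vec d_{vj}^\perp}=0$, which is exactly the edge equation for the new edge $wj$ in $G/vw$ with direction $\vec d_{vj}$. A loop $v_k$ yields $\iprod{\vec p_w}{\vec n_{v_k}}=s_{v_k}$, which is exactly the slider equation for the loop $w_k$ in $G/vw$. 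All equations not involving $v$ are unchanged. Thus $\vec S_{vw}(G,\vec d,\vec n,\vec s)$ and $\vec S(G/vw,\vec d,\vec n,\vec s)$ agree row by row after the canonical relabeling.

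Once the two systems are identified, \lemref{parallelslidermatrix} applied to the contracted direction-slider network $(G/vw,\vec d,\vec n,\vec s)$ shows that the solution set is the solution set of $\vec M_{2,0,2}(G/vw)\vec p = (\vec 0,\vec s)^{\mathsf{T}}$, so the generic rank of $\vec S_{vw}(G,\vec d,\vec n,\vec s)$ coincides with the generic rank of $\vec M_{2,0,2}(G/vw)$, which is the desired conclusion.

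The only step requiring any care is the bookkeeping for loops under the substitution, since the contraction definition retains loops inherited from $v$ (as loops on $w$) but does not create new loops from the contracted edge. The linear-algebraic side matches this behavior automatically: loop rows are vertex-local, so $\vec p_v \mapsto \vec p_w$ turns each loop row at $v$ into a loop row at $w$, while the single edge row for $vw$ collapses to the trivial $0=0$. I would therefore expect no genuine obstacle; the work is essentially a translation of the proof of \lemref{contractreplace} with the extra (easy) verification for slider rows.
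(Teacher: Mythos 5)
Your proposal is correct and matches the paper's approach: the paper itself simply states that the proof is identical to that of \lemref{contractreplace}, namely that the substitution $\vec p_v \mapsto \vec p_w$ realizes the contraction $G/vw$ row by row (with the vertex-local slider rows transported unchanged), after which the matrix-form lemma identifies the generic rank with that of $\vec M_{2,0,2}(G/vw)$. Your explicit verification of the loop rows is exactly the ``easy extra check'' the paper leaves implicit.
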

The following is the direction-slider analogue of \corref{collapsecontract}.
\begin{cor}\corlab{slidercollapsecontract}
Let $(G,\vec d, \vec n,\vec s)$ be a direction-slider network and $ij$ an edge in $G$.
If in all solutions $\vec p$ of $\vec P(G,\vec d, \vec n,\vec s)$
$\vec p_i=\vec p_j$,
then $\vec p$ is a solution to $\vec P(G,\vec d, \vec n,\vec s)$ if and only if $\vec p'$ obtained by
dropping $\vec p_i$ from $\vec p$ is a solution to $\vec P(G/ij, \vec d, \vec n,\vec s)$.
\end{cor}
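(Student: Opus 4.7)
The plan is to mimic the proof of \corref{collapsecontract} almost verbatim, using \lemref{slidercontractreplace} in place of \lemref{contractreplace}. The key observation is that the hypothesis---namely that every solution $\vec p$ of $\vec S(G,\vec d,\vec n,\vec s)$ satisfies $\vec p_i = \vec p_j$---allows us to freely substitute $\vec p_j$ for $\vec p_i$ in the equations of $\vec S(G,\vec d,\vec n,\vec s)$ without altering the solution set. Performing this substitution is, by definition, exactly the operation that produces the contracted system $\vec S_{ij}(G,\vec d,\vec n,\vec s)$.

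First I would observe that the map $\vec p \mapsto \vec p'$ obtained by dropping the coordinates of $\vec p_i$ is a bijection between the solution set of $\vec S(G,\vec d,\vec n,\vec s)$ (restricted to the affine subspace where $\vec p_i = \vec p_j$) and the solution set of the contracted system $\vec S_{ij}(G,\vec d,\vec n,\vec s)$: each equation of $\vec S(G,\vec d,\vec n,\vec s)$ either involves only vertices other than $i$ (and is transcribed unchanged), involves $i$ and some third vertex $k$ via an edge $ik$ or a slider on $i$ (and becomes the corresponding constraint on $\vec p_j$ in the contracted system), or is the edge equation for $ij$ itself, which is tautologically satisfied once $\vec p_i = \vec p_j$ and therefore contributes no new information.

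Next I would invoke \lemref{slidercontractreplace} to identify the contracted system $\vec S_{ij}(G,\vec d,\vec n,\vec s)$ with the realization system $\vec S(G/ij,\vec d,\vec n,\vec s)$ of the contracted direction-slider network. Combining this identification with the bijection above yields the stated equivalence: $\vec p$ solves $\vec S(G,\vec d,\vec n,\vec s)$ if and only if $\vec p'$ solves $\vec S(G/ij,\vec d,\vec n,\vec s)$.

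I do not expect a real obstacle here; the whole content of the corollary is the bookkeeping involved in checking that the edge equation for $ij$ disappears correctly (it is the one equation that is ``lost'' under the substitution, and it is precisely the edge that is contracted away) and that no slider equations are created or destroyed by the contraction, since loops on $i$ and on $j$ in $G$ become loops on the merged vertex in $G/ij$, matching exactly the slider equations that $\vec p_j$ must satisfy after the substitution $\vec p_i := \vec p_j$. Once \lemref{slidercontractreplace} is in hand, the corollary is immediate.
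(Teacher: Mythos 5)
Your proposal is correct and follows exactly the route the paper intends: the paper states this corollary without a written proof, treating it as immediate from \lemref{slidercontractreplace} together with the observation (made explicitly for the uncontracted case before \corref{collapsecontract}) that substituting $\vec p_j$ for $\vec p_i$ is the same as imposing $\vec p_i=\vec p_j$, which by hypothesis does not change the solution set. Your additional bookkeeping about the $ij$ edge equation becoming tautological and the slider equations transferring to the merged vertex is exactly the content the paper leaves implicit.
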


\paragraph{A genericity condition.}  The following lemma, which is the counterpart of
\lemref{parallelgeneric}, captures genericity for direction-slider networks.
\begin{lemma}\lemlab{slidergenericity}
Let $G$ be a looped-Laman subgraph.  The set of directions and slider lines such that:
\begin{itemize}
\item The system $\vec S(G,\vec d,\vec n, \vec s)$ has rank $2n$ (and thus has a unique solution)
\item For all edges $ij\in E$, the system $\vec S(G/ij,\vec d,\vec n, \vec s)$ has no solution
\end{itemize}
is open and dense in $\mathbb{R}^{2m+3c}$.
\end{lemma}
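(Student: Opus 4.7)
The plan is to mirror the proof of \lemref{parallelgeneric}, substituting the looped-graph tools for their simple-graph counterparts: use \lemref{loopedlamancontract} in place of \lemref{22contract}, and use \lemref{parallelsliderrank} together with \lemref{genericsliderdirections} in place of \lemref{genericdirections}.

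For the first bullet, I would start by observing that a looped-Laman graph is, in particular, looped-$(2,2)$: its edges are $(2,3)$-sparse and hence $(2,2)$-sparse, the edges-plus-loops satisfy $(2,0)$-sparsity by the graded condition, and $m+c=2n$ by tightness. \lemref{parallelsliderrank} then gives that $\vec S(G,\vec d,\vec n,\vec s)$ has generic rank $2n$, so the bad set (where the rank drops) is a proper algebraic subset of $\mathbb{R}^{2m+3c}$, cut out by the $2n\times 2n$ minors of the generic matrix $\vec M_{2,0,2}(G)$.

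For the second bullet, fix an edge $ij\in E$. By \lemref{loopedlamancontract}, there is a loop $v_w$ of $G$ (depending on $ij$) such that $G/ij-v_w$ is looped-$(2,2)$. Since $G$ is looped-Laman its underlying edge graph is simple, so contraction over $ij$ removes exactly one edge and does not create any loops; thus $G/ij$ is literally $(G/ij-v_w)$ with one extra loop, matching the hypothesis of \lemref{genericsliderdirections}. That lemma produces a proper algebraic subset $Z_{ij}$ of the parameter space $\mathbb{R}^{2(m-1)+3c}$ for $G/ij$, off which $\vec S(G/ij,\vec d,\vec n,\vec s)$ has no solution. The parameters for $G/ij$ are inherited from those for $G$ by the coordinate projection $\pi_{ij}\colon\mathbb{R}^{2m+3c}\to\mathbb{R}^{2(m-1)+3c}$ that drops $\vec d_{ij}$, and the preimage $\pi_{ij}^{-1}(Z_{ij})$ is cut out in $\mathbb{R}^{2m+3c}$ by the very same polynomials (now viewed as polynomials that happen not to depend on $\vec d_{ij}$), so it is still a proper algebraic subset.

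Taking the union over all $m$ edges $ij$ together with the rank-drop set from the first step yields a finite union of proper algebraic subsets of $\mathbb{R}^{2m+3c}$, whose complement is open and dense — precisely the claim. I do not anticipate a serious obstacle here: \lemref{loopedlamancontract} furnishes the combinatorial reduction from each contracted graph to the ``one extra loop'' regime, and \lemref{genericsliderdirections} already did the algebraic work of ruling out the added loop's constraint generically. The only minor point to verify is the routine observation that the pullback of a proper algebraic subset under a coordinate projection is still proper algebraic, which is immediate.
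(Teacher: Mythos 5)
Your proposal is correct and follows essentially the same route as the paper: reduce the first bullet to the generic nonvanishing of $\det(\vec M_{2,0,2}(G))$ via \lemref{parallelsliderrank}, use \lemref{loopedlamancontract} to put each contraction $G/ij$ into the ``looped-$(2,2)$ plus one extra loop'' form required by \lemref{genericsliderdirections}, and take the complement of the resulting finite union of proper algebraic sets. Your explicit treatment of the coordinate projection pulling back the bad set for $G/ij$ is a detail the paper glosses over, but it changes nothing substantive.
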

\begin{proof}
Because a looped-Laman graph is also a looped-$(2,2)$ graph, \lemref{202matrix}
and  \lemref{parallelsliderrank} imply that $\det (\vec M_{2,0,2}(G))$ which
is a polynomial in the entries of $\vec d$ and  $\vec n$ is not constantly zero,
and so for any values of $\vec s$, the generic
system $\vec S(G,\vec d,\vec n, \vec s)$
has a unique solution $\hat{\vec p}$ satisfying
\[
\vec M_{2,0,2}(G)\hat{\vec p} = (\vec 0, \vec s)^{\mathsf{T}}
\]
The generic directions and slider lines are the ones in the complement of the zero set
of $\det (\vec M_{2,0,2}(G))$, and the non-generic set has measure zero.

By the combinatorial \lemref{loopedlamancontract}, each edge contraction
$G/ij$ has the combinatorial form required by \lemref{slidergenericity}.  By \lemref{slidergenericity},
for each of $m$ contractions, the set of directions and slider lines
such that the contracted system $\vec S(G/ij,\vec d,\vec n, \vec s)$ is
an algebraic set of measure zero.

The proof follows from the fact that set of directions and slider
lines for which the conclusion fails is the union of a finite number of
measure-zero algebraic sets: $\det (\vec M_{2,0,2}(G))=0$ is one
non-generic set and each application of \lemref{slidergenericity}
gives another algebraic set to avoid.  Since the union of finitely many measure zero algebraic
sets is itself a measure zero algebraic set, the intersection of the complements is non-empty.
\end{proof}

\paragraph{Proof of \theoref{sliderparallel}.}
With all the tools in place, we give the proof
of our direction-slider network realization theorem.
\begin{proof}[Proof of \theoref{sliderparallel}]
If $G$ is not looped-Laman, then by \lemref{22parallel} applied on
a $(2,2)$-tight subgraph, $G$ has no faithful realization.

Now we assume that $G$ is looped-Laman.  Assign generic directions and sliders as in \lemref{slidergenericity}.
By \lemref{parallelsliderrank}, the system $\vec S(G,\vec d,\vec n, \vec s)$
has rank $2n$ and thus a unique solution.  For a contradiction, we suppose that
some edge $ij$ is collapsed.  Then by \lemref{slidercontractreplace} and
\corref{slidercollapsecontract} this system has a non-empty solution space equivalent to the
contracted system $\vec S(G/ij,\vec d,\vec n, \vec s)$.  However, since we picked the directions and sliders as in
\lemref{slidergenericity}, $\vec S(G/ij,\vec d,\vec n, \vec s)$ has no solution, leading to a contradiction.
\end{proof}

\section{Axis-parallel sliders}\seclab{xyparallelslider}
An \emph{axis-parallel direction-slider network} is a direction network in
which each slider is either vertical or horizontal.  The combinatorial
model for axis-parallel direction-slider networks is defined to be a
looped graph in which each loop is colored either red or blue, indicating
slider direction.  A \emph{color-looped-Laman graph} is a looped
graph with colored loops that is looped-Laman, and, in addition, admits a coloring
of its edges into red and blue forests so that each monochromatic tree spans
exactly one loop of its color.
Since the slider directions of an axis-parallel direction-slider network are
given by the combinatorial data, it is formally defined by the tuple
$(G,\vec d,\vec s)$.  The realization problem for axis-parallel direction-slider
networks is simply the specialization of the slider equations to $x_i=s_{i_j}$,
where $\vec p_i=(x_i,y_i)$, for vertical sliders and $y_i=s_{i_j}$ for horizontal
ones.

We prove the following extension to \theoref{sliderparallel}.

\begin{theorem}[\xysliderparallelthm][\textbf{Generic axis-parallel direction-slider network realization}]\theolab{xysliderparallel}
Let $(G,\vec d,\vec s)$ be a generic axis-parallel direction-slider network.  Then
$(G,\vec d,\vec s)$ has a (unique) faithful realization if and only if
$G$ is a color looped-Laman graph.
\end{theorem}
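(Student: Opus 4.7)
The plan is to follow the roadmap of \secref{sliderparallel} almost verbatim, the single novelty being that the slider normals are no longer generic vectors but are fixed basis vectors determined by the color of each loop: $(1,0)$ for the vertical (say, red) sliders and $(0,1)$ for the horizontal (blue) ones. The realization problem is again a linear system whose matrix has the same shape as $\vec M_{2,0,2}(G)$, except that a red-loop row has its unique non-zero entry in the $x$-column of its vertex and a blue-loop row has its unique non-zero entry in the $y$-column.

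The essential new ingredient is an axis-parallel analogue of \lemref{202matrix}: this specialized matrix achieves generic rank $2n$ if and only if $G$ is color-looped-Laman. I would prove this by the same generalized Laplace expansion along the $x$-columns used in the proof of \lemref{202matrix}. Each term factors as a product of two $\vec M_{1,0,1}$-type determinants that are evaluated by \lemref{mapdet}. Because red-loop rows vanish on every $y$-column and blue-loop rows vanish on every $x$-column, any non-vanishing term must place every red loop in the $x$-half and every blue loop in the $y$-half; by \lemref{mapdet}, the term is then non-zero precisely when the $x$-half forms a looped forest whose loops are all red and the $y$-half forms a looped forest whose loops are all blue. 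The surviving monomials cannot generically cancel, so the generic rank is $2n$ iff $G$ decomposes edge-disjointly into a red looped forest and a blue looped forest, which is exactly the defining condition of color-looped-Laman.

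With this representation result in place, the remainder of the proof transcribes \secref{sliderparallel}. Contraction over an edge preserves loop colors, yielding analogues of \lemref{slidercontractreplace} and \corref{slidercollapsecontract}. An analogue of \lemref{genericsliderdirections} also goes through: if a color-looped-$(2,2)$ graph is augmented by one extra loop, the unique solution $\hat{\vec p}$ of the original system cannot satisfy an additional generic equation $x_i=s_{i_j}$ or $y_i=s_{i_j}$, because the offset $s_{i_j}$ is a fresh generic parameter independent of the other data. Combining these ingredients gives the axis-parallel analogue of \lemref{slidergenericity}, after which the contradiction argument from the proof of \theoref{sliderparallel} applies without change.

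The main obstacle is the \emph{color-aware contraction lemma}, the analogue of \lemref{loopedlamancontract}: one must show that for every edge $ij$ of a color-looped-Laman graph there is a loop $v_w$ \emph{of a specified color} whose removal makes $G/ij$ color-looped-$(2,2)$. The uncolored argument needed to rebalance only a single $(2,0)$-count; here the red and blue looped-forest sub-counts must be tracked separately, and one has to check that any over-dense subgraph appearing after the contraction is over-dense in one specific color. I expect this to follow from a matroid-union argument over the two colored looped-forest matroids, in the spirit of \cite{colors,graded}; the technical content is the color bookkeeping through the contraction, after which the rest of the proof assembles mechanically.
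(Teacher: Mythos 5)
Your proposal matches the paper's proof: it specializes \lemref{202matrix} to the colored matrix $\vec M^{\mathsf{c}}_{2,0,2}(G)$ via the same Laplace-expansion/looped-forest-decomposition argument (the paper's \lemref{202coloredmatrix}), specializes the genericity statement \lemref{slidergenericity} accordingly, and then reruns the proof of \theoref{sliderparallel} essentially verbatim. The ``color-aware contraction lemma'' you single out as the main obstacle --- that for every edge $ij$ of a color-looped-Laman graph some loop of a \emph{specified color} can be deleted from $G/ij$ to leave a color-looped-$(2,2)$ graph --- is indeed the one genuinely colored combinatorial ingredient; the paper leaves it implicit in the claim that the earlier proof ``goes through word for word,'' so your observation that it requires a separate (matroid-union style) argument is a fair point about the published proof rather than a gap in your own.
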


The proof of \theoref{xysliderparallel} is a specialization of the arguments in the
previous section to the axis-parallel setting.  The modifications we need to make are:
\begin{itemize}
\item Specialize the $(2,0,2)$-matroid realization \lemref{202matrix} to the case
where in each row coresponding to a slider $i_j\in E$ one of $c_{i_j}$ and $d_{i_j}$
is zero and the other is one.  This corresponds to the slider direction equations in
the realization system for an axis-parallel direction-slider network.
\item Specialize the genericity statement \lemref{slidergenericity}
\end{itemize}
Otherwise the proof of \theoref{sliderparallel} goes through word for word.  The rest of the section
gives the detailed definitions and describes the changes to the two key lemmas.

\paragraph{Color-looped-$(2,2)$ and color-looped-Laman graphs.}
A \emph{color-looped-(2,2) graph} is a looped graph with colored loops
that is looped-$(2,2)$, in addition, admits a coloring of its edges
into two forests so that each monochromatic tree spans exactly one loop of its color.

A \emph{color-looped-Laman graph} is a looped
graph with colored loops that is looped-Laman, and, in addition, admits a coloring
of its edges into red and blue forests so that each monochromatic tree spans
exactly one loop of its color.

\figref{color-looped-examples} shows examples.  The difference between these definitions and
the ones of looped-$(2,2)$ and looped-Laman graphs is that they are defined in terms of \emph{both}
graded sparsity counts \emph{and} a specific decomposition of the edges, depending on the colors of the
loops.
\begin{figure}[htbp]
\centering

\subfigure[]{\includegraphics[width=0.45\textwidth]{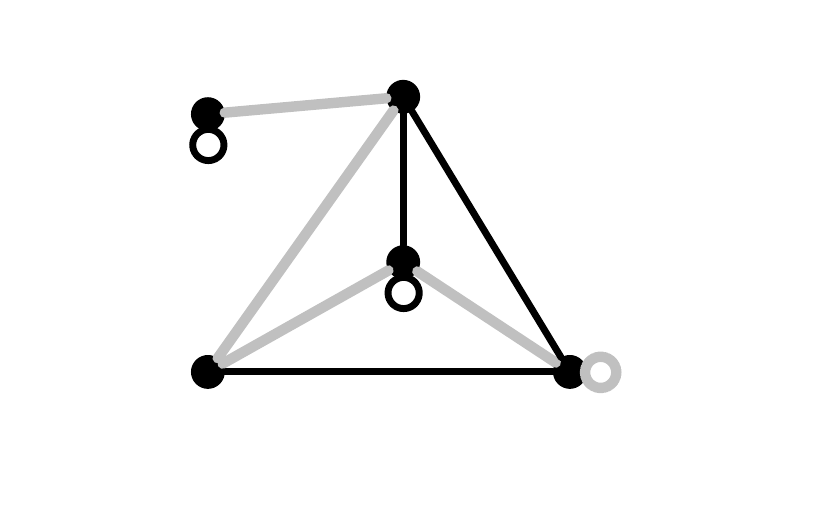}}
\subfigure[]{\includegraphics[width=0.45\textwidth]{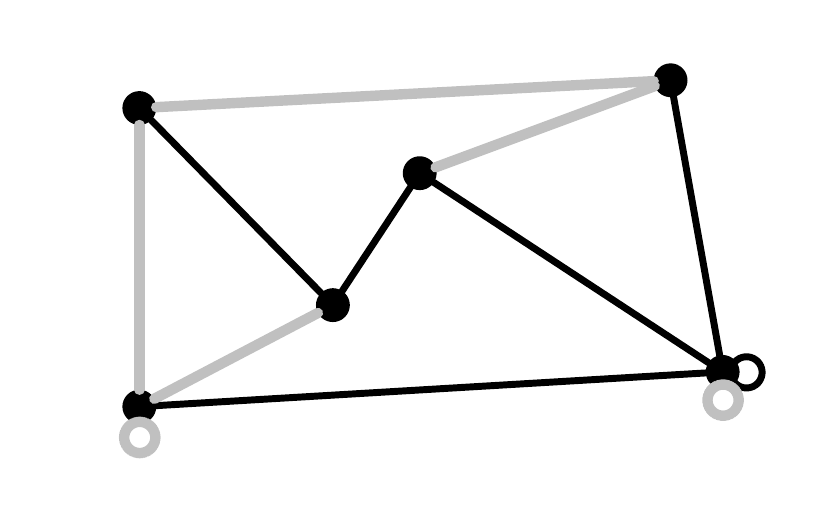}}
\caption{Examples of color-looped graphs, shown with forests certifying the color-looped property:
(a) a color-looped $(2,2)$-graph; (b) a color-looped Laman graph.  The colors red and blue
are represented by gray and black respectively.}
\label{fig:color-looped-examples}
\end{figure}

\paragraph{Realizing the $(2,0,2)$-graded-sparsity matroid for color-looped graphs.}
Recall that the matrix $\vec M_{2,0,2}(G)$  (see \figref{m22}(b)) realizing the $(2,0,2)$-sparsity
matroid has a row for each slider loop $i_j\in E$ with generic entries $c_{i_j}$ and $d_{i_j}$
in the two columns associated with vertex $i$.  For the color-looped case, we specialize
to the matrix $\vec M^{\mathsf{c}}_{2,0,2}(G)$, which has the same pattern as $\vec M_{2,0,2}(G)$, except:
\begin{itemize}
\item $c_{i_j}=1$ and $d_{i_j}=0$ for red loops $i_j\in E$
\item $c_{i_j}=0$ and $d_{i_j}=1$ for blue loops $i_j\in E$
\end{itemize}

The extension of the realization \lemref{202matrix}
to this case is the following.

\begin{lemma}\lemlab{202coloredmatrix}
Let $G$ be a color-looped graph on $n$ vertices with $m+c=2n$.
The matrix $\vec M^{\mathsf{c}}_{2,0,2}(G)$ has generic rank $2n$ if and only if $G$ is color-looped-$(2,2)$.
\end{lemma}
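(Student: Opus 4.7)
The plan is to adapt the Laplace-expansion argument from \lemref{202matrix} to the color-specialized setting, where the specialization restricts which rows can contribute to each half of the expansion.

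First I would observe that $\vec M^{\mathsf{c}}_{2,0,2}(G)$ is a square $2n\times 2n$ matrix, so it suffices to show that its determinant, viewed as a polynomial in the generic variables $a_{ij},b_{ij}$ arising from the edge rows, is not identically zero precisely when $G$ is color-looped-$(2,2)$. As in the proof of \lemref{202matrix}, apply the generalized Laplace expansion partitioning the columns into the $n$ ``$a$-columns'' (first coordinate of each vertex) and the $n$ ``$b$-columns'' (second coordinate). Each term in the expansion is indexed by a choice of $n$ rows $X$ sent to the $a$-columns, with the remaining $n$ rows going to the $b$-columns, and the summand has the form $\pm\det\left(\vec M^{\mathsf{c}}_{2,0,2}(G)[X,\text{$a$-cols}]\right)\cdot\det\left(\vec M^{\mathsf{c}}_{2,0,2}(G)[\bar X,\text{$b$-cols}]\right)$.

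The key new observation, driven by the color specialization, is that a red loop row has a $1$ only in the $a$-column of its vertex and zeros elsewhere, so if it is placed in $\bar X$ its restriction to the $b$-columns is identically zero and the term vanishes; symmetrically, blue loops must appear in $\bar X$. Thus every non-zero summand corresponds to a partition of the edges of $G$ into two sets $E_a\sqcup E_b$ such that the subgraph $G_a$ consisting of $E_a$ together with all red loops sits inside the $a$-block, and the subgraph $G_b$ consisting of $E_b$ together with all blue loops sits inside the $b$-block. Each of the two sub-determinants has exactly the form $\vec M_{1,0,1}$ (with the loop entries specialized to $1$), so by \lemref{mapdet} both factors are non-zero as polynomials iff $G_a$ and $G_b$ are each looped forests --- that is, each connected component spans exactly one loop of its color.

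Each surviving term is a monomial $\pm\prod_{ij\in E_a}a_{ij}\cdot\prod_{ij\in E_b}b_{ij}$, and distinct partitions $(E_a,E_b)$ produce distinct multilinear monomials in the generic variables, so no cancellation can occur. Hence the generic rank of $\vec M^{\mathsf{c}}_{2,0,2}(G)$ equals $2n$ iff $G$ admits a decomposition of its edges into two forests such that each monochromatic tree spans exactly one loop of its color, which is precisely the definition of a color-looped-$(2,2)$ graph. The forward direction (if the rank is $2n$ then $G$ must be looped-$(2,2)$) additionally follows from \lemref{202matrix} since $\vec M^{\mathsf{c}}_{2,0,2}(G)$ is a specialization of $\vec M_{2,0,2}(G)$ and thus has no larger rank.

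The main obstacle is the bookkeeping in the expansion --- verifying carefully that the color constraint forces the correct side for each loop and that the surviving terms are in bijection with the admissible color-respecting two-forest decompositions. The non-cancellation step is then immediate because the edge variables in the two halves are disjoint ($a_{ij}$ vs.\ $b_{ij}$), and the loop specializations merely contribute signs and factors of $1$.
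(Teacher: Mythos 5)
Your proposal is correct and follows essentially the same route as the paper, which simply says to modify the Laplace-expansion proof of \lemref{202matrix} so that only color-respecting looped-forest decompositions survive; your observation that the specialization $c_{i_j},d_{i_j}\in\{0,1\}$ forces each red loop row into the $a$-block and each blue loop row into the $b$-block is exactly the needed modification. The non-cancellation argument via disjoint multilinear monomials $\prod a_{ij}\cdot\prod b_{ij}$ also matches the uncolored case, so nothing is missing.
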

\begin{proof}
Modify the proof of \lemref{202matrix} to consider only decompositions into looped
forests in which each loop is assigned its correct color.  The definition of color-looped-$(2,2)$
graphs implies that one exists if and only if $G$ is color-looped-$(2,2)$.  As in the
uncolored case, the determinant is generically non-zero exactly when the required decomposition exists.
\end{proof}

\paragraph{Genericity for axis-parallel sliders.}
In the axis-parallel setting, our genericity condition is the following.
\begin{lemma}\lemlab{xyslidergenericity}
Let $G$ be a color-looped-Laman subgraph.  The set of directions and slider lines such that:
\begin{itemize}
\item The system $\vec S(G,\vec d,\vec n, \vec s)$ has rank $2n$ (and thus has a unique solution)
\item For all edges $ij\in E$, the system $\vec S(G/ij,\vec d,\vec n, \vec s)$ has no solution
\end{itemize}
is open and dense in $\mathbb{R}^{2m+3c}$.
\end{lemma}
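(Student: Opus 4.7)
The plan is to follow the proof of \lemref{slidergenericity} almost verbatim, with the two substitutions the paper has already flagged: the colored representation matrix $\vec M^{\mathsf{c}}_{2,0,2}(G)$ replaces $\vec M_{2,0,2}(G)$, and \lemref{202coloredmatrix} replaces \lemref{202matrix}. Everything else in the argument is color-agnostic, so once this lemma is in hand the proof of \theoref{xysliderparallel} transfers unchanged.

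First I would handle the rank condition on $\vec S(G,\vec d,\vec n,\vec s)$. Since $G$ is color-looped-Laman it is in particular color-looped-$(2,2)$, and \lemref{202coloredmatrix} then guarantees that $\det\bigl(\vec M^{\mathsf{c}}_{2,0,2}(G)\bigr)$ is a non-zero polynomial in the entries of $\vec d$. On the open dense complement of its vanishing locus the matrix equation $\vec M^{\mathsf{c}}_{2,0,2}(G)\vec p = (\vec 0,\vec s)^{\mathsf T}$ has a unique solution $\hat{\vec p}$ whose coordinates are rational functions of the entries of $\vec d$ and $\vec s$.

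Second, for each edge $ij\in E$ I would show that the contracted system $\vec S(G/ij,\vec d,\vec n,\vec s)$ is generically inconsistent. The graph $G/ij$ has $n-1$ vertices and $(m-1)+c = 2(n-1)+1$ edges-plus-loops, so it is overdetermined by one. A color-respecting version of \lemref{loopedlamancontract}, obtained by tracking the two monochromatic forests through the contraction, yields a colored loop $v_w$ so that $H := G/ij - v_w$ is color-looped-$(2,2)$. Applying the first step to $H$ gives a unique solution $\hat{\vec p}^H$ to $\vec S(H,\vec d,\vec n,\vec s)$; the discarded loop $v_w$ then imposes one further linear equation of the form $x_v=s_{v_w}$ or $y_v=s_{v_w}$. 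Because $\hat{\vec p}^H_v$ is a rational function of parameters that do not include $s_{v_w}$, this equation cuts out a proper algebraic subset of $\mathbb{R}^{2m+3c}$---the axis-parallel analogue of \lemref{genericsliderdirections}.

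The set where the conclusion fails is then the union of the rank-dropping set for $\vec S(G,\vec d,\vec n,\vec s)$ and one consistency set for each of the $m$ contracted systems, a finite union of proper algebraic subsets of $\mathbb{R}^{2m+3c}$, so its complement is open and dense. The main obstacle I anticipate is the color-respecting version of \lemref{loopedlamancontract}: verifying that the loop whose removal restores looped-$(2,2)$ can be chosen compatibly with the forest decomposition required by the definition of color-looped-$(2,2)$. This should reduce to a matroidal exchange within the union of the two color-class map-graph matroids, analogous to the uncolored counting argument given in the proof of \lemref{loopedlamancontract}.
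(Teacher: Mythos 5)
Your proposal takes the same route as the paper, whose entire proof is ``similar to the proof of \lemref{slidergenericity}, except using \lemref{202coloredmatrix}'' --- i.e.\ exactly the two substitutions ($\vec M^{\mathsf{c}}_{2,0,2}(G)$ for $\vec M_{2,0,2}(G)$, and \lemref{202coloredmatrix} for \lemref{202matrix}) that you make. You in fact supply more detail than the paper does: the color-respecting analogues of \lemref{loopedlamancontract} and \lemref{genericsliderdirections} that you flag as the remaining obstacle are genuinely needed (the specialized $0/1$ loop rows yield full rank only when $G/ij-v_w$ is color-looped-$(2,2)$, not merely looped-$(2,2)$), and the paper leaves that point entirely implicit.
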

\begin{proof}
Similar to the proof of \lemref{slidergenericity}, except  using \lemref{202coloredmatrix}.
\end{proof}

\section{Generic rigidity via direction network realization}\seclab{rigidity}
Having proven our main results on direction and direction-slider network realization,
we change our focus to the rigidity theory of bar-joint and bar-slider frameworks.

\subsection{Bar-joint rigidity}

In this section, we prove the Maxwell-Laman Theorem, following Whiteley \cite{Whiteley:1989p992}:
\laman

The difficult step of the proof is to show that a generic bar-joint framework $G(\vec p)$ with $m=2n-3$ edges is \emph{infinitesimally rigid}, that is the generic rank of the rigidity matrix $\vec M_{2,3}(G)$, shown in
\figref{rigidity-matrices}(a) has rank $2n-3$ if and only if $G$ is a Laman graph.
We will deduce this as a consequence of \theoref{parallel}.

\begin{figure}[htbp]
\centering

\subfigure[]{\includegraphics[width=0.48\textwidth]{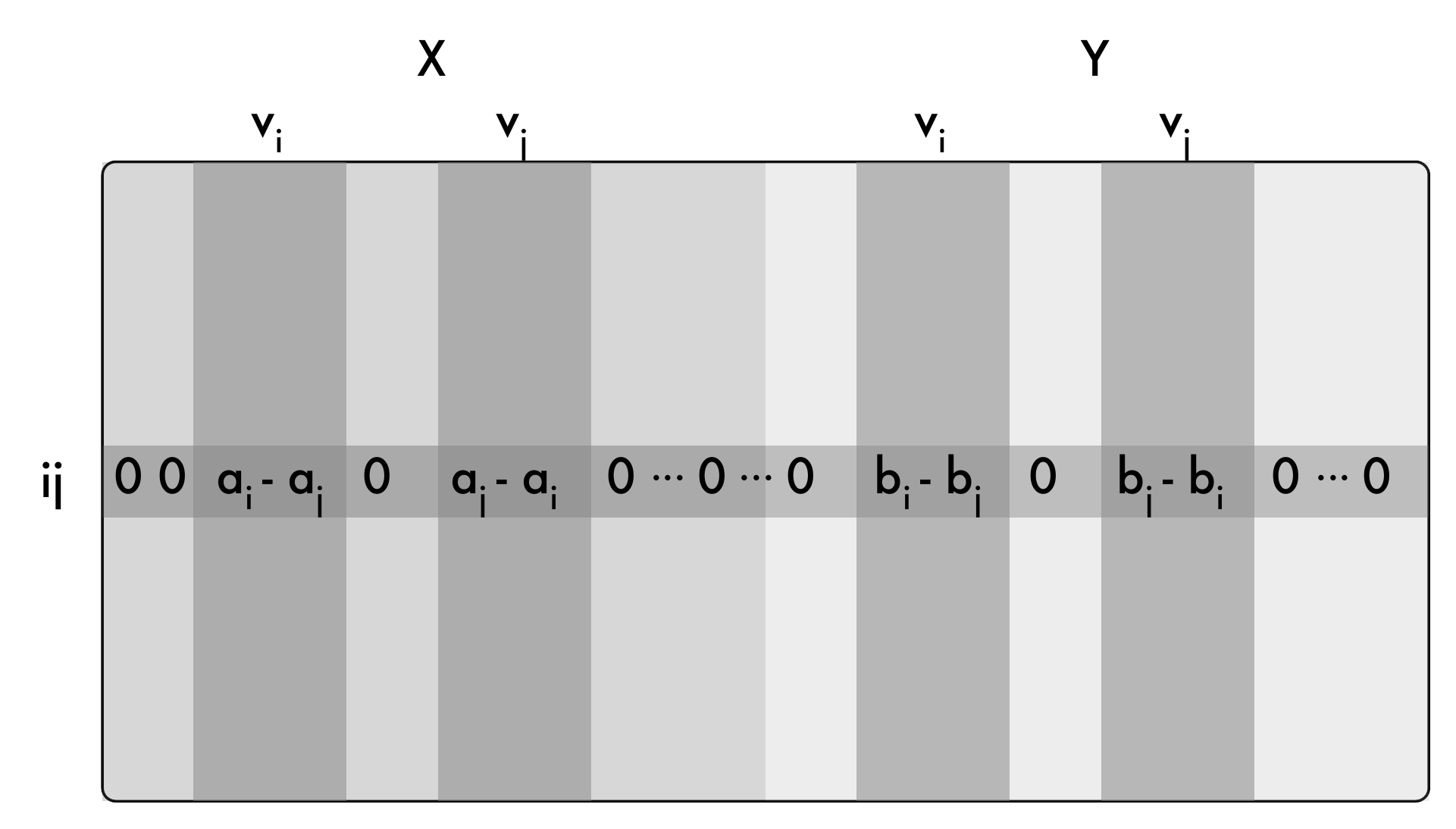}}\vspace{-0.2 in}
\subfigure[]{\includegraphics[width=0.48\textwidth]{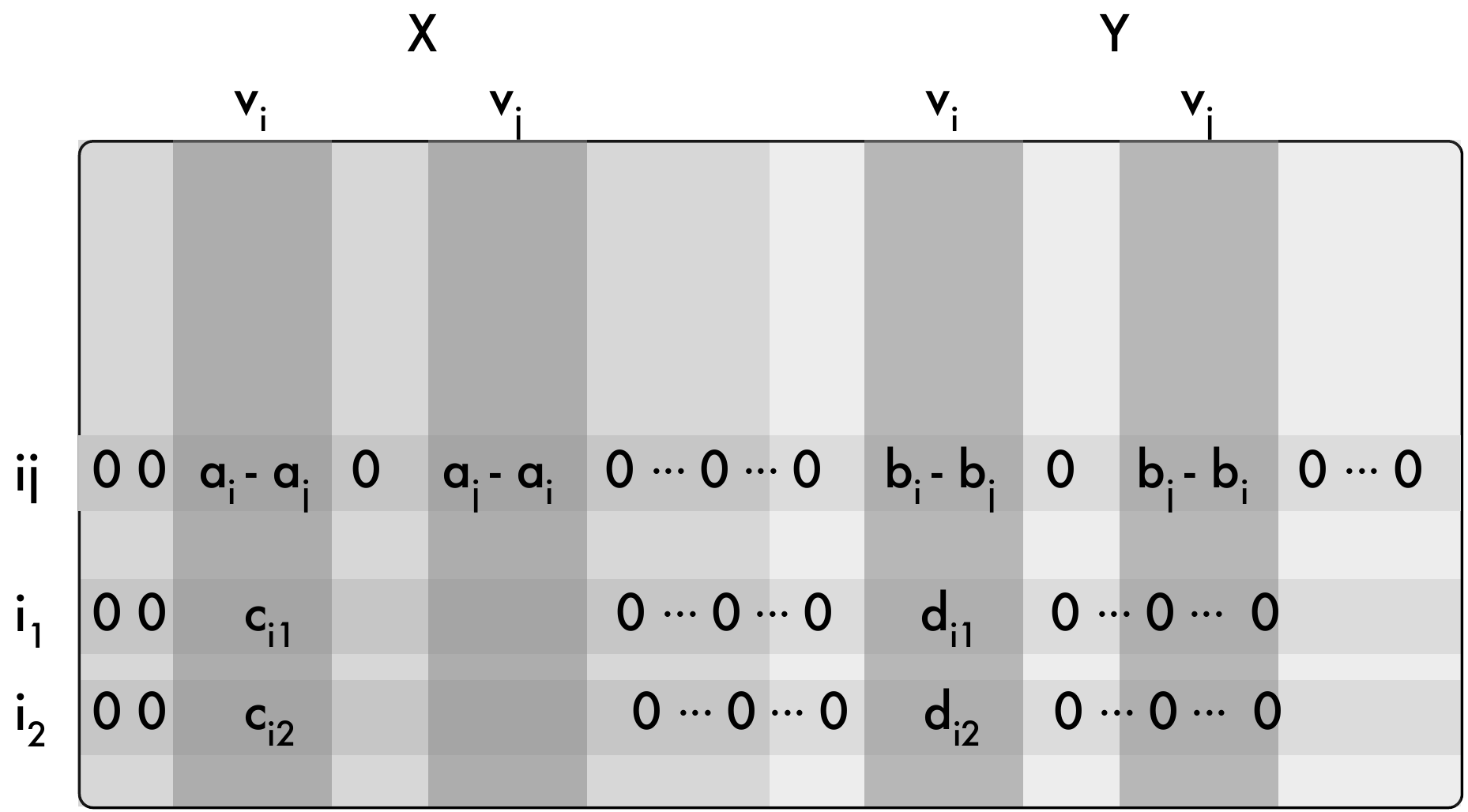}}
\caption{The pattern of the rigidity matrices: (a) the matrix $\vec M_{2,3}(G)$ for
bar-joint rigidity; (b) the matrix $\vec M_{2,0,3}(G)$ for bar-slider framework.}
\label{fig:rigidity-matrices}
\end{figure}

\paragraph{Proof of the Maxwell-Laman Theorem \ref{theo:laman}.}
\begin{proof}[Proof of the Maxwell-Laman Theorem \ref{theo:laman}]
Let $G$ be a Laman graph.  We need to show that the rank of the rigidity matrix
$\vec M_{2,3}(G)$ is $2n-3$ for a generic framework $G(\vec p)$.  We will do this
by constructing a point set $\hat{\vec p}$
for which the rigidity matrix has full rank.

Define a generic direction network $(G,\vec d)$ with its underlying graph $G$.  Because $\vec d$
is generic, the rank of $\vec M_{2,2}(G)$ is $2n-3$ for these directions $d_{ij}$, by \lemref{22matrix}.

By \theoref{parallel},
there is a point set $\hat{\vec p}$ such that $\hat{\vec p}_i\neq \hat{\vec p}_j$ for all $ij\in E$ and
$\hat{\vec p}_i-\hat{\vec p}_j = \alpha_{ij} \vec d_{ij}$ for some non-zero real number $\alpha_{ij}$.  Replacing $a_{ij}$ by $\alpha_{ij}(a_i-a_j)$
and $b_{ij}$ by $\alpha_{ij}(b_i-b_j)$ in $\vec M_{2,2}(G)$ and scaling
each row $1/\alpha_{ij}$ we obtain the rigidity matrix $\vec M_{2,3}(G)$.  It follows that
$\vec M_{2,3}(G)$ has rank $2n-3$ as desired.
\end{proof}

\paragraph{Remarks on Tay's proof of the Maxwell-Laman Theorem \cite{Tay93}.}  In \cite{Tay93}, Tay gives a
proof of the Maxwell-Laman Theorem based on so-called \emph{proper $3\mathsf{T}2$ decompositions} of Laman graphs (see \cite{colors} for a
detailed discussion).  The key idea is to work with what Tay calls a ``generalized framework'' that may have collapsed edges;
in the generalized rigidity matrix Tay defines, collapsed edges are simply assigned directions.  Tay then starts with a generalized framework
in which all edges are collapsed for which it is easy to prove the generalized rigidity matrix has full rank and then uses a
$3\mathsf{T}2$ decomposition to explicitly
perturb the vertices so that the rank of the generalized rigidity matrix is maintained as the endpoints
of collapsed edges are pulled apart.  At the end of the process, the generalized rigidity matrix coincides with the
Laman rigidity matrix.

In light of our genericity \lemref{parallelgeneric}, we can simplify Tay's approach.  Let $G$ be a Laman graph,
$\mathcal{D}\subset \mathbb{R}^{2m}$ is the set of directions for which $\vec M_{2,2}(G)$ has full rank,
and $\mathcal{P}\subset \mathcal{D}$ as
\(
\mathcal{P} = \{ \vec d\in \mathcal{D} : \exists \vec p\in \mathbb{R}^{2n}\, \forall ij\in E\, \vec d_{ij}=\vec p_i-\vec p_j\}
\);
i.e., $\mathcal{P}$ is the subset of $\mathcal{D}$ arising from the difference set of some planar point set.  From the
definition of $\mathcal{P}$ and arguments above, if $\vec d\in \mathcal{P}$ any realization of $(G,\vec d)$ interpreted
as a framework will be infinitesimally rigid.

\lemref{parallelgeneric} says that $\mathcal{P}$ is dense in $\mathcal{D}$ (and indeed $\mathbb{R}^{2m}$)
if and only if $G$ is a Laman graph.  In the language
of Tay's generalized frameworks, then, \lemref{parallelgeneric} gives a short,
existential proof that a full rank generalized framework can be perturbed into an infinitesimally rigid framework without
direct reference to \theoref{parallel}.  By making the connection to \theoref{parallel} explicit, we obtain
a canonical infinitesimally rigid realization that can be found using only linear algebra.

\subsection{Slider-pinning rigidity}
In this section we develop the theory of slider pinning rigidity and prove a Laman-type theorem for it.
\slider

We begin with the formal definition of the problem.
\paragraph{The slider-pinning problem.}
An abstract \emph{bar-slider framework} is a triple $(G,\bm{\ell},\bm{s})$ where $G=(V,E)$ is a
graph with $n$ vertices, $m$ edges and $c$ self-loops.  The vector $\bm{\ell}$ is a vector
of $m$ positive squared edge-lengths, which we index by the edges $E$ of $G$.  The vector $\bm{s}$ specifies a line
in the Euclidean plane for each self-loop in $G$, which we index as $i_j$ for the $j$th loop at vertex $i$;
lines are given by a normal vector $\vec n_{i_j}=(c_{i_j},d_{i_j})$ and a constant $e_{i_j}$.

A \emph{realization} $G(\vec p)$
is a mapping of the vertices of $G$ onto a point set $\vec p\in \left(\mathbb{R}^2\right)^n$
such that:
\begin{eqnarray}
||\vec p_i-\vec p_j||^2=\bm{\ell}_{ij} & \text{for all edges $ij\in E$} \eqlab{sledges}\\
\iprod{\vec p_i}{\vec n_{i_j}}=e_{i_j} & \text{for all self-loops $i_j\in E$}\eqlab{loops}
\end{eqnarray}
In other words, $\vec p$ respects all the edge lengths and assigns every
self-loop on a vertex to a point on the line specified by the corresponding slider.

\paragraph{Continuous slider-pinning.} \seclab{continuous}
The \emph{configuration space } $\mathcal{C}(G)\subset \left(\mathbb{R}^2\right)^n$ of a
bar-slider framework is defined as the space of real solutions to equations \eqref{sledges} and \eqref{loops}:
\[
\mathcal{C}(G) = \{\vec p\in \left(\mathbb{R}^2\right)^n : \text{$G(\vec p)$ is a realization of
$(G,\bm{\ell},\bm{s})$} \}
\]

A bar-slider framework $G(\vec p)$ is \emph{slider-pinning rigid} (shortly, \emph{pinned}) if $\vec p$ is an
isolated point in the configuration space $\mathcal{C}(G)$ and \emph{flexible} otherwise.
It is minimally pinned if it is pinned but fails to remain so if any edge or loop is removed.

\paragraph{Infinitesimal slider-pinning.}
Pinning-rigidity is a difficult condition to
establish algorithmically,
so we consider instead the following linearization of the problem.
Let $G(\vec p)$ be an axis-parallel bar-slider framework with $m$ edges and $c$
sliders.  The \emph{pinned rigidity matrix} (shortly rigidity matrix)
$\vec M_{2,0,3}(G(\vec p))$ is an $(m+c)\times 2n$ matrix that has one row for each edge
$ij\in E$ and self-loop $i_j\in E$, and one column for each vertex of $G$.  The columns
are indexed by the coordinate and the vertex, and we think of them as arranged into two blocks of $n$,
one for each coordinate.  The rows corresponding to edges have entries $a_i-a_j$ and $b_i-b_j$ for the
$x$- and $y$-coordinate columns of vertex $i$, respectively.  The $x$- and $y$-coordinate columns associated with
vertex $j$ contain the entries $a_j-a_i$ and $b_j-b_i$; all other entries are zero.  The row for a loop $i_j$ contains
entries $c_{i_j}$ and $d_{i_j}$ in the $x$- and $y$-coordinate columns for vertex $i$; all other entries are zero.
\figref{rigidity-matrices}(b) shows the pattern.

If $\vec M(G(\vec p))$ has rank $2n$ (the maximum possible), we say that $G(\vec p)$ is \emph{infinitesimally slider-pinning rigid} (shortly infinitesimally pinned); otherwise it is \emph{infinitesimally flexible}.  If $G(\vec p)$
is infinitesimally pinned but fails to be so after removing any edge or loop from $G$, then it is \emph{minimally infinitesimally pinned}.

The pinned rigidity matrix arises as the differential of the system given by \eqref{edges} and \eqref{loops}.
Its rows span the normal space of $\mathcal{C}$ at $\vec p$ and the kernel is the
tangent space $T_{\vec p} \mathcal{C}(G)$ at $\vec p$.  With this observation, we can show that
infinitesimal pinning implies pinning.

\begin{lemma}\lemlab{infinitesimal-pinning}
Let $G(\vec p)$ be a bar-slider framework.  If $G(\vec p)$ is infinitesimally pinned, then $G(\vec p)$
is pinned.
\end{lemma}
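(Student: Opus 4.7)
The plan is to read the rigidity matrix $\vec M_{2,0,3}(G(\vec p))$ as (a row-rescaling of) the Jacobian of the defining equations of $\mathcal{C}(G)$, and then apply the inverse function theorem. Let
\[
F:(\Re^2)^n\to \Re^{m+c},\qquad F(\vec q) = \bigl(\, \|\vec q_i-\vec q_j\|^2-\bm{\ell}_{ij}\,;\; \iprod{\vec q_i}{\vec n_{i_j}}-e_{i_j}\,\bigr)_{ij\in E,\, i_j\in E}
\]
be the polynomial map whose real zero set is exactly $\mathcal{C}(G)$. A direct differentiation shows that the Jacobian $DF(\vec p)$ agrees with $\vec M_{2,0,3}(G(\vec p))$ except that each edge row is multiplied by $2$; this scalar factor does not change the rank, so $\operatorname{rank} DF(\vec p) = 2n$ by hypothesis.

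Next I would select a subset $R$ of $2n$ rows of $\vec M_{2,0,3}(G(\vec p))$ that are linearly independent, which exists because the matrix has full column rank $2n$. Let $F_R : (\Re^2)^n \to \Re^{2n}$ denote the restriction of $F$ to the coordinates indexed by $R$; then $F_R$ is a smooth map between spaces of the same dimension $2n$ whose differential at $\vec p$ is the $2n\times 2n$ non-singular matrix $DF(\vec p)[R,\cdot]$. The inverse function theorem therefore gives an open neighborhood $U\subset (\Re^2)^n$ of $\vec p$ on which $F_R$ is a diffeomorphism onto its image; in particular, $\vec p$ is the unique point of $U$ with $F_R(\vec q)=F_R(\vec p)$.

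Since any realization in $\mathcal{C}(G)$ satisfies $F(\vec q)=0$, and in particular $F_R(\vec q)=0=F_R(\vec p)$, the set $\mathcal{C}(G)\cap U$ is contained in $\{\vec q\in U : F_R(\vec q)=F_R(\vec p)\}=\{\vec p\}$. Hence $\vec p$ is isolated in $\mathcal{C}(G)$, i.e., $G(\vec p)$ is slider-pinning rigid.

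No step presents a genuine obstacle; the only care needed is to note that we work with a subsystem of $2n$ equations rather than all $m+c$ of them, which avoids having to analyze the local structure of $\mathcal{C}(G)$ itself. The argument uses only the rank hypothesis on $\vec M_{2,0,3}(G(\vec p))$: because the kernel of $DF(\vec p)$ — the Zariski tangent space to $\mathcal{C}(G)$ at $\vec p$ — is already trivial, no transversality or smoothness assumption on $\mathcal{C}(G)$ is required.
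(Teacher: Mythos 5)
Your proof is correct, and it takes a genuinely different route from the paper's. The paper argues over $\mathbb{C}$: it identifies the kernel of $\vec M_{2,0,3}(G(\vec p))$ with the Zariski tangent space $T_{\vec p}\mathcal{C}_{\mathbb{C}}(G)$, invokes the algebraic-geometry fact that every irreducible component of $\mathcal{C}_{\mathbb{C}}(G)$ through $\vec p$ has dimension at most $\dim T_{\vec p}\mathcal{C}_{\mathbb{C}}(G)=0$, and concludes that $\vec p$ is isolated in the complex configuration space, hence in the real one by inclusion. You instead stay entirely in the real, differentiable category: you extract a $2n\times 2n$ nonsingular subsystem $F_R$ from the (row-rescaled) Jacobian and apply the inverse function theorem, so that local injectivity of $F_R$ forces $\mathcal{C}(G)\cap U\subseteq F_R^{-1}(0)\cap U=\{\vec p\}$. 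Your observation that the full system need not be analyzed --- only a square independent subsystem --- is exactly what makes the elementary argument go through, and the factor of $2$ in the edge rows is correctly handled. What each approach buys: yours is self-contained and avoids any appeal to algebraic geometry; the paper's yields the formally stronger conclusion that $\vec p$ is isolated even among \emph{complex} realizations, which is the kind of statement that meshes with the genericity framework (defined by polynomial non-vanishing over $\mathbb{R}$ or $\mathbb{C}$) used elsewhere in the paper. For the lemma as stated, either suffices.
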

In the proof, we will need the \emph{complex configuration space} $\mathcal{C}_{\mathbb{C}}(G)$ of
$G$, which is the solution space to the system \eqref{edges} and \eqref{loops} in $\left(\mathbb{C}^2\right)^n$.
The rigidity matrix has the same form in this setting.
\begin{proof}
Since $\vec M(G(\vec p))$ has $2n$ columns, if its rank is $2n$, then its kernel is the just the zero vector.
By the observation above, this implies that the tangent space $T_{\vec p} \mathcal{C}_{\mathbb{C}}(G)$ is
zero-dimensional.  A fundamental result result of algebraic geometry \cite[p. 479, Theorem 8]{cox:little:oshea:iva:1997}
says that the irreducible components
of $\mathcal{C}_{\mathbb{C}}(G)$ through $\vec p$ have dimension bounded by the dimension of the tangent space at
$\vec p$.

It follows that $\vec p$ is an isolated point
in the complex configuration space and, by inclusion, in the real configuration
space.
\end{proof}

\subsection{Generic bar-slider frameworks}\seclab{genericity}
Although \lemref{infinitesimal-pinning} shows that infinitesimal pinning implies
pinning, the converse is not, in general, true.  For example, a bar-slider framework that is combinatorially
a triangle with one loop on each vertex is pinned, but not infinitesimally pinned, in a realization where
the sliders are tangent to the circumcircle.

For \emph{generic}
bar-slider frameworks, however, pinning and infinitesimal pinning coincide.
A realization $G(\vec p)$ bar-slider framework  is generic if the rigidity matrix attains its
maximum rank at $\vec p$; i.e.,
$\operatorname{rank}\left(\vec M(\vec p)\right)\ge \operatorname{rank}\left(\vec M(\vec p)\right)$
for all $\vec q\in \mathbb{R}^{2n}$.

We reformulate genericity in terms of the \emph{generic pinned rigidity matrix}
$\vec M(G)$, which is defined to have the same pattern as the pinned rigidity matrix, but with
entries that are formal polynomials in variables $a_i$, $b_i$, $c_{i_j}$, and $d_{i_j}$.
The rank of the generic rigidity matrix is defined as the largest integer $r$ for which there is
an $r\times r$ minor of $\vec M(G)$ which is not \emph{identically zero} as a formal polynomial.

A \emph{graph} $G$ is defined to be \emph{generically infinitesimally rigid}
if its generic rigidity matrix
$\vec M(G)$ has rank $2n$ (the maximum possible).

\subsection{Proof of \theoref{slider}}
We are now ready to give the proof of our Laman-type \theoref{slider} for bar-slider frameworks.
\slider

\begin{proof}
Let $G$ be looped-Laman.  We will construct a point set $\hat{\vec p}$, such that the
bar-slider framework $G(\hat{\vec p})$ is infinitesimally pinned.

Fix a generic direction-slider network $(G,\vec d,\vec n,\vec s)$ with underlying graph $G$.
By \lemref{202matrix}, $\vec M_{2,0,2}(G)$ has rank $2n$.
Applying \theoref{sliderparallel}, we obtain a point set $\hat{\vec p}$ with $\hat{\vec p}_i\neq \hat{\vec p}_j$
for all edges $ij\in E$ and $\vec p_i-\vec p_j=\alpha_{ij}\vec d_{ij}$.  Substituting in to $\vec M_{2,0,2}(G)$ and
rescaling shows the rank of $\vec M_{2,0,3}(G)$ is $2n$.
\end{proof}

\end{document}